\documentclass[preprint]{elsarticle}
\usepackage{amsmath, amsthm, amsfonts}
\usepackage{url}

\newtheorem{theorem}{\bfseries Theorem}
\newtheorem{lemma}{\bfseries Lemma}
\newtheorem{proposition}{\bfseries Proposition}
\newtheorem{corollary}{\bfseries Corollary}
\theoremstyle{definition}

\newtheorem{remark}{\bfseries Remark}
\newtheorem{assumption}{\bfseries Assumption}

\newtheorem{fact}{\bfseries Fact}
\newtheorem{example}{\bfseries Example}
%
%
%
\DeclareMathOperator{\He}{He}
\DeclareMathOperator{\trace}{Tr}
\DeclareMathOperator{\diag}{Diag}
\DeclareMathOperator{\rank}{rank}

\def\Gcl{G_{\mathrm{\scriptstyle cl}}}
\def\bbC{\mathbb{C}}
\def\bbS{\mathbb{S}}

\begin{document}
\begin{frontmatter}
\title{Reduction of SISO H-infinity Output Feedback Control Problem\tnoteref{t1}}
\tnotetext[t1]{This study was supported by Toyota Riken Specially Promoted Research Program in 2018 (PI: Yoshio Ebihara (Kyoto University)). The first author was supported by JSPS KAKENHI Grant Numbers JP26400203 and JP17H01700.}
\author[1]{Hayato Waki\corref{cor1}}
 \ead{waki@imi.kyushu-u.ac.jp}
\author[2]{Yoshio Ebihara} \ead{ebihara@kuee.kyoto-u.ac.jp}
\author[3]{Noboru Sebe} \ead{sebe@ai.kyutech.ac.jp}
\cortext[cor1]{Corresponding author}
\address[1]{Institute of Mathematics
for Industry, Kyushu University, 744 Motooka, Nishi-ku, Fukuoka 819-0395, Japan}
\address[2]{Department of Electrical Engineering, Kyoto
University, Kyotodaigaku-Katsura, Nishikyo-ku, Kyoto 615-8510, Japan,}
\address[3]{Department of Intelligent and Control Systems,
Kyushu Institute of Technology, 680-4 Kawazu, Iizuka-shi, Fukuoka 820-8502, Japan}
\begin{abstract}
We consider the linear matrix inequality (LMI) problem of $H_\infty$ output feedback control problem for a generalized plant whose control input, measured output, disturbance input, and controlled output are scalar. We provide an explicit form of the optimal value. This form is the unification of some results in the literature of $H_\infty$ performance limitation analysis. To obtain the form of the optimal value, we focus on the non-uniqueness of perpendicular matrices, which appear in the LMI problem. We use the null vectors of invariant zeros associated with the dynamical system for the expression of the perpendicular matrices. This expression enables us to reduce and simplify the LMI problem. 
Our approach uses some well-known fundamental tools, e.g., the Schur complement, Lyapunov equation, Sylvester equation, and matrix completion. We use these techniques for the simplification of the LMI problem. Also, we investigate the structure of dual feasible solutions and reduce the size of the dual. This reduction is called a facial reduction in the literature of convex optimization.   
\end{abstract}
\begin{keyword}
Linear matrix inequality \sep $H_\infty$ control \sep invariant zeros \sep dual problem \sep facial reduction
\MSC[2010]{49K30,90C22,93C05,34K35}
\end{keyword}
\end{frontmatter}


\section{Introduction}
The importance of $H_{\infty}$ control problems in robust control
was first pointed out by Zames \cite{Zames81}.  
To design $H_\infty$ controllers, \cite{Doyle89} proposed an approach via algebraic
Riccati equations and inequalities, which works fine under some assumptions on a given generalized
plant. Subsequently, an approach by using linear matrix inequalities (LMIs) are proposed in \cite{Gahinet94, Iwasaki94, Scherer97, Masubuchi98}.  
Both these approaches enable us to design a controller
that internally stabilizes the closed-loop system and
makes its $H_\infty$ norm lower than a priori given bound.  


In contrast, we derive an explicit form of the infimum $H_\infty$ norm
in $H_\infty$ optimal controller synthesis problems
without any care for controller construction.  
To that end, we focus on the LMI optimization problem associated with the $H_\infty$ optimal controller synthesis. 
Because this minimization is formulated as the infimum, it has no guarantee to have any optimal solutions. In other words, some of the variables in the LMI problem may go to infinity when the objective value approaches its optimal value. Then we may encounter numerical difficulties in such cases. Even if one can construct a controller from a computed solution of the LMI problem, it may be fragile to small changes in the parameters of the controller.

The infimum $H_\infty$ norm is often analytically computed. Such approaches are proposed in the literature of $H_\infty$ performance limitation analysis. For instance, \cite{Chen00} provided the $H_\infty$ performance limitations of sensitivity and complementary sensitivity functions for MIMO linear time-invariant systems. In particular, the Nevanlinna-Pick interpolation was used. After obtaining the infimum, one can compute the desired controller whose $H_\infty$ norm is close to this infimum by applying the existing Riccati, or LMI approaches.


\subsection*{Contribution}
We deal with a generalized plant whose input, output, disturbance and controlled output are scalar. The plant is formulated as follows. 
\begin{align}\label{SISO}
&\left\{
\begin{array}{lll}
\dot{x} &=& Ax + b_1w + b_2u\\
z & = & c_1^Tx + d_{11} w + d_{12}u \\
y &=& c_2^Tx + d_{21} w, 
\end{array}
\right. 
\end{align}
where $A\in\mathbb{R}^{n\times n}$, $b_i, c_i\in\mathbb{R}^{n}$ and $d_{ij}\in\mathbb{R}$. In this paper, we refer to \eqref{SISO} as a generalized plant for SISO $H_\infty$ control problem. 
The contribution of this paper is to provide an explicit form of 
the optimal value for the well-known LMI problem 
in relation to the SISO $H_{\infty}$ output feedback control problem. The main result can be summarized in the following theorem 
that is obtained by combining 
Theorems \ref{thm:case1}, \ref{thm:case2}, \ref{thm:case3} and \ref{thm:case4} in this paper.  
\begin{theorem}\label{thm:main}
 Let $\gamma^*$ be the optimal value of the LMI problem obtained from $H_{\infty}$ output feedback control for \eqref{SISO} by applying the elimination of variable method. The transfer matrix $G(s)$ for \eqref{SISO} is denoted by 
\[
G(s) = \begin{pmatrix}
G_{zw}(s) & G_{zu}(s)\\
G_{yw}(s) & G_{yu}(s)
\end{pmatrix}. 
\]
Moreover, $\lambda_1, \ldots, \lambda_{m_1}$ (resp. $\omega_1, \ldots, \omega_{m_2}$) denote invariant zeros on the imaginary axis via the realization $(A, b_2, c_1^T, d_{12})$ of $G_{zu}$ (resp. $(A^T, c_2, b_1^T, d_{21})$ of $G_{yw}$). We assume that all of $\lambda_1, \ldots, \lambda_{m_1}$, $\omega_1, \ldots, \omega_{m_2}$ and their complex conjugates are not eigenvalues of $A$. 

\begin{enumerate}
\item \label{main1} If $d_{12}\neq 0$ and $d_{21}\neq 0$, then $\gamma^*$ is equal to
\[
\max\left\{
\hat{\gamma}, |G_{zw}(\lambda_j)| \ (j=1, \ldots, m_1), |G_{zw}(\omega_j)| \ (j=1, \ldots, m_2)
\right\},
\]
where $\hat{\gamma}$ is the maximum eigenvalue of a symmetric matrix defined by unstable invariant zeros in $G_{zu}$ and $G_{yw}$ and their associated vectors. (See \eqref{gammacase1} for the definition of the matrix.) 
\item \label{main2} If $d_{12}= 0$ or $d_{21}= 0$, then $\gamma^*$ is equal to 
\[
\max\left\{
\hat{\gamma}, |G_{zw}(\lambda_j)| \ (j=1, \ldots, m_1), |G_{zw}(\omega_j)| \ (j=1, \ldots, m_2), |G_{zw}(\infty)| 
\right\},
\]
where $G_{zw}(\infty)$ is the value of the transfer function $G_{zw}$ at infinity. 
\end{enumerate}
Here $|G_{zw}(\lambda_j)|$ (resp. $|G_{zw}(\omega_j)|$) is vanished from the above expressions of $\gamma^*$ if the realization of $G_{zu}$ (resp. $G_{yw}$) has no invariant zeros on the imaginary axis. 
\end{theorem}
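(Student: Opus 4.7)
The plan is to reduce the original $H_\infty$ LMI to the Gahinet--Apkarian/Iwasaki--Skelton form by the elimination of variables. This gives perpendicular matrices $N_R$ and $N_S$, whose columns span the kernels of $\begin{pmatrix} b_2^T & d_{12} \end{pmatrix}$ and $\begin{pmatrix} c_2^T & d_{21} \end{pmatrix}$, such that $\gamma^*$ is the infimum of $\gamma$ for which a pair $(R,S)$ of symmetric positive definite matrices exists satisfying
\begin{align*}
N_R^T M_R(R,\gamma) N_R < 0, \qquad N_S^T M_S(S,\gamma) N_S < 0,
\end{align*}
together with the coupling condition $\begin{pmatrix} R & I \\ I & S \end{pmatrix} \geq 0$. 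The key degree of freedom I would exploit is the well-known non-uniqueness of $N_R$ and $N_S$.

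Specifically, for each imaginary-axis invariant zero $\lambda_j$ of the realization $(A, b_2, c_1^T, d_{12})$ of $G_{zu}$ there is a nonzero pair $(v_j, \eta_j)$ satisfying $(A - \lambda_j I)v_j + b_2 \eta_j = 0$ and $c_1^T v_j + d_{12}\eta_j = 0$. I would assemble columns of $N_R$ using these $v_j$, completing the bottom scalar row to stay in the kernel of $\begin{pmatrix} b_2^T & d_{12}\end{pmatrix}$. The symmetric construction on $(A^T, c_2, b_1^T, d_{21})$ produces columns of $N_S$ built from the $\omega_j$. Upon expanding $N_R^T M_R(R,\gamma) N_R$, the blocks picked out by invariant-zero columns become independent of $R$ and, via a Sylvester-type computation, reduce to the transfer value $c_1^T(A-\lambda_j I)^{-1}b_1 + d_{11} = G_{zw}(\lambda_j)$. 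A Schur complement on each such block converts negative definiteness into the necessary bound $\gamma \geq |G_{zw}(\lambda_j)|$, and an analogous argument with $N_S$ yields $\gamma \geq |G_{zw}(\omega_j)|$.

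The case split in the statement then reflects the behaviour at infinity. When $d_{12}\neq 0$ and $d_{21}\neq 0$, the invariant-zero pencils have full column rank at infinity, so the finite $\lambda_j,\omega_j$ account for all the rank-deficiency directions, giving part \ref{main1}. When $d_{12}=0$ or $d_{21}=0$, a zero at infinity appears and the same block manipulation, carried out formally at $s=\infty$, produces the extra necessary bound $\gamma\geq |G_{zw}(\infty)|$, proving part \ref{main2}. The remaining contribution $\hat\gamma$ comes from the open right half-plane invariant zeros: these cannot be absorbed into the free variables $R,S$, and bookkeeping how their null vectors interact through the coupling condition collapses the LMI into the scalar condition $\gamma\geq \lambda_{\max}(\widehat M)$ with $\widehat M$ as in \eqref{gammacase1}.

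The main obstacle is sufficiency. After establishing the above bounds as necessary, one must show that whenever $\gamma$ strictly exceeds the proposed maximum, a feasible pair $(R,S)$ actually exists. My plan is to pass to the dual LMI and perform a facial reduction, showing that all dual slack along the directions forced to vanish by the imaginary-axis and unstable-zero conditions must indeed be zero; this pins the dual onto a low-dimensional face where the supremum of $\gamma$ is exactly $\lambda_{\max}(\widehat M)$. Primal feasibility is then recovered by matrix completion, building $R$ and $S$ from their prescribed partial data while preserving the coupling. Coordinating this interplay of Lyapunov equations, Sylvester equations, Schur complements, and facial reduction across the four sub-cases (Theorems \ref{thm:case1}--\ref{thm:case4}) is where the bulk of the technical work lies.
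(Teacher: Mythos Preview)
Your proposal is essentially the same approach as the paper's: exploit the non-uniqueness of the perpendicular matrices by building them from the null vectors of the invariant zeros of $G_{zu}$ and $G_{yw}$, then combine Schur complements, Lyapunov/Sylvester equations, facial reduction on the dual, and matrix completion to collapse the LMI across the four cases. One small imprecision: the imaginary-axis blocks in the primal do \emph{not} literally become independent of $R$; rather, the dependence on $R$ enters only through a single scalar per zero (because $\He(\Lambda_0^T\hat X)$ has trace zero on each $2\times 2$ block), and it is this scalar that the paper eliminates via Lemma~\ref{reduction_img2} and the matrix-completion step in Proposition~\ref{reduction_img}.
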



In general, the notion of invariant zeros is defined for the realization or state-space representation, not the transfer function. However, for the sake of brevity, we call an invariant zero of the realization $(A, b_2, c_1^T, d_{12})$ (resp. $(A^T, c_2, b_1^T, d_{21})$) {\itshape an invariant zero of $G_{zu}$} (resp. {\itshape $G_{yw}$}) throughout this paper. 

We give remarks on Theorem \ref{thm:main}. 
\begin{remark}\label{remark:main}
\begin{enumerate}
\item\label{R1} Although we have assumed that all invariant zeros $\lambda_j$ and $\omega_j$ on the imaginary axis are not eigenvalues of $A$, we can remove this assumption. Then we can describe $\gamma^*$ by the null vectors associated with invariant zeros $\lambda_j$ and $\omega_j$, instead of $G_{zw}(\lambda_j)$ and $G_{zw}(\omega_j)$ in $\gamma^*$. See Theorem \ref{thm:case3}. 
\item\label{R2} The $H_{\infty}$ control problem for \eqref{SISO} is the problem of finding a controller $K(s)$ which minimizes the $H_{\infty}$ norm of the closed-loop $\Gcl(s, K)$ obtained by connecting $K(s)$ with \eqref{SISO}. Mathematically, this problem can be formulated as follows: 
\begin{align}\label{Hinf}
\gamma^* &= \inf_{K\in\mathcal{K}}\sup_{s\in\sqrt{-1}\mathbb{R}}\sigma_{\max}\left(\Gcl(s, K)\right), 
\end{align}
where $\sqrt{-1}$ indicates the imaginary unit, 
\[
\Gcl(s, K) := G_{zw}(s)+G_{zu}(s)K(s)(1-G_{yu}(s)K(s))^{-1}G_{yw}(s)
\]
and $\mathcal{K}$ is the set of rational functions on $s$ which stabilize $G(s)$ internally. 

We see that when $\lambda$ is an invariant zero on the imaginary axis
   of $G_{zu}$ (resp. $G_{yw}$) of \eqref{SISO}, the value of
   the transfer function $\Gcl(\lambda, K)$ is $G_{zw}(\lambda)$. In
   fact, we have $G_{zu}(\lambda) = 0$ (resp. $G_{yw}(\lambda)=0$) because $\lambda$ is not an
   eigenvalue of $A$. The detail will be provided in Lemma
   \ref{lem:zero}.  
   Therefore, Theorem \ref{thm:main} contains the values of $G_{zw}$ at zeros of $G_{zu}$ and $G_{yw}$ over the imaginary axis. 

\item\label{R3} The performance index $\gamma^*$ is greater than or equal to $|G_{zw}(\infty)| = |d_{11}|$ when at least one of $d_{12}$ and $d_{21}$ is zero. To see this, let $K(s)$ be the transfer function of a controller with a realization $(A_K, b_K, c_K^T, d_K)$. 
   We see from $\Gcl(\infty, K) = d_{11} + d_{12}d_Kd_{21}$ that we can
   reduce the effect of the feedthrough term of \eqref{SISO} 
by the choice of $d_K$ when $d_{12}\neq 0$ and $d_{21}\neq
   0$. Otherwise, we cannot reduce it because the feedthrough term of
   the closed-loop system is $d_{11}$, which is independent in the choice of the parameter $d_K$. Therefore the performance index $\gamma^*$ is greater than or equal to $|G_{zw}(\infty)| = |d_{11}|$ when at least one of $d_{12}$ and $d_{21}$ is zero. 
\end{enumerate}
\end{remark}

To prove the main result, Theorem \ref{thm:main}, we consider the following cases: 
\begin{enumerate}
\item\label{Case1} Both $d_{12}$ and $d_{21}$ are nonzero, and all invariant zeros in $G_{zu}$ and $G_{yw}$ are unstable, but not on the imaginary axis. 
\item\label{Case2} Both $d_{12}$ and $d_{21}$ are nonzero, and at least one of the invariant zeros in $G_{zu}$ or $G_{yw}$ is stable, but all unstable invariant zeros are not on the imaginary axis. 
\item\label{Case3} Both $d_{12}$ and $d_{21}$ are nonzero, and at least one of the invariant zeros in $G_{zu}$ or $G_{yw}$ exists on the imaginary axis. 
\item\label{Case4} At least one of $d_{12}$ and $d_{21}$ is zero. 
\end{enumerate}

All possible generalized plant of the form \eqref{SISO} are 
exactly one of these cases. For simplicity, we will assume in this paper that all invariant zeros are real and distinct from each other. In the analysis of Case \ref{Case1}, we use the Schur complement and the Lyapunov equation. On the other hand, we use not only these mathematical tools but also a technique of the matrix completion problem in Lemma \ref{mcomp} and the dual of the resulting LMI problem in the analysis of Cases \ref{Case2}, \ref{Case3} and \ref{Case4}.



\subsection*{Why do we deal with the dual problem?}
The reason is that we can reduce the dual problem and some techniques developed in Case \ref{Case1} are available. In all of Cases \ref{Case2}, \ref{Case3} and \ref{Case4}, the dual problem is feasible, but not strictly feasible. We exploit this property of the dual of the resulting LMI problem in these cases. More precisely, the dual problem of all these cases is formulated as follows.
\begin{align}\label{DUAL}
&\sup\left\{
L_0\bullet X : L_j\bullet X = b_j \ (j=1, \ldots, m), X\in\mathbb{S}^n_+
\right\}, 
\end{align} 
where $L_0, L_1, \ldots, L_m$ are $n\times n$ symmetric matrices, $b_1, \ldots, b_m\in\mathbb{R}$ and $L_j\bullet X =\trace(L_jX)$ for $j=0, 1, \ldots, m$. Then \eqref{DUAL} has no interior feasible solutions, i.e., no positive definite solutions in \eqref{DUAL}. Hence there exists an orthogonal matrix $P\in\mathbb{R}^{n\times n}$ and a positive integer $r$ such that any dual feasible solution $X$ has the form of 
\begin{align}\label{X}
X &= P\begin{pmatrix}
\tilde{X} & O_{r\times (n-r)}\\
O_{(n-r)\times r}&O_{(n-r)\times (n-r)}
\end{pmatrix}P^T 
\end{align}
for some $\tilde{X}\in\mathbb{S}^{r}_+$. In general, it is difficult to find the nonsingular matrix $P$ in the form \eqref{X} from \eqref{DUAL}. It, however, is relatively easy to compute such a matrix $P$ in all Cases \ref{Case2} to \ref{Case4}. 

Using \eqref{X}, we can reduce the LMI problem that corresponds to the dual \eqref{DUAL}. Substituting this form \eqref{X} to \eqref{DUAL}, we obtain the following problem whose optimal value is equal to that of \eqref{DUAL}. 
\begin{align}\label{DUAL2}
&\sup\left\{
\tilde{L}_0\bullet \tilde{X} : \tilde{L}_j\bullet \tilde{X} = b_j \ (j=1, \ldots, m), \tilde{X}\in\mathbb{S}^r_+
\right\}, 
\end{align} 
where the coefficient matrix $\tilde{L}_j\in\mathbb{S}^r$ is a square submatrix of the matrix $P^TL_jP$ for all $j=0, 1, \ldots, m$. Clearly the size of the positive semidefinite matrix in the LMI problem of \eqref{DUAL2} is smaller than the size of the original LMI problem. After reducing the LMI problem, some techniques developed in Case \ref{Case1} are available to the reduced LMI problem. 

This type of reduction is called {\itshape facial reduction} in the literature of the theory of convex optimization. The facial reduction was proposed in \cite{Borwein81}. In general, the strong duality for convex optimization requires a constrained qualification. Otherwise, the strong duality may fail, i.e., no optimal solutions and/or a positive duality gap. By applying the facial reduction to such convex optimization problems, the reduced problems always satisfy a constrained qualification, and thus the strong duality holds. The facial reduction was already applied in the literature of control theory, e.g., $H_2$ analysis in \cite{Balakrishnan03} and $H_\infty$ state feedback control in \cite{Waki16a,Waki18b}.

\subsection*{Related work}
This study is inspired by \cite{Chen00}, which deals with MIMO systems. We will obtain the same result for SISO systems to \cite{Chen00} in this study. The work \cite{Chen00} used a mathematical tool in complex analysis, while our result is obtained to analysis the LMI problem and its dual. 

Furthermore, this study unifies some of the existing work \cite{Ebihara16a, Ebihara16b, Ebihara16c}. The work \cite{Ebihara16a} obtained a lower bound of the $H_\infty$ performance limitations of $(1+PK)^{-1}P$, where $P$ and $K$ are transfer functions of a SISO linear time-invariant system and a controller, respectively. This lower bound was obtained from a detailed analysis of the resulting LMI problem. The exactness of the lower bound was proved in \cite{Ebihara16c} by using a property in the dual problem. This technique was also used in \cite{Ebihara16b}, which deals with the $H_\infty$ performance limitations of sensitivity and complementary sensitivity functions for a SISO linear time-invariant system. The dual problems play an essential role in both studies. In this study, we extend the analysis obtained in \cite{Ebihara16c} and provide the performance limitation for a more general SISO $H_\infty$ output feedback control problem. The analysis in \cite{Ebihara16a} for the dual problems can be regarded as facial reduction. 

The work \cite{Helmersson12} reformulated the resulting LMI problem by using Kronecker canonical form (KCF) (a.k.a. Weierstrass form in this study) obtained from a given generalized plant. This reformulation separates variables in the LMI problem into bounded and unbounded variables. The unbounded variables are removed because they make no effect on the minimum value of the problem. In contrast, the bounded variables remain in the LMI problem. As a result, the size of the problem is reduced, and the numerical performance was improved. 

In comparison with facial reduction, the reduction via KCF deals with the LMI problem, while facial reduction deals with the dual problem. In other words, the reduction via KCF has a {\itshape dual} relation to facial reduction. In fact, one can construct an LMI problem whose dual corresponds to the dual problem reduced via facial reduction. 
Then one can see that the variables removed in the reduction via KCF also vanish in the LMI problem. In this sense, we can regard facial reduction as the dual approach of the reduction via KCF. 

\cite{SchererPhD} developed a variant of KCF for a given generalized plant and focused on the Riccati equations and inequalities obtained from the plant. A simplification of the Riccati equations and inequalities associated with the plant was provided with using this variant.

\subsection*{Organization of this paper}
The purpose of this paper is to prove Theorem \ref{thm:main}. For this, we consider the four cases, \ref{Case1} to \ref{Case4}. These cases are discussed in Sections \ref{sec:case1} to \ref{sec:case4}, respectively. Invariant zeros play an essential role in reducing the LMI problem. Section \ref{sec:preliminary} devotes the introduction of the concept of invariant zeros and their mathematical formulation. We also present the LMI formulation of $H_\infty$ output feedback control in Section \ref{sec:Hinf}. We also focus on the non-uniqueness of  perpendicular matrices, which appear in the LMI problem of $H_\infty$ output feedback control. For the perpendicular matrices, we use the null vectors associated with invariant zeros in the SISO dynamical system. We can see some existing results related to $H_\infty$ limitation analysis by using Theorem \ref{thm:main} in Section \ref{sec:application}. We give a conclusion of this paper in Section \ref{sec:conclusion}. We introduce other mathematical tools and proofs of some lemmas for proving Theorem \ref{thm:main} in Appendices.

\subsection*{Notation and symbols}
We introduce some notation and symbols used in this paper. Let $\mathbb{C}$ be the set of complex numbers. For $\lambda\in\mathbb{C}$, $\Re(\lambda)$ (resp. $\Im(\lambda))$ denotes the real (resp. imaginary) part of $\lambda$. We partition $\mathbb{C}$ into 
\[
\mathbb{C}_+=\{\lambda\in\mathbb{C} : \Re(\lambda)>0\}, \mathbb{C}_-=\{\lambda\in\mathbb{C} : \Re(\lambda)<0\} \mbox{ and } \mathbb{C}_0 =\{\lambda\in\mathbb{C} : \Re(\lambda)=0\}. 
\]

Let $\mathbb{S}^n$, $\mathbb{S}^n_{+}$ and $\mathbb{S}^n_{++}$ be the sets of $n\times n$ symmetric matrices, $n\times n$ positive semidefinite matrices and $n\times n$ positive definite matrices. For $A, B\in\mathbb{S}^n$, $A\succeq B$ denotes $A-B\in\mathbb{S}^n_+$. We define $A\bullet B = \trace(AB^T) = \sum_{k, \ell=1}^nA_{k\ell}B_{k\ell}$. We define $\He(M) = M+M^T$ for any square matrix $M$. 

We denote by $\sigma_{\max}(A)$ the maximum singular value of a matrix $A$.  
In addition, for a square matrix $A$, 
we denote by $\lambda_{\max}(A)$ 
the maximum eigenvalue of $A$ when $A$ is symmetric.  

For a given matrix $G\in{\mathbb{R}}^{n\times m}$ with rank $r$, $G^{\perp}$ denotes an $n\times (n-r)$ matrix which satisfies
$G^TG^{\perp} = O_{m\times (n-r)}$ and $(G^{\perp})^TG^{\perp} \in{\mathbb{S}}^{n-r}_{++}$. We call $G^{\perp}$  {\itshape a perpendicular matrix of $G$} throughout this paper. In general, $G^{\perp}$ is not unique for a given matrix $G$. 
$G^{\perp T}$ stands for the transpose of $G^{\perp}$ in this paper.

\section{Preliminaries}\label{sec:preliminary}
In this section, we review the definition and several properties of the invariant zeros of SISO LTI systems. 
Let us consider the following SISO LTI system $G$ described by
\begin{align}\label{fundSISO}
G:\ 
&\left\{
\begin{array}{lll}
\dot{x} & = & Ax + bu\\
y &=& c^Tx + du, 
\end{array}
\right. 
\end{align}
where $A\in\mathbb{R}^{n\times n}$, $b, c\in\mathbb{R}^n$ and $d\in\mathbb{R}$. 
The transfer function of the system $G$ is given by 
$G(s) = c^T(sI_n-A)^{-1}b + d$. 
We say that $\lambda\in\mathbb{C}$ is an invariant zero of 
\eqref{fundSISO} if 
\begin{align}\label{zerorank}
\rank\begin{pmatrix}
A-\lambda I_n & b\\
c^T & d
\end{pmatrix} < n+1. 
\end{align}
In addition, we say that 
an invariant zero $\lambda$ is {\itshape stable} 
if the real part of $\lambda$ is negative, i.e.,
$\lambda\in\mathbb{C}_-$. 
Otherwise, we say that the invariant zero $\lambda$ is {\itshape unstable}. 

We first provide some fundamental facts on the invariant zeros.  
\begin{lemma}\label{lem:zero}
\begin{enumerate}
\item\label{F0} If $\lambda\in\mathbb{C}$ is an invariant zero of
   \eqref{fundSISO} and if $\lambda\not\in\lambda(A)$ then $G(\lambda)=0$. Here $\lambda(A)$ denotes the set of all the eigenvalues of the matrix $A$. 
\item\label{F1} $\lambda\in\mathbb{C}$ is an invariant zero of \eqref{fundSISO} 
   if and only if there exists  
   $\left(\begin{smallmatrix}
   v_{\mathrm{L}}\\
   \hat{v}_{\mathrm{L}}
   \end{smallmatrix}\right)\in\mathbb{C}^{n+1}\setminus\{0\}$
   such that 
   \begin{align}\label{zerorank_stbl}
   \begin{pmatrix}
    v_{\mathrm L}^T & \hat{v}_{\mathrm L}
   \end{pmatrix}
   \begin{pmatrix}
    A & b\\
    c^T & d
   \end{pmatrix} 
   &= \lambda  
   \begin{pmatrix}
    v_{\mathrm L}^T & 0
   \end{pmatrix}. 
   \end{align}
   Similarly, 
   $\lambda\in\mathbb{C}$ is an invariant zero of \eqref{fundSISO} 
   if and only if there exists  
   $\left(
   \begin{smallmatrix}
   v_{\mathrm R}\\
   \hat{v}_{\mathrm R}
   \end{smallmatrix}
   \right)\in\mathbb{C}^{n+1}\setminus\{0\}$
   such that 
   \begin{align}\label{zerorank_stbr}
   \begin{pmatrix}
    A & b\\
    c^T & d
   \end{pmatrix} 
   \begin{pmatrix}
   v_{\mathrm R} \\ \hat{v}_{\mathrm R}
   \end{pmatrix}
   &= \lambda  
   \begin{pmatrix}
    v_{\mathrm R} \\ 0
   \end{pmatrix}. 
   \end{align}
 \item\label{F2} 
   If $(A, b)$ in \eqref{fundSISO} is controllable 
   then $\hat{v}_{\mathrm L}\in\bbC$ in \eqref{zerorank_stbl} is nonzero. 
   Similarly, if $(A, b)$ in \eqref{fundSISO} is stabilizable and if 
   $\lambda\in\bbC_+\cup\bbC_0$, 
   then $\hat{v}_{\mathrm L}\in\bbC$ is nonzero. 
 \item\label{F3} 
   If $(A, c^T)$ in \eqref{fundSISO} is observable
   then $\hat{v}_{\mathrm R}\in\bbC$ in \eqref{zerorank_stbr} is nonzero. 
   Similarly, if $(A, c^T)$ in \eqref{fundSISO} is detectable and if 
   $\lambda\in\mathbb{C}_+\cup\mathbb{C}_0$, 
   then $\hat{v}_{\mathrm R}\in\bbC$ is nonzero. 
   %
\end{enumerate}
\end{lemma}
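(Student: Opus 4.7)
The plan is to address the four items by combining a Schur-complement reduction of the Rosenbrock matrix with the Popov--Belevitch--Hautus (PBH) eigenvector test. The Rosenbrock matrix
\[
R(\lambda) = \begin{pmatrix} A - \lambda I_n & b \\ c^T & d \end{pmatrix}
\]
will be the common object throughout.

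For item \ref{F0}, I would exploit the hypothesis $\lambda \notin \lambda(A)$, which makes the $(1,1)$-block $A - \lambda I_n$ invertible. The standard Schur-complement rank formula then gives $\rank R(\lambda) = n + \rank\bigl(d - c^T(A - \lambda I_n)^{-1} b\bigr)$. Since $G(\lambda) = c^T(\lambda I_n - A)^{-1} b + d = d - c^T(A - \lambda I_n)^{-1} b$, the Schur complement is exactly $G(\lambda)$. The invariant-zero hypothesis $\rank R(\lambda) < n+1$ therefore forces $G(\lambda) = 0$.

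For item \ref{F1}, the rank-deficiency condition \eqref{zerorank} is equivalent to the existence of a nonzero vector in either the left or the right null space of $R(\lambda)$. Rearranging
\[
\begin{pmatrix} v_{\mathrm L}^T & \hat v_{\mathrm L} \end{pmatrix} R(\lambda) = 0
\]
by moving the $-\lambda I_n$ contribution to the right-hand side yields \eqref{zerorank_stbl}; the symmetric manipulation on the right null space yields \eqref{zerorank_stbr}. Conversely, \eqref{zerorank_stbl} and \eqref{zerorank_stbr} are precisely the left and right null conditions for $R(\lambda)$, so each is equivalent to the rank drop.

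For items \ref{F2} and \ref{F3}, I would expand \eqref{zerorank_stbl} blockwise into $v_{\mathrm L}^T A + \hat v_{\mathrm L} c^T = \lambda v_{\mathrm L}^T$ and $v_{\mathrm L}^T b + \hat v_{\mathrm L} d = 0$. Assuming $\hat v_{\mathrm L} = 0$ reduces these to $v_{\mathrm L}^T A = \lambda v_{\mathrm L}^T$ and $v_{\mathrm L}^T b = 0$ with $v_{\mathrm L} \neq 0$ (since the pair $(v_{\mathrm L}, \hat v_{\mathrm L})$ is nonzero by assumption). This is exactly the PBH obstruction to controllability of $(A,b)$, which is ruled out entirely under controllability and ruled out on $\mathbb{C}_+ \cup \mathbb{C}_0$ under stabilizability, giving a contradiction in each case and forcing $\hat v_{\mathrm L} \neq 0$. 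Item \ref{F3} is the dual: expanding \eqref{zerorank_stbr} yields $A v_{\mathrm R} + b \hat v_{\mathrm R} = \lambda v_{\mathrm R}$ and $c^T v_{\mathrm R} + d \hat v_{\mathrm R} = 0$, and setting $\hat v_{\mathrm R} = 0$ produces a nonzero right eigenvector of $A$ in the right null space of $c^T$, contradicting the observability PBH test (or its detectability variant restricted to $\mathbb{C}_+ \cup \mathbb{C}_0$).

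There is no serious obstacle here: each item reduces to a one-line linear-algebra step once the appropriate tool (Schur complement for \ref{F0}, transcription for \ref{F1}, PBH for \ref{F2} and \ref{F3}) is invoked. The only subtlety is the small bookkeeping point in \ref{F2} and \ref{F3} that one must separately note $v_{\mathrm L} \neq 0$ (respectively $v_{\mathrm R} \neq 0$) when $\hat v_{\mathrm L}$ (respectively $\hat v_{\mathrm R}$) is assumed to vanish, so that the PBH test applies to a genuine nonzero eigenvector.
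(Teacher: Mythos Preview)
Your proposal is correct and follows essentially the same approach as the paper. For item~\ref{F0} the paper performs the block column reduction explicitly (subtracting $(A-\lambda I_n)^{-1}b$ times the first block column from the second) rather than citing the Schur-complement rank formula by name, and for items~\ref{F2} and~\ref{F3} the paper uses exactly your PBH contradiction; item~\ref{F1} is declared obvious in both.
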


\begin{proof}
The validity of the assertion \ref{F0} readily follows since for
 $\lambda\not\in\lambda(A)$
we have 
\begin{align*}
 \rank\begin{pmatrix}
A-\lambda I_n & b\\
c^T & d
\end{pmatrix} &=
 \rank\begin{pmatrix}
A-\lambda I_n & 0\\
c^T & c^T(\lambda I_n-A)^{-1}b+d.  
\end{pmatrix} \\
&=
 \rank\begin{pmatrix}
A-\lambda I_n & 0\\
c^T & G(\lambda)
\end{pmatrix}. 
\end{align*}
The validity of the assertion \ref{F1} is obvious.  
For the proof of the assertion \ref{F2}, 
we first consider the case where $(A, b)$ is controllable, 
i.e., $\rank\begin{pmatrix}
A-sI_n& b
\end{pmatrix}=n\ (\forall s\in\bbC)$.  
Suppose $\hat{v}_{\mathrm L} = 0$ for contradiction. 
Then it follows from \eqref{zerorank_stbl} that 
$v_{\mathrm L}^TA = \lambda v_{\mathrm L}^T$ and $v_{\mathrm L}^Tb = 0$. 
This equation contradicts the controllability of $(A, b)$. 
Therefore $\hat{v}_{\mathrm L} \neq 0$. 
We next consider the case where $(A, b)$ is stabilizable, 
i.e., $\rank\begin{pmatrix}
A-sI_n& b
\end{pmatrix}=n\ (\forall s\in\bbC_+\cup\bbC_0)$.  
Suppose $\hat{v}_{\mathrm L} = 0$ for contradiction. 
Then it follows from \eqref{zerorank_stbl} that 
$v_{\mathrm L}^TA = \lambda v_{\mathrm L}^T$ and $v_{\mathrm L}^Tb = 0$ for $\lambda\in\bbC_+\cup\bbC_0$. 
This equation contradicts the stabilizability of $(A, b)$. 
Therefore again $\hat{v}_{\mathrm L} \neq 0$. 
The assertion \ref{F3} can be proved similarly to the proof of
the assertion \ref{F2}.  
\end{proof}

In the following, 
we call $\left(
   \begin{smallmatrix}
   v_{\mathrm L}\\
   \hat{v}_{\mathrm L}
   \end{smallmatrix}
   \right)\in\mathbb{C}^{n+1}$ that satisfies \eqref{zerorank_stbl}
{\itshape the left null vector associated
with the invariant zero $\lambda$ of $G$}. 
Similarly, 
we call $\left(
   \begin{smallmatrix}
   v_{\mathrm R}\\
   \hat{v}_{\mathrm R}
   \end{smallmatrix}
   \right)\in\mathbb{C}^{n+1}$ that satisfies \eqref{zerorank_stbr}
{\itshape the right null vector associated
with the invariant zero $\lambda$ of $G$}.

As we see in the next theorem, invariant zeros in \eqref{fundSISO} plays an essential role in a canonical quasi-diagonal form of the Rosenbrock system matrix $\left(\begin{smallmatrix} sI_n-A & -b\\
-c^T & -d\end{smallmatrix}\right)$ of \eqref{fundSISO}. The canonical form is known as the Weierstrass form in \cite[eq. (3.19)]{Lewis86}. This theorem follows from \cite[Theorem 3 in Chapter XII]{Gantmacher89}.

\begin{theorem}\label{Weierstrass}
Assume $\left(
\begin{smallmatrix}
b\\
d
\end{smallmatrix}
\right)\neq 0$ \mbox{ or } $\left(
\begin{smallmatrix}
c\\
d
\end{smallmatrix}
\right)\neq 0$. Then there exist non-singular matrices $P, Q\in\mathbb{C}^{(n+1)\times(n+1)}$, a nonnegative integer $r$, a Jordan matrix $\Lambda\in\mathbb{C}^{(n-r)\times (n-r)}$ and a nilpotent $N\in\mathbb{C}^{(r+1)\times (r+1)}$ such that 
\begin{align}
  \label{cqf}
  P\begin{pmatrix}
  s I_n-A & -b\\
  -c^T & -d
  \end{pmatrix}Q& = \begin{pmatrix}
  s I_{n-r} - \Lambda & O\\
  O & s N - I_{r+1}
  \end{pmatrix}. 
\end{align}
Furthermore, the followings hold. 
\begin{enumerate}
  \item\label{a1} All eigenvalues of $\Lambda$ are invariant zeros of \eqref{fundSISO}. 
  \item\label{a2} The matrix $N$ consists of only one Jordan cell, i.e., $N^{r} \neq O$ and $N^{r+1} = O$. 
  \item\label{a3} 
  If $d\neq 0$, then $r=0$. 
  \item\label{a4} If $d=0$, then $c^Tb=0$, $c^TAb = 0$, $\ldots, c^TA^{r-2}b = 0$ and $c^TA^{r-1}b \neq 0$. 
  \end{enumerate}
\end{theorem}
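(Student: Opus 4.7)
The plan is to recast the Rosenbrock system matrix as the matrix pencil $sE - F$, where
\[
E = \begin{pmatrix} I_n & 0 \\ 0 & 0 \end{pmatrix}, \qquad F = \begin{pmatrix} A & b \\ c^T & d \end{pmatrix},
\]
and apply the classical Weierstrass (Kronecker) decomposition of a regular matrix pencil, which is exactly Gantmacher's Chapter XII, Theorem 3. First I would verify regularity of the pencil via the Schur complement expansion
\[
\det(sE - F) = -\det(sI_n - A)\, G(s),
\]
which under the stated nondegeneracy hypothesis is not identically zero. Invoking Gantmacher's theorem then yields nonsingular $P, Q \in \bbC^{(n+1)\times(n+1)}$ with
\[
P(sE - F)Q = \begin{pmatrix} sI_{n-r} - \Lambda & O \\ O & sN - I_{r+1} \end{pmatrix},
\]
where $\Lambda \in \bbC^{(n-r)\times(n-r)}$ is a Jordan matrix carrying the finite elementary divisors and $N \in \bbC^{(r+1)\times(r+1)}$ is a nilpotent matrix carrying the elementary divisors at infinity.

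With the decomposition in hand, I would verify the four assertions in turn. For \ref{a1}, a finite $\lambda \in \bbC$ is an eigenvalue of the pencil iff $\rank(\lambda E - F) < n+1$, which is exactly the definition \eqref{zerorank} of an invariant zero, so the eigenvalues of $\Lambda$ are precisely the invariant zeros of \eqref{fundSISO}. For \ref{a2}, I would match the $s$-linear part of the pencil: the decomposition forces $PEQ = \diag(I_{n-r}, N)$, and nonsingularity of $P, Q$ yields $\dim \ker E = \dim \ker(PEQ) = \dim \ker N$. Since $E = \diag(I_n, 0)$ has a one-dimensional kernel, the nilpotent $N$ has exactly one Jordan block, which is the statement $N^r \neq O$ and $N^{r+1} = O$.

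For \ref{a3} and \ref{a4} I would compare polynomial degrees on the two sides of the expression for $\det(sE - F)$. From the block form, using that $N$ is nilpotent and hence $\det(sN - I_{r+1}) = (-1)^{r+1}$, one has
\[
\det(sE - F) = (-1)^{r+1}\det(sI_{n-r} - \Lambda),
\]
a polynomial of degree exactly $n - r$ in $s$. On the other hand, $\det(sE - F) = -\det(sI_n - A)\,G(s)$, whose leading behavior as $s\to\infty$ is governed by the behavior of $G$ at infinity. When $d \neq 0$, $G(s) \to d$ as $s\to\infty$, so the degree is $n$, forcing $r = 0$; this is \ref{a3}. When $d = 0$, the Markov expansion $G(s) = \sum_{k \geq 0} (c^T A^k b)\, s^{-(k+1)}$ shows that the leading term of $G$ has order $s^{-(k_\star + 1)}$, where $k_\star$ is the smallest index with $c^T A^{k_\star} b \neq 0$; equating $n - r = n - k_\star - 1$ gives $r = k_\star + 1$, which is precisely \ref{a4}.

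The only nontrivial step is the regularity check and the invocation of Gantmacher's theorem; once the Weierstrass decomposition is in place, the four assertions reduce to transparent bookkeeping with kernels and polynomial degrees. The main obstacle is thus notational rather than mathematical, ensuring that the hypothesis indeed places us in the regular-pencil setting of Chapter XII, Theorem 3 of \cite{Gantmacher89}.
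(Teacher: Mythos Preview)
Your proposal is correct, and your treatment of assertions \ref{a1} and \ref{a2} is essentially identical to the paper's (rank of $PEQ$, equivalently dimension of $\ker E$). For assertions \ref{a3} and \ref{a4}, however, you take a genuinely different route. The paper works column-by-column with the block $\left(\begin{smallmatrix}Q_{12}\\Q_{22}\end{smallmatrix}\right)$ of $Q$: from the identity \eqref{cqf} it extracts the recursion $Aq_k+b\hat q_k=q_{k+1}$, $c^Tq_k=0$, $q_1=0$, and proves the Markov-parameter relations by induction, deriving $r=0$ when $d\neq 0$ by a nonsingularity contradiction. Your argument instead equates the degree of $\det(sE-F)$ computed two ways---as $n-r$ from the Weierstrass side, and as $n$ minus the relative degree of $G$ from the Schur complement $-\det(sI_n-A)G(s)=-c^T\mathrm{adj}(sI_n-A)b-d\det(sI_n-A)$---which is shorter and conceptually cleaner. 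What the paper's longer computation buys is an \emph{explicit} form for the columns of $\left(\begin{smallmatrix}Q_{12}\\Q_{22}\end{smallmatrix}\right)$ (namely $(0,1)^T,(b,0)^T,(Ab,0)^T,\ldots$), which is exactly what feeds into Remark~\ref{remark1} and Lemma~\ref{lem:infinitevec} and is then used throughout Sections~\ref{sec:Hinf}--\ref{sec:case4}; your degree argument does not produce this structural information.

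One small caveat: your regularity check asserts that the ``or'' hypothesis forces $\det(sE-F)\not\equiv 0$, but in fact $(b,d)=0$ or $(c,d)=0$ each make a row/column of the pencil vanish identically, and even with both nonzero one can have $G\equiv 0$ when $d=0$. The paper simply cites Gantmacher without addressing this either, so the gap is in the theorem's stated hypothesis rather than in your argument; just be aware that what is actually needed is regularity of the pencil (equivalently $G\not\equiv 0$), not merely the ``or'' condition.
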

\begin{proof}
\eqref{cqf} follows from \cite[Theorem 3 in Chapter XII]{Gantmacher89} and Jordan decomposition. We here prove all assertions. We notice that \eqref{cqf} is the identity on $s$. Then we obtain
\begin{align}\label{sterm}
  \begin{pmatrix}
I_n & 0\\
0^T&0
\end{pmatrix}Q &= P^{-1}\begin{pmatrix}
I_{n-r} & O\\
O & N
\end{pmatrix}, \\
\label{const}
\begin{pmatrix}
A & b\\
c^T & d
\end{pmatrix}Q& = P^{-1}\begin{pmatrix}
\Lambda & O \\
O & I_{r+1}
\end{pmatrix}. 
\end{align}

For the assertion \ref{a1}, we denote $P^{-1}, Q\in\mathbb{C}^{(n+1)\times (n+1)}$ by
\[
P^{-1} = \bordermatrix{
 & (n-r) & (r+1) \cr 
n& P_{11} & P_{12}\cr
1&P_{21} & P_{22}
}, Q = \bordermatrix{
 & (n-r) & (r+1) \cr 
n& Q_{11} & Q_{12}\cr
1& Q_{21} & Q_{22}
}. 
\]
We obtain the following equations from \eqref{sterm} and \eqref{const}, respectively. 
\[P_{11} = Q_{11}, O=P_{21} \mbox{ and } 
\begin{pmatrix}
A & b\\
c^T & d
\end{pmatrix}\begin{pmatrix}
Q_{11}\\ 
Q_{21}
\end{pmatrix} = \begin{pmatrix}
P_{11}\Lambda \\
P_{21}\Lambda
\end{pmatrix}. 
\]
Eliminating $P_{11}$ and $P_{21}$ from those equations, we obtain
\begin{align}\label{eq:finitezero}
\begin{pmatrix}
A & b\\
c^T& d
\end{pmatrix}\begin{pmatrix}
Q_{11} \\
Q_{21}
\end{pmatrix}&=\begin{pmatrix}
Q_{11} \\
O
\end{pmatrix}\Lambda. 
\end{align}
We see from \eqref{eq:finitezero} that all eigenvalues of $\Lambda$ are invariant zeros of \eqref{fundSISO}. 

For the assertion \ref{a2}, it is sufficient to prove $\rank{N} = r$ because $N$ is a nilpotent. This follows from \eqref{sterm}. In fact, we obtain $\rank{I_n} = \rank{I_{n-r}} + \rank{N}$ from \eqref{sterm} because both $P$ and $Q$ are non-singular. 

For simplicity of the proof of the assertions \ref{a3} and \ref{a4}, we restrict the form of $N$. As we have already seen, the rank of $N$ is $r$. For any nilpotent $N$ with rank $r$, there exists a non-singular matrix $\hat{P}\in\mathbb{C}^{(r+1)\times (r+1)}$ such that 
\begin{align}\label{eq:nilpotent0}
\begin{pmatrix}
0 & 1 & & &0\\
0 & 0 & 1 & &\\
& & \ddots&\ddots &\\
& && \ddots & 1\\
0 & &&&0
\end{pmatrix} &= \hat{P}^{-1}N\hat{P}. 
\end{align}
This is obtained from the Jordan decomposition of $N$. Thus we set $N$ as the matrix in the left-hand side of \eqref{eq:nilpotent0} and will prove the assertions \ref{a3} and \ref{a4}.

For the assertion \ref{a3}, we suppose to the contrary that $r>0$. We obtain from \eqref{sterm} and \eqref{const} 
\begin{align}\label{eq:nilpotent}
\begin{pmatrix}
A & b\\
c^T & d
\end{pmatrix}\begin{pmatrix}
Q_{12} \\
Q_{22}
\end{pmatrix}N &= \begin{pmatrix}
Q_{12}\\
O
\end{pmatrix}. 
\end{align}
 We denote $Q_{12}$ and $Q_{22}$ by 
$Q_{12} = \begin{pmatrix}
q_1 & \cdots & q_{r+1}
\end{pmatrix}$ and $Q_{22} = \begin{pmatrix}
\hat{q}_1 & \cdots & \hat{q}_{r+1}
\end{pmatrix}$. Substituting $Q_{12}$ and $Q_{22}$ to \eqref{eq:nilpotent}, we then obtain
\begin{align}\label{eq:nilpotent2}
\begin{pmatrix}
A & b\\
c^T & d
\end{pmatrix}\begin{pmatrix}
0 &q_1 &\cdots &q_{r} \\
0 &\hat{q}_1 &\cdots &\hat{q}_{r}
\end{pmatrix} = \begin{pmatrix}
q_1 &q_2 &\cdots &q_{r+1} \\
0 & 0&\cdots &0
\end{pmatrix}
\end{align}
We see from this equation that $q_{1}=0$ and $\hat{q}_{1}d = 0$. Since we assumed $d\neq 0$, $\hat{q}_{1} = 0$. This contradicts the fact that $Q$ is non-singular. Thus $r=0$. In particular, we see that $Q_{12}=0$ and $Q_{22}\neq 0$ if $r=0$.  

For the assertion \ref{a4}, we focus on \eqref{eq:nilpotent2}. Then we obtain
\begin{align}\label{eq:nilpotent3}
  &\left\{
  \begin{array}{lcll}
  Aq_k +b\hat{q}_k&=& q_{k+1} & (k=1, 2, \ldots, r), \\
  c^Tq_k &=& 0 & (k=1, 2, \ldots, r), \\
  q_{1} &=& 0. &
  \end{array}
  \right. 
\end{align}
It follows from the third equation that we have $\hat{q}_{1}\neq 0$. Otherwise the matrix $Q$ has the zero column and thus $Q$ is singular. 

Next, eliminating $q_k$ at the left-hand side in the first equations of \eqref{eq:nilpotent3}, we obtain 
\begin{align}
  \label{eq:firsteq}
  \hat{q}_{1}A^{k-1}b + \hat{q}_{2}A^{k-2}b + \cdots + \hat{q}_{k}b&= q_{k+1}
\end{align}
for $k= 1, \ldots, r$. We can prove $c^Tb = 0$, $c^TAb = 0, \ldots, c^TA^{r-2}b = 0$ by using the induction on $k$ and the second equations of \eqref{eq:nilpotent3}.  

Finally, we prove $c^TA^{r-1}b \neq 0$. From \eqref{eq:firsteq}, we have $\hat{q}_{1}c^TA^{r-1}b = c^Tq_{r+1}$. If $c^Tq_{r+1} = 0$, then we obtain the contradiction. In fact, $c^TQ_{11} = 0$ and $c^TQ_{12} = 0$. We then obtain $\begin{pmatrix} c^T& 0\end{pmatrix}Q = 0$. Since $c\neq 0$, this contradicts to the fact that $Q$ is non-singular. 
\end{proof}

\begin{remark}\label{remark1}
We can summarize Theorem \ref{Weierstrass} as follows: There exist a non-singular matrix $Q\in\mathbb{C}^{(n+1)\times (n+1)}$, a nonnegative integer $r$, a Jordan matrix $\Lambda\in\mathbb{C}^{(n-r)\times (n-r)}$ and a nilpotent $N\in\mathbb{C}^{(r+1)\times (r+1)}$ such that 
\begin{align}\label{cqf2}
  \begin{pmatrix}
  A & b\\
  c^T & d
  \end{pmatrix} \begin{pmatrix}
  Q_{11} & Q_{12}\\
  Q_{21} & Q_{22}
  \end{pmatrix}\begin{pmatrix}
  I_{n-r} & O\\
  O & N
  \end{pmatrix}& = \begin{pmatrix}
  Q_{11} & Q_{12}\\
  O & O
  \end{pmatrix}\begin{pmatrix}
  \Lambda & O \\
  O & I_{r+1}
  \end{pmatrix}.
\end{align}
We can observe the following from \eqref{cqf2}. 
\begin{enumerate}
\item\label{b1} If $\lambda$ is an eigenvalue of $\Lambda$ in \eqref{cqf2} with the (algebraic) multiplicity $m$, then we call it {\itshape the invariant zero of $G$ with the multiplicity $m$}. We can see that \eqref{fundSISO} has $(n-r)$ invariant zeros including their multiplicity.  

In analogy to the multiplicity, we can define the geometric multiplicity $m_{\mathrm g}$ of the invariant zero. If both multiplicity do not coincide, then we {\itshape cannot} select $m$ linearly independent right null vectors associated with the invariant zero. We, however, can define {\itshape the generalized right null vectors} $\left(
\begin{smallmatrix}
q_{m_{\mathrm g}+1}\\
\hat{q}_{m_{\mathrm g}+1}
\end{smallmatrix}
\right), \ldots, \left(
\begin{smallmatrix}
q_{m}\\
\hat{q}_{m}
\end{smallmatrix}
\right)$ in a similar manner to the generalized eigenvectors. Then all the (generalized) right null vectors $\left(
\begin{smallmatrix}
q_{1}\\
\hat{q}_{1}
\end{smallmatrix}
\right), \ldots, \left(
\begin{smallmatrix}
q_{m}\\
\hat{q}_{m}
\end{smallmatrix}
\right)$ are linearly independent. For instance, if $m>1$ and $m_{\mathrm g} =1$, then we have one right null vector and $(m-1)$ generalized right null vectors associated with the invariant zero $\lambda$ as follows. 
\[
\begin{pmatrix}
A & b\\
c^T& d
\end{pmatrix}\begin{pmatrix}
q_1\\
\hat{q}_1
\end{pmatrix} = \lambda \begin{pmatrix}
q_1\\
0
\end{pmatrix}, 
\begin{pmatrix}
A & b\\
c^T& d
\end{pmatrix}\begin{pmatrix}
q_k\\
\hat{q}_k
\end{pmatrix} = \lambda \begin{pmatrix}
q_k\\
0
\end{pmatrix} + \begin{pmatrix}
q_{k-1}\\
0
\end{pmatrix} \ (k=2, \ldots, m). 
\]
Throughout this paper, we refer to the generalized null vector as {\itshape the right null vector associated with the invariant zero $\lambda$} for brevity. We can see that the submatrix $\left(
\begin{smallmatrix}
Q_{11}\\
Q_{21}
\end{smallmatrix}
\right)$ obtained by collecting these $(n-r)$ right null vectors associated with all invariant zeros of $G$. 

\item\label{b2} Assertions \ref{a3} and \ref{a4} in Theorem \ref{Weierstrass} imply that the nonnegative integer $r$ is equal to the {\itshape relative degree of \eqref{fundSISO}}. In addition, wa say that $G$ has an infinite invariant zero if $r>0$. 
\item\label{b3} We see from the proof of Theorem \ref{Weierstrass} that we can take $\left(
\begin{smallmatrix}
0\\
1
\end{smallmatrix}
\right)$ as the first column of the submatrix $\left(
\begin{smallmatrix}
Q_{12}\\
Q_{22}
\end{smallmatrix}
\right)$. Thus $Q_{11}$ is of full column rank. Otherwise we obtain a contradiction to the fact that $Q$ is non-singular. 
\end{enumerate}


We discussed the right generalized null vectors associated with the invariant zeros of $G$ from \eqref{cqf2}. Similarly, we can also introduce the left generalized null vectors associated with the invariant zeros of $G$ from \eqref{cqf}. For this, we consider the dual dynamical system of \eqref{fundSISO}.
\begin{align}\label{dualfundSISO}
G_{\scriptstyle \mathrm{d}}:\ 
&\left\{
\begin{array}{lll}
\dot{x}_{\mathrm{d}} & = & A^Tx_{\mathrm{d}} + cu_{\mathrm{d}}\\
y_{\mathrm{d}} &=& b^Tx_{\mathrm{d}} + du_{\mathrm{d}}, 
\end{array}
\right. 
\end{align}
Applying Theorem \ref{Weierstrass} to \eqref{dualfundSISO}, we then obtain the following Wierestrass form.
\begin{align*}
  \begin{pmatrix}
  A^T & c\\
  b^T & d
  \end{pmatrix} \begin{pmatrix}
  Q_{11} & Q_{21}\\
  Q_{12} & Q_{22}
  \end{pmatrix}\begin{pmatrix}
  I_{n-r} & O\\
  O & N
  \end{pmatrix}& = \begin{pmatrix}
  Q_{11} & Q_{12}\\
  O & O
  \end{pmatrix}\begin{pmatrix}
  \Lambda & O \\
  O & I_{r+1}
  \end{pmatrix}.
\end{align*}
By taking the transpose, then we obtain
\begin{align}\label{cqf3}
 \begin{pmatrix}
  I_{n-r} & O\\
  O & N^T
  \end{pmatrix}\begin{pmatrix}
  Q_{11}^T & Q_{21}^T\\
  Q_{12}^T & Q_{22}^T
  \end{pmatrix} \begin{pmatrix}
  A & b\\
  c^T & d
  \end{pmatrix} & = \begin{pmatrix}
  \Lambda^T & O \\
  O & I_{r+1}
  \end{pmatrix}\begin{pmatrix}
  Q_{11}^T & O \\
  Q_{12}^T & O
  \end{pmatrix}. 
\end{align}
In an analogous way to the generalized right null vector, we can define the generalized left null vector(s) from \eqref{cqf3}. We refer to those as left null vector(s) associated with an invariant zero as well as the case of the generalized right null vector(s) throughout this paper. 
\end{remark}
We have seen the flexibility in choosing $\left(\begin{smallmatrix}
Q_{12} \\ Q_{22}
\end{smallmatrix}
\right)$ in the proof of the assertions \ref{a3} and \ref{a4} in Theorem \ref{Weierstrass}. We give possible forms of the submatrix $\left(\begin{smallmatrix}
Q_{12} \\ Q_{22}
\end{smallmatrix}
 \right)$ of $Q$ in \eqref{cqf2} and \eqref{cqf3}. 
We give a proof in \ref{subapp:lemma2}. 

\begin{lemma}\label{lem:infinitevec}
Let $r$ be the relative degree of \eqref{fundSISO}. 
Then we can set the submatrix $\left(\begin{smallmatrix}
Q_{12} \\ Q_{22}
\end{smallmatrix}
\right)$ of $Q$ in \eqref{cqf2} as follows:
\begin{align}\label{infiniteRvec}
\begin{pmatrix}
Q_{12} \\ Q_{22}
\end{pmatrix} &= \left\{
\begin{array}{cl}
\begin{pmatrix}
0 \\
1 
\end{pmatrix}& \mbox{ if } r=0\\
\begin{pmatrix}
0 &b &Ab &\cdots &A^{r-2}b & A^{r-1}b \\
1 &0& 0& \cdots &0& 0
\end{pmatrix} & \mbox{ if } r > 0. 
\end{array}
\right.
\end{align}
Similarly, we can set the submatrix $\left(\begin{smallmatrix}
Q_{12} \\ Q_{22}
\end{smallmatrix}\right)$ of $Q$ in \eqref{cqf3} as follows:
\[
\begin{pmatrix}
Q_{12} \\ Q_{22}
\end{pmatrix} = \left\{
\begin{array}{cl}
\begin{pmatrix}
0 \\
1 
\end{pmatrix}& \mbox{ if } r=0\\
\begin{pmatrix}
0 &c &A^Tc &\cdots &(A^T)^{r-2}c &(A^T)^{r-1}c \\
1 &0& 0& \cdots & 0&0
\end{pmatrix} & \mbox{ if } r > 0. 
\end{array}
\right.
\]
\end{lemma}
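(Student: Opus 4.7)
The plan is to take the block recurrence extracted from the Weierstrass form \eqref{cqf2} for the columns of $\begin{pmatrix} Q_{12} \\ Q_{22} \end{pmatrix}$, plug in the proposed explicit vectors, and verify both the recurrence and the non-singularity of $Q$. The recurrence was already derived in the proof of Theorem \ref{Weierstrass}: with $N$ normalized as in \eqref{eq:nilpotent0}, the block equation
\begin{equation*}
\begin{pmatrix} A & b \\ c^T & d \end{pmatrix} \begin{pmatrix} Q_{12} \\ Q_{22} \end{pmatrix} N = \begin{pmatrix} Q_{12} \\ O \end{pmatrix}
\end{equation*}
unpacks column by column into $q_1 = 0$, $A q_k + b \hat{q}_k = q_{k+1}$, and $c^T q_k + d\hat{q}_k = 0$ for $k = 1, \ldots, r$ (the $d\hat{q}_k$ term vanishes when $r > 0$ because $d = 0$ by assertion \ref{a3}).

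The case $r = 0$ is immediate: the equation forces $Q_{12} = 0$, and $d \neq 0$ together with the full column rank of $Q_{11}$ from Remark \ref{remark1}\ref{b3} allows us to take $Q_{22} = 1$ to keep $Q$ non-singular. For $r > 0$, assertion \ref{a4} provides the vanishing Markov parameters $c^T b = c^T A b = \cdots = c^T A^{r-2} b = 0$ together with $c^T A^{r-1} b \neq 0$. I would then substitute $q_1 = 0$, $\hat{q}_1 = 1$, $q_{k+1} = A^{k-1} b$, and $\hat{q}_k = 0$ for $k \geq 2$ into the recurrence: the shift $A q_k + b \hat{q}_k = q_{k+1}$ is immediate, and the orthogonality $c^T q_k = c^T A^{k-2} b = 0$ for $k = 2, \ldots, r$ is precisely the Markov-parameter vanishing.

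The main obstacle is verifying that this explicit choice extends to a non-singular $Q$. For this I would show that the $r+1$ proposed columns are themselves linearly independent, making them a Jordan chain of the correct length at the invariant zero at infinity, hence complementary to the columns of $\begin{pmatrix} Q_{11} \\ Q_{21} \end{pmatrix}$ by the spectral decomposition underlying the Weierstrass form. The first column $(0, 1)^T$ is trivially independent of the rest since the others all have $0$ in the last coordinate, and to show that $\{b, A b, \ldots, A^{r-1} b\}$ is linearly independent I would take a dependence $\sum_{k=0}^{k_0} \alpha_k A^k b = 0$ with $\alpha_{k_0} \neq 0$ at the maximal index, then left-multiply by $c^T A^{r-1-k_0}$: every term with $k < k_0$ has exponent in $\{0, \ldots, r-2\}$ and vanishes by assertion \ref{a4}, leaving $\alpha_{k_0}\, c^T A^{r-1} b = 0$, contradicting $c^T A^{r-1} b \neq 0$. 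The dual statement for \eqref{cqf3} follows by applying the whole argument to the transposed realization $(A^T, c, b, d)$, whose Markov parameters coincide with $c^T A^j b$.
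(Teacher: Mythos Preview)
Your verification of the column recurrence and the $r=0$ case matches the paper. The difference is in how you establish non-singularity of $Q$ for $r>0$.

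The paper argues directly on $\ker Q$: take $(\alpha;\beta)$ with $Q_{11}\alpha+Q_{12}\beta=0$ and $Q_{21}\alpha+Q_{22}\beta=0$. From the finite-zero relation $AQ_{11}+bQ_{21}=Q_{11}\Lambda$, $c^TQ_{11}=0$ (since $d=0$) one derives inductively $c^TA^kQ_{11}=0$ for $k=0,\dots,r-1$, using the Markov-parameter vanishing $c^TA^jb=0$ for $j\le r-2$. Multiplying the first kernel equation by $c^TA^k$ then gives $c^TA^kQ_{12}\beta=0$; reading these for $k=0,1,\dots$ peels off $\beta_{r+1},\beta_r,\dots,\beta_2$ one at a time (each step isolates a single coefficient times $c^TA^{r-1}b\neq0$). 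Finally $Q_{11}\alpha=0$ and full column rank of $Q_{11}$ force $\alpha=0$, $\beta_1=0$.

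Your route---show the $r+1$ proposed columns are independent and form an infinity chain, then invoke uniqueness of the infinite deflating subspace---is also correct: in Weierstrass coordinates any chain $Ev_1=0$, $Fv_k=Ev_{k+1}$ has vanishing finite components, so your vectors lie in (hence span) the $(r{+}1)$-dimensional infinite subspace, which is complementary to $\mathrm{span}\begin{pmatrix}Q_{11}\\Q_{21}\end{pmatrix}$. This is more conceptual and makes the role of the construction transparent, but it imports the uniqueness of the spectral splitting; the paper's argument is purely computational and self-contained within the identities already established in Theorem~\ref{Weierstrass}.
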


\section{$H_{\infty}$ output feedback control problem and its reformulation}\label{sec:Hinf}

\subsection{Invariant zeros of $G_{zu}$ and $G_{yw}$}\label{subsec:zero}
Applying the Laplace transform to \eqref{SISO}, we obtain 
\begin{align}\label{transform}
\begin{pmatrix}
Z(s)\\
Y(s)
\end{pmatrix}& = G(s)\begin{pmatrix}
W(s)\\
U(s)
\end{pmatrix} =\begin{pmatrix}
G_{zw}(s) & G_{zu}(s)\\
G_{yw}(s) & G_{yu}(s)
\end{pmatrix}\begin{pmatrix}
W(s)\\
U(s)
\end{pmatrix}. 
\end{align}
Here we assume that $x(0) = 0$, and each element in $G(s)$ 
can be described by $A, b_j, c_j$ and $d_{ij}$ as follows: 
\begin{align}\label{Gall}
&\begin{array}{@{}l}
 G_{zw}(s) = c_1^T(s I_n - A)^{-1}b_1 + d_{11},\quad G_{zu}(s) = c_1^T(s I_n - A)^{-1}b_2 + d_{12}, \\
 G_{yw}(s) = c_2^T(s I_n - A)^{-1}b_1 + d_{21},\quad G_{yu}(s) = c_2^T(s I_n - A)^{-1}b_2. 
\end{array}
\end{align}

By following the elimination-of-variables method \cite{Gahinet94,
Iwasaki94},  
the $H_{\infty}$ control problem \eqref{Hinf} for the generalized plant $G$ given by \eqref{SISO} 
can be formulated as follows:  
\begin{align}\label{LMI}
&\left\{
\begin{array}{cl}
\displaystyle\inf_{\gamma, X, Y} & \gamma\\
\mbox{subject to} & -\begin{pmatrix}
b_2\\
d_{12}\\
0
\end{pmatrix}^{\perp T}\begin{pmatrix}
\He(AX) & Xc_1& b_1 \\
c_1^TX&-\gamma & d_{11}\\
b_1^T & d_{11} & -\gamma 
\end{pmatrix}\begin{pmatrix}
b_2\\
d_{12}\\
0
\end{pmatrix}^{\perp} \in\mathbb{S}_+^{n+1}, \\
& -\begin{pmatrix}
c_2\\
d_{21}\\
0
\end{pmatrix}^{\perp T}\begin{pmatrix}
\He(YA) & Yb_1 & c_1 \\
b_1^TY&-\gamma & d_{11}\\
c_1^T & d_{11} & -\gamma 
\end{pmatrix}\begin{pmatrix}
c_2\\
d_{21}\\
0
\end{pmatrix}^{\perp} \in\mathbb{S}_+^{n+1}, \\
& \begin{pmatrix}
X & -I_n\\
-I_n & Y
\end{pmatrix}\in\mathbb{S}^{2n}_+. 
\end{array}
\right.
\end{align}
It should be noted that suboptimal $H_\infty$ controllers can be
reconstructed by using suboptimal solutions of this SDP.  
The elimination-of-variables method is to solve the above SDP by primal-dual interior-point methods or the ellipsoid methods.  
In the following, we denote the infimal value of the SDP \eqref{LMI} by $\gamma^*$,  
which is consistent with the notation in \eqref{Hinf}. 

Before getting into the specific treatments of \eqref{LMI},  
we give a fundamental result on its duality.  
We prove this result in \ref{subapp:proof1} on the basis of 
the discussion in \ref{app:sdp}. 
\begin{theorem}\label{thm:duality}
The following statements are equivalent:
\begin{itemize}
 \item[(I)] $(A, b_2)$ in \eqref{SISO} is stabilizable and $(A, c_2^T)$ in
	  \eqref{SISO} is detectable.  
 \item[(II)] LMI problem \eqref{LMI} is strictly feasible, i.e., there exists a solution $(\gamma, X, Y)$ such that
\begin{align*}
-\begin{pmatrix}
b_2\\
d_{12}\\
0
\end{pmatrix}^{\perp T}\begin{pmatrix}
\He(AX) & Xc_1& b_1 \\
c_1^TX&-\gamma & d_{11}\\
b_1^T & d_{11} & -\gamma 
\end{pmatrix}\begin{pmatrix}
b_2\\
d_{12}\\
0
\end{pmatrix}^{\perp} &\in\mathbb{S}_{++}^{n+1}, \\
 -\begin{pmatrix}
c_2\\
d_{21}\\
0
\end{pmatrix}^{\perp T}\begin{pmatrix}
\He(YA) & Yb_1 & c_1 \\
b_1^TY&-\gamma & d_{11}\\
c_1^T & d_{11} & -\gamma 
\end{pmatrix}\begin{pmatrix}
c_2\\
d_{21}\\
0
\end{pmatrix}^{\perp} &\in\mathbb{S}_{++}^{n+1} \mbox{ and } \\
 \begin{pmatrix}
X & -I_n\\
-I_n & Y
\end{pmatrix}&\in\mathbb{S}^{2n}_{++}. 
\end{align*}
\end{itemize}
In particular, if \eqref{LMI} is strictly feasible, then it follows
from the strong duality theorem (in 
Theorem \ref{sdt}) in \ref{app:sdp} 
that the duality gap between \eqref{LMI} and its dual
is zero, and that the dual has an optimal solution.\end{theorem}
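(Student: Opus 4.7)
The plan is to prove the two implications separately, with the bulk of the work in (I) $\Rightarrow$ (II).

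For (II) $\Rightarrow$ (I), I would argue by contrapositive using the PBH rank test. If $(A, b_2)$ is not stabilizable, there exist $\lambda \in \bbC_+ \cup \bbC_0$ and nonzero $v \in \bbC^n$ with $A^T v = \lambda v$ and $b_2^T v = 0$. The extended vector $\tilde v = (v^T, 0, 0)^T \in \bbC^{n+2}$ is then orthogonal to $(b_2^T, d_{12}, 0)^T$, so it lies in the column span of $(b_2, d_{12}, 0)^\perp$. Evaluating the projected Hermitian form in the first strict LMI on $\tilde v$ yields $-2\Re(\lambda)\, v^* X v$, which is nonpositive since $\Re(\lambda) \geq 0$ and $X \succ 0$ (the latter following from the coupling constraint). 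This contradicts the strict positive definiteness required in (II). The detectability of $(A, c_2^T)$ is handled by an entirely parallel argument using the second LMI with a right PBH eigenvector of $A$ orthogonal to $c_2$.

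For (I) $\Rightarrow$ (II), I would construct a strictly feasible triple $(\gamma, X, Y)$ explicitly. By stabilizability of $(A, b_2)$, pick $F \in \mathbb{R}^{1 \times n}$ with $A + b_2 F$ Hurwitz, and by the Lyapunov theorem choose $X_0 \succ 0$ solving $\He((A + b_2 F) X_0) \prec 0$. Dually, using detectability of $(A, c_2^T)$, pick $L \in \mathbb{R}^n$ with $A + L c_2^T$ Hurwitz and $Y_0 \succ 0$ solving $\He(Y_0 (A + L c_2^T)) \prec 0$. The key tool is the projection (Finsler) lemma, which rewrites the strict form of the first LMI as the existence of a row $(f^T, \alpha, \beta)$ with
\begin{equation*}
\begin{pmatrix} \He(AX) & Xc_1 & b_1 \\ c_1^T X & -\gamma & d_{11} \\ b_1^T & d_{11} & -\gamma \end{pmatrix} + \He\!\left( \begin{pmatrix} b_2 \\ d_{12} \\ 0 \end{pmatrix} \begin{pmatrix} f^T & \alpha & \beta \end{pmatrix} \right) \prec 0,
\end{equation*}
and analogously for the second LMI. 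Choosing $f$ so that the $(1,1)$ block collapses to $\He((A+b_2 F) X_0) \prec 0$, and then Schur-complementing with $\gamma$ large enough to dominate the remaining off-diagonal couplings, yields strict feasibility of the first LMI; the second is handled symmetrically. A common positive scaling $(X, Y) = (\tau X_0, \tau Y_0)$ with $\tau$ large then secures the coupling constraint via its Schur complement form $XY \succ I_n$.

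The main obstacle I anticipate is reconciling the three strict inequalities simultaneously: each of the first two LMIs requires $\gamma$ above a threshold depending on $(X, Y)$, while the coupling pushes $(X, Y)$ to be large. The cleanest way to close this is to observe that, after the Finsler step, the residual in each of the first two LMIs is controlled by Schur contributions that grow at most polynomially in $\tau$, whereas $\gamma$ is a free parameter; selecting $\tau$ and then $\gamma$ both sufficiently large simultaneously satisfies all three strict inequalities. The concluding claim about zero duality gap and dual attainment then follows immediately from the general SDP strong duality Theorem \ref{sdt} quoted in \ref{app:sdp}, once (II) has been established.
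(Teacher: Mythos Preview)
Your proposal is correct and, for (I)$\Rightarrow$(II), essentially matches the paper: both construct a stabilizing $F$ and detecting $L$, invoke the projection/Elimination Lemma to pass between the perpendicular form and the augmented form with a $\He$ term, and then close by taking $\gamma$ large. The paper avoids your $\tau$-scaling step by simply picking the Lyapunov solutions with $X_0\succ I_n$ and $Y_0\succ I_n$ from the outset, which makes the coupling constraint $\left(\begin{smallmatrix} X_0 & -I\\ -I & Y_0\end{smallmatrix}\right)\succ 0$ immediate; your two-stage ``first $\tau$, then $\gamma$'' argument achieves the same thing and is equally valid. (One small imprecision: the Schur-complement form of the coupling is $Y\succ X^{-1}$, not ``$XY\succ I_n$'', but your scaling argument goes through regardless.)

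For (II)$\Rightarrow$(I) the routes genuinely differ. The paper applies the Elimination Lemma in the reverse direction to produce multipliers $F_1,F_2,F_3$ (and $G_1,G_2,G_3$), reads off $-\He((A+b_2K)X)\succ 0$ with $K=F_1X^{-1}$ from the $(1,1)$ block, and concludes stabilizability by exhibiting an explicit stabilizing gain. Your PBH contrapositive is more elementary: you never construct a gain, you just show that a hypothetical unstable uncontrollable mode would furnish a vector in the range of the perpendicular matrix on which the quadratic form is nonnegative, contradicting strict definiteness. Your argument is shorter and avoids invoking the nontrivial direction of the Elimination Lemma; the paper's argument is more constructive. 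Both are standard and sound.
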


The key idea in working with the SDP \eqref{LMI} in this paper is the
treatments of the perpendicular matrices
\[
\begin{pmatrix}
b_2\\
d_{12}\\
0
\end{pmatrix}^{\perp},\  
\begin{pmatrix}
c_2\\
d_{21}\\
0
\end{pmatrix}^{\perp}. 
\]
These perpendicular matrices are usually constructed by directly working on the vectors
$(b_2^T\ d_{12}\ 0)^T$ and $(c_2^T\ d_{21}\ 0)^T$.  
In stark contrast, in this paper, 
we use the left and right null vectors associated with the invariant
zeros of the systems  
$G_{zu}$ and $G_{yw}$ given by \eqref{Gall}. 

Recall that $\lambda\in\mathbb{C}$ is an invariant zero of
$G_{zu}$ given by \eqref{Gall} if $\lambda$ is an eigenvalue of a Jordan matrix $\Lambda$ in the following Wierestrass form for $G_{zw}$
\begin{align}
  \label{zeroGzu}
    \begin{pmatrix}
  I_{n-r_1} & O\\
  O & N_1^T
  \end{pmatrix}\begin{pmatrix}
  S^T & f\\
  \hat{S}^T & \hat{f}
  \end{pmatrix}\begin{pmatrix}
  A & b_2\\
  c_1^T & d_{12}
  \end{pmatrix}& = \begin{pmatrix}
  \Lambda^T & O \\
  O & I_{r_1+1}
  \end{pmatrix}\begin{pmatrix}
  S^T & O\\
  \hat{S}^T & O
  \end{pmatrix}.
\end{align}
Here $r$ is nonnegative integer, $S\in\mathbb{C}^{n\times (n-r_1)}$, $f\in\mathbb{C}^{n-r_1}$, $\hat{S}\in\mathbb{C}^{n\times (r_1+1)}$, $\hat{f}\in\mathbb{C}^{r_1+1}$ and $N_1\in\mathbb{C}^{(r_1+1)\times (r_1+1)}$ is a nilpotent. This is obtained from \eqref{cqf3}. In particular, it follows from Theorem \ref{Weierstrass} and Remark \ref{remark1} that $r_1$ is equal to the relative degree of the transfer function of $G_{zu}$ and the matrix $\left(\begin{smallmatrix}
\hat{S}\\
\hat{f}^T
\end{smallmatrix}
\right)$ can be set as follows.
\[
\begin{pmatrix}
\hat{S}\\
\hat{f}^T
\end{pmatrix} = \left\{
\begin{array}{cl}
\begin{pmatrix}
0\\
1
\end{pmatrix} & (r_1= 0)\\
 \begin{pmatrix}
0 &c &(A^T)c &\cdots &(A^T)^{r-2}c & (A^T)^{r-1}c \\
1 &0& 0& \cdots & 0&0
\end{pmatrix} & (r_1\neq 0)
\end{array}
\right. 
\]

Similarly, recall 
that $\omega\in\mathbb{C}$ is an invariant zero of
$G_{yw}$ given by \eqref{Gall} if $\omega$ is an eigenvalue of a Jordan matrix $\Omega$ in the following ``transposed version'' of 
 Wierestrass form for $G_{yw}$
\begin{align}\label{tgdef}
 \begin{pmatrix}
  I_{n-r_2} & O\\
  O & N_2^T
  \end{pmatrix}\begin{pmatrix}
  T^T & g\\
  \hat{T}^T & \hat{g}
  \end{pmatrix}
  \begin{pmatrix}
  A^T & c_1\\
  b_2^T & d_{21}
  \end{pmatrix}  & = \begin{pmatrix}
  \Omega^T & O \\
  O & I_{r_2+1}
  \end{pmatrix}\begin{pmatrix}
  T^T & O\\
  \hat{T}^T & O
  \end{pmatrix}.
\end{align}
This is obtained from \eqref{cqf2}. Symbols $T$, $\hat{T}$, $g$, $\hat{g}$, $N_2$ and $\Omega$ in \eqref{tgdef} can be defined in a similar manner to invariant zeros of $G_{zu}$. In particular, $\left(
\begin{smallmatrix}
\hat{T}\\
\hat{g}^T
\end{smallmatrix}
\right)$ can be set as follows. 
\[
\begin{pmatrix}
\hat{T}\\
\hat{g}^T
\end{pmatrix} = \left\{
\begin{array}{cl}
\begin{pmatrix}
0\\
1
\end{pmatrix} & (r_2= 0)\\
 \begin{pmatrix}
0 &b &Ab &\cdots &A^{r-2}b &A^{r-1}b\\
1 &0& 0& \cdots & 0&0
\end{pmatrix} & (r_2\neq 0)
\end{array}
\right. 
\]
We use \eqref{zeroGzu} and \eqref{tgdef} for concise descriptions of LMI problems. Throughout the paper, we make the following assumptions 
on the generalized plant $G$ given by \eqref{SISO} and \eqref{Gall}:  
\begin{assumption}\label{A}
\begin{enumerate}
\item[(a)]\label{A1} $(A, b_2)$ is stabilizable and $(A, c_2^T)$ is detectable. 
\item[(b)]\label{A2} All invariant zeros of $G_{zu}$ are real. Similarly for $G_{yw}$. 
\end{enumerate}
\end{assumption}

Assumption \ref{A}-(a) is quite natural in controller design since otherwise, there are no internally stabilizing controllers.  
On the other hand, 
we proceed with the discussions in each section under Assumption \ref{A}-(b), but this is just for simplicity.  
Assumption \ref{A}-(b) implies that $\left(\begin{smallmatrix}S\\ f^T\end{smallmatrix}\right)$ in
\eqref{zeroGzu} and $\left(\begin{smallmatrix}T\\ g^T\end{smallmatrix}\right)$ in \eqref{tgdef} are real and hence
facilitates the descriptions of matrix inequality conditions. 
Under Assumption \ref{A} and from Theorem \ref{Weierstrass} and Remark \ref{remark1}, we can readily obtain 
the next results with respect to 
the invariant zeros of $G_{zu}$ and $G_{yw}$.  
\begin{lemma}\label{lem:gplant}
\begin{enumerate}
\item\label{F9} Let $r_1\ (0\le r_1\le n)$ be the relative degree
   of $G_{zu}$. 
   Then $G_{zu}$ has $(n-r_1)$ real  invariant zero(s). 
   The left null vectors $\left(
   \begin{smallmatrix}
   S\\
   f^T
   \end{smallmatrix}
   \right)$ associated with all the invariant zeros of $G_{zu}$ is of full column rank. In particular, $S$ is also of full column rank. Furthermore, if $d_{12}\neq 0$, then $S$ is square and thus is non-singular. 
\item\label{F10} 
Let $r_2\ (0\le r_2\le n)$ be the relative degree
   of $G_{yw}$. 
   Then $G_{yw}$ has $(n-r_2)$ real invariant zero(s).  
   The right null vectors $\left(
   \begin{smallmatrix}
   T\\
   g^T
   \end{smallmatrix}
   \right)$ associated with all the invariant zeros of $G_{yw}$ is of full column rank. In particular, $T$ is also of full column rank. Furthermore, if $d_{21}\neq 0$, then $T$ is square and thus is non-singular. 
\end{enumerate}
\end{lemma}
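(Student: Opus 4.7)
The plan is to derive both assertions as direct applications of Theorem \ref{Weierstrass} and Remark \ref{remark1} to the realizations $(A, b_2, c_1^T, d_{12})$ of $G_{zu}$ and $(A^T, c_2, b_1^T, d_{21})$ of $G_{yw}$, after matching the generic symbols in the Weierstrass forms \eqref{cqf2}--\eqref{cqf3} to the named blocks $S, f, \hat{S}, \hat{f}$ and $T, g, \hat{T}, \hat{g}$ appearing in \eqref{zeroGzu} and \eqref{tgdef}.

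For assertion \ref{F9}, first I would observe that \eqref{zeroGzu} is precisely the transposed Weierstrass form \eqref{cqf3} applied to $(A, b_2, c_1^T, d_{12})$. Thus $r_1$ plays the role of $r$, the Jordan matrix $\Lambda$ collects the invariant zeros of $G_{zu}$, and the columns of $\left(\begin{smallmatrix}S\\ f^T\end{smallmatrix}\right)$ are the (generalized) left null vectors associated with those invariant zeros. By Remark \ref{remark1}-\ref{b1}, $G_{zu}$ has exactly $(n-r_1)$ invariant zeros counted with multiplicity; under Assumption \ref{A}-(b) they are all real, so the Jordan matrix $\Lambda$ and hence $\left(\begin{smallmatrix}S\\ f^T\end{smallmatrix}\right)$ can be chosen real.

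Next, for the rank assertions, I would invoke the non-singularity of the transformation matrix in \eqref{cqf3}: since it is invertible, its leftmost $(n-r_1)$ columns, which become $\left(\begin{smallmatrix}S\\ f^T\end{smallmatrix}\right)$, are linearly independent, establishing full column rank of this combined matrix. That $S$ alone has full column rank is exactly the content of Remark \ref{remark1}-\ref{b3} transposed to the dual setting. Finally, when $d_{12}\neq 0$, Theorem \ref{Weierstrass}-\ref{a3} forces $r_1=0$, so $S\in\mathbb{R}^{n\times n}$ is square and of full column rank, hence non-singular.

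Assertion \ref{F10} is proved by the same template, only now using the direct Weierstrass form \eqref{cqf2} as recorded in \eqref{tgdef} for the realization $(A^T, c_2, b_1^T, d_{21})$ of $G_{yw}$; the roles of left and right null vectors are swapped, but the counting of invariant zeros, the full column rank conclusions for $\left(\begin{smallmatrix}T\\ g^T\end{smallmatrix}\right)$ and $T$, and the consequence ``$d_{21}\neq 0\Rightarrow r_2=0\Rightarrow T$ non-singular'' go through verbatim. The main ``obstacle'' here is really just bookkeeping, namely maintaining a clean dictionary between the generic symbols in Theorem \ref{Weierstrass} and the specially-named blocks in \eqref{zeroGzu}--\eqref{tgdef}; once this dictionary is fixed, all four claims fall out immediately without additional computation.
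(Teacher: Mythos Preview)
Your proposal is correct and follows exactly the approach the paper indicates: the paper simply states that Lemma \ref{lem:gplant} follows ``readily'' from Theorem \ref{Weierstrass} and Remark \ref{remark1} under Assumption \ref{A}, and you have spelled out precisely how those ingredients combine (the counting of zeros via Remark \ref{remark1}-\ref{b1}, full column rank of $S$ via Remark \ref{remark1}-\ref{b3}, and $r_1=0$ when $d_{12}\neq 0$ via Theorem \ref{Weierstrass}-\ref{a3}). One minor quibble: \eqref{tgdef} has the structural shape of \eqref{cqf3} rather than \eqref{cqf2}, though the paper itself writes ``This is obtained from \eqref{cqf2}'' (meaning \eqref{cqf2} applied to the dual system and then transposed); this is exactly the bookkeeping ambiguity you already flagged, and it does not affect the argument.
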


Finally, we partition $\Lambda$ and $\Omega$ into the parts of 
stable and unstable invariant zeros as follows:
\begin{align}\label{Partition}
\Lambda &= \begin{pmatrix}
\Lambda_- &* \\
O_{k_1 \times (n-r_1-k_1)}& \Lambda_+
\end{pmatrix} \mbox{ and } \Omega = \begin{pmatrix}
\Omega_- &*\\
O_{k_2 \times (n-r_2-k_2)} & \Omega_+
\end{pmatrix}.  
\end{align}
Here, $k_1$ (resp. $k_2$) denotes the number of unstable invariant zeros of 
$G_{zu}$ (resp. $G_{yw}$), 
$\Lambda_+$ (resp. $\Omega_+$) is a Jordan matrix with unstable invariant 
zeros of $G_{zu}$ (resp. $G_{yw}$), and 
$\Lambda_-$ (resp. $\Omega_-$) is a Jordan matrix with stable invariant 
zeros of $G_{zu}$ (resp. $G_{yw}$).  
We also partition $S, f$ and $T, g$ conformably as follows
\begin{align}\label{Partition.vec}
\begin{pmatrix}
S^T_\circ& f_\circ
\end{pmatrix}\begin{pmatrix}
A & b_2\\
c_1^T & d_{12}
\end{pmatrix} = \Lambda_\circ^T\begin{pmatrix}
S^T_\circ& 0
\end{pmatrix} \mbox{ and }\begin{pmatrix}
T^T_\circ& g_\circ
\end{pmatrix}\begin{pmatrix}
A^T & c_2\\
b_1^T & d_{21}
\end{pmatrix} = \Omega_\circ^T\begin{pmatrix}
T^T_\circ& 0
\end{pmatrix}
\end{align}
where $\circ = +$ or $-$. 
When we deal with invariant zeros on the imaginary axis explicitly, 
we use another partition of $\Lambda$ and $\Omega$, 
see Section \ref{sec:case3}. 

\subsection{Simplification of the LMI problem via invariant zeros and null vectors}\label{subsec:reduction}

We assume that $d_{12}\neq 0$ and $d_{21}\neq 0$ in this subsection.  
Since this is also assumed in Sections \ref{sec:case1}, \ref{sec:case2}
and \ref{sec:case3}, 
the simplification in this subsection is valid except for Section
\ref{sec:case4}. 

If $d_{12}\neq 0$ and $d_{21}\neq 0$, 
it follows from Lemma \ref{lem:gplant} that 
both $S$ and $T$ in \eqref{zeroGzu} and \eqref{tgdef} are non-singular. 
Using this property, we have
\[
\begin{pmatrix}
b_2\\
d_{12}\\
0
\end{pmatrix}^{\perp} = \begin{pmatrix}
S & 0\\
f^T & 0\\
0^T & 1
\end{pmatrix} \mbox{ and }\begin{pmatrix}
c_2\\
d_{21}\\
0
\end{pmatrix}^{\perp} = \begin{pmatrix}
T & 0\\
g^T & 0\\
0^T & 1
\end{pmatrix}. 
\]
By using this fact, we have 
\begin{align*}
&-\begin{pmatrix}
b_2\\
d_{12}\\
0
\end{pmatrix}^{\perp T}\begin{pmatrix}
\He(AX) & Xc_1 & b_1 \\
c_1^TX&-\gamma &d_{11} \\
b_1^T & d_{11} & -\gamma 
\end{pmatrix}\begin{pmatrix}
b_2\\
d_{12}\\
0
\end{pmatrix}^{\perp}\\ 
=& - \begin{pmatrix}
\He(\Lambda^T S^TXS) -\gamma ff^T & *\\
b_1^TS + d_{11}f^T & -\gamma
\end{pmatrix}, \\
&-\begin{pmatrix}
c_2\\
d_{21}\\
0
\end{pmatrix}^{\perp T}\begin{pmatrix}
\He(YA) & Yb_1 & c_1 \\
b_1^TY&-\gamma & d_{11}\\
c_1^T & d_{11} & -\gamma 
\end{pmatrix}\begin{pmatrix}
c_2\\
d_{21}\\
0
\end{pmatrix}^{\perp}\\
=& -\begin{pmatrix}
\He(\Omega^TT^TYT) -\gamma gg^T & *\\
c_1^TT + d_{11}g^T & -\gamma
\end{pmatrix}, 
\end{align*}
where $*$ indicates the transpose of the lower triangular part. In addition, since $S$ and $T$ are non-singular, we have 
\[
\begin{pmatrix}
X & -I_n\\
-I_n & Y
\end{pmatrix}\in\mathbb{S}^{2n}_+ \iff \begin{pmatrix}
S^TXS & -S^TT\\
-T^TS & T^TYT
\end{pmatrix}\in\mathbb{S}^{2n}_+. 
\]
Hence, by replacing $S^TXS$ and $T^TYT$ by $\hat{X}$ and $\hat{Y}$,
respectively, the SDP \eqref{LMI} can be reduced to
\begin{align}\label{LMI2}
&\left\{
\begin{array}{cl}
\displaystyle\inf_{\gamma, \hat{X}, \hat{Y}} & \gamma\\
\mbox{subject to} & - \begin{pmatrix}
\He(\Lambda^T \hat{X}) -\gamma ff^T & *\\
h_1^T & -\gamma
\end{pmatrix} \in\mathbb{S}_+^{n+1}, \begin{pmatrix}
\hat{X} & *\\
-J& \hat{Y}
\end{pmatrix}\in\mathbb{S}^{2n}_+,\\
& - \begin{pmatrix}
\He(\Omega^T \hat{Y}) -\gamma gg^T & *\\
h_2^T & -\gamma
\end{pmatrix} \in\mathbb{S}_+^{n+1}, 
\end{array}
\right.
\end{align}
where $J:= T^TS$, $h_1:=S^Tb_1+d_{11}f$ and $h_2:= T^Tc_1+d_{11}g$. 

In Sections \ref{sec:case1}, \ref{sec:case2}, \ref{sec:case3} and \ref{sec:case4}, 
we analyze the infimal value $\gamma^*$ given by \eqref{LMI2} separately 
according to the following four cases: 
\begin{description}
\item[Case 1] (Section \ref{sec:case1}) Both $d_{12}$ and $d_{21}$ are nonzero, 
   and all the invariant zeros of $G_{zu}$ and $G_{yw}$ are unstable, 
   but not on the imaginary axis. 
\item[Case 2] (Section \ref{sec:case2}) Both $d_{12}$ and $d_{21}$ are nonzero, 
   and at least one of the invariant zeros of $G_{zu}$ or
   $G_{yw}$ is stable, 
   but all the unstable invariant zeros are not on the imaginary axis. 
\item[Case 3] (Section \ref{sec:case3}) Both $d_{12}$ and $d_{21}$ are nonzero, 
    and at least one of the invariant zeros in $G_{zu}$ or
    $G_{yw}$ 
    exists on the imaginary axis. 
\item[Case 4] (Section \ref{sec:case4}) At least one of $d_{12}$ and $d_{21}$ is
   zero. 
   In this case, an infinite invariant zero exists in $G_{zu}$ or $G_{yw}$. 
\end{description}

\section{Analysis of Case 1}\label{sec:case1}
In this section, we assume that all invariant zeros of $G_{zu}$ and
$G_{uw}$ given by \eqref{Gall} are (strictly) positive. 
This is represented equivalently by $\Lambda = \Lambda_+$ and $\Omega = \Omega_+$. 
Then we have $S = S_+, f = f_+$ and $T=T_+$, $g=g_+$, and thus 
$J = J_+ := T_+^TS_+$, 
$h_1 = h_{1+} := S_+^Tb_1+d_{11}f_+$ and 
$h_2 = h_{2+}: = T_+^Tc_1+d_{11}g_+$. 

Under Assumption \ref{A}, 
Theorem \ref{thm:duality} ensures the existence of an optimal solution 
to the dual of \eqref{LMI2}, 
while the following lemma ensures the existence of 
an optimal solution to \eqref{LMI2}. 
We give a proof of Lemma \ref{lem:exist} in 
\ref{subapp:proof2}. 
\begin{lemma}\label{lem:exist}
LMI problem \eqref{LMI2} has an optimal solution. 
\end{lemma}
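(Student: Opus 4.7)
The strategy is a standard compactness argument on a minimizing sequence. Specifically, I would show that (i) problem \eqref{LMI2} is feasible with $\gamma^*$ finite and bounded below, (ii) any feasible triple $(\gamma,\hat{X},\hat{Y})$ with $\gamma$ in a bounded range satisfies an a priori bound on $\|\hat{X}\|$ and $\|\hat{Y}\|$, and then (iii) extract a convergent subsequence from a minimizing sequence and pass to the limit inside the closed feasible set.

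For feasibility, note that because $d_{12}\neq 0$ and $d_{21}\neq 0$ in Case 1, Lemma \ref{lem:gplant} gives that $S=S_+$ and $T=T_+$ are nonsingular, so the change of variables $\hat{X}=S^TXS$, $\hat{Y}=T^TYT$ is a linear bijection from the feasible set of \eqref{LMI} onto that of \eqref{LMI2}. Under Assumption \ref{A}, Theorem \ref{thm:duality} supplies a strictly feasible point of \eqref{LMI}, which yields one for \eqref{LMI2}; in particular $\gamma^*<\infty$. Looking at the $(2,2)$-entry of the first LMI constraint forces $\gamma\geq 0$, so $\gamma^*\geq 0$ as well.

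The heart of the proof is the a priori bound, which exploits the Case 1 hypothesis that every invariant zero is strictly positive, i.e.\ $\Lambda=\Lambda_+$ and $\Omega=\Omega_+$ are anti-Hurwitz. The second LMI in \eqref{LMI2} forces $\hat{X}\succeq 0$, and extracting the $(1,1)$ block of the first LMI gives
\[
\He(\Lambda_+^T\hat{X}) \,\preceq\, \gamma\, f_+f_+^T.
\]
Since $\Lambda_+$ is anti-Hurwitz, the Lyapunov operator $\mathcal{L}_\Lambda(Z):=\He(\Lambda_+^T Z)$ is a linear bijection on $\mathbb{S}^n$ whose inverse
\[
\mathcal{L}_\Lambda^{-1}(Q) \,=\, \int_0^\infty e^{-\Lambda_+^T t}\,Q\,e^{-\Lambda_+ t}\,dt
\]
maps $\mathbb{S}^n_+$ into $\mathbb{S}^n_+$. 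Applying $\mathcal{L}_\Lambda^{-1}$ to $\gamma f_+f_+^T-\mathcal{L}_\Lambda(\hat{X})\succeq 0$ and combining with $\hat{X}\succeq 0$ yields
\[
0 \,\preceq\, \hat{X} \,\preceq\, \gamma\,\mathcal{L}_\Lambda^{-1}(f_+f_+^T),
\]
so $\|\hat{X}\|\leq\gamma\,\|\mathcal{L}_\Lambda^{-1}(f_+f_+^T)\|$. The analogous argument using $\Omega_+$ gives $\|\hat{Y}\|\leq\gamma\,\|\mathcal{L}_\Omega^{-1}(g_+g_+^T)\|$. Taking a minimizing sequence $(\gamma_k,\hat{X}_k,\hat{Y}_k)$ with $\gamma_k\downarrow\gamma^*$, the sequence is bounded, Bolzano--Weierstrass extracts a convergent subsequence, and closedness of the LMI constraints guarantees the limit is feasible and optimal.

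The only non-routine step is the a priori bound $\hat{X}\preceq\gamma\mathcal{L}_\Lambda^{-1}(f_+f_+^T)$, which is where the Case 1 assumption is decisive: if any invariant zero were stable or on the imaginary axis, $\mathcal{L}_\Lambda$ would either fail to preserve the PSD cone or fail to be invertible, and $\hat{X}$ could escape to infinity along a minimizing sequence. This is precisely why an optimal solution need not exist in Cases 2--4, motivating the facial-reduction approach used there.
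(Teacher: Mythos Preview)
Your proof is correct, but it takes a genuinely different route from the paper. The paper argues via duality: it shows that the dual of \eqref{LMI2} is strictly feasible by invoking the Farkas-type alternative in Theorem~\ref{thm:fr}, checking that the homogeneous certificate system
\[
-\He(\Lambda_+^T X)\succeq 0,\quad -\He(\Omega_+^T Y)\succeq 0,\quad X,Y\succeq 0,\quad (X,Y)\neq 0
\]
has no solution (because $-\Lambda_+,-\Omega_+$ are Hurwitz, the Lyapunov representation forces $X\preceq 0$, hence $X=0$, and likewise $Y=0$). Strict feasibility of the dual together with boundedness of its optimal value then yields an optimal solution of the primal by the strong duality theorem (Theorem~\ref{sdt}).

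Your approach is a direct compactness argument on the primal: the anti-Hurwitz property of $\Lambda_+$ makes the Lyapunov operator $\mathcal{L}_\Lambda$ a PSD-preserving bijection, and applying $\mathcal{L}_\Lambda^{-1}$ to $\gamma f_+f_+^T-\He(\Lambda_+^T\hat X)\succeq 0$ gives the explicit two-sided bound $0\preceq\hat X\preceq\gamma\,\mathcal{L}_\Lambda^{-1}(f_+f_+^T)$. Both proofs hinge on exactly the same structural fact, just deployed on opposite sides of the duality. Your argument is more self-contained (no SDP duality machinery needed) and yields an explicit a~priori bound on any minimizer; the paper's argument meshes more naturally with the dual-based facial-reduction analysis of Cases~2--4 that follows. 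One small caveat on your closing remark: it is not that optimal solutions of \eqref{LMI2} \emph{fail} to exist in Cases~2--4, only that your compactness bound breaks down there, which is why the paper switches to the dual.
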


In addition, we can obtain the next result with respect to LMI problem 
\eqref{LMI2}. We give a proof in \ref{subapp:lemma5}. 
\begin{lemma}\label{lem:zerocase1} 
The optimal value $\gamma^*$ of LMI problem \eqref{LMI2} is zero 
if and only if $h_{1+} = 0$, $h_{2+} = 0$ and $J_+ = O_{n\times n}$.  
\end{lemma}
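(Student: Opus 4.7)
The plan is to prove the two implications separately. The reverse direction is by inspection: assuming $h_{1+}=0$, $h_{2+}=0$ and $J_+=O$, the candidate $(\gamma,\hat X,\hat Y)=(0,O,O)$ makes all three block matrices in \eqref{LMI2} equal to the zero matrix, so it is feasible and $\gamma^*\le 0$. To see that $\gamma^*\ge 0$, observe that on any feasible triple the $(2,2)$ scalar block of the first LMI constraint is exactly $\gamma$, so the PSD condition forces $\gamma\ge 0$. Hence $\gamma^*=0$.

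For the forward direction, suppose $\gamma^*=0$. By Lemma~\ref{lem:exist} the infimum is attained, so I would fix an optimal triple $(0,\hat X,\hat Y)$. With $\gamma=0$ the first LMI constraint reduces to
\[
\begin{pmatrix}-\He(\Lambda_+^T\hat X) & -h_{1+}\\ -h_{1+}^T & 0\end{pmatrix}\succeq 0.
\]
Since a positive semidefinite block matrix with vanishing lower-right scalar block must have vanishing off-diagonal column (take $v$ in the lower component and let it scale freely), this forces $h_{1+}=0$ and $\He(\Lambda_+^T\hat X)\preceq 0$. The same reasoning applied to the second LMI yields $h_{2+}=0$ and $\He(\Omega_+^T\hat Y)\preceq 0$. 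The third LMI in \eqref{LMI2} already implies $\hat X\succeq 0$ and $\hat Y\succeq 0$ as diagonal blocks of a positive semidefinite matrix.

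The key step is then a Lyapunov-type argument specialized to the diagonal case. Under Assumption~\ref{A}-(b) together with the simplifying hypothesis that invariant zeros are distinct, $\Lambda_+$ may be taken diagonal with strictly positive real entries $\lambda_1,\dots,\lambda_{k_1}$. Reading off the $(i,i)$ diagonal entry of $\Lambda_+\hat X+\hat X\Lambda_+\preceq 0$ gives $2\lambda_i\hat X_{ii}\le 0$, which combined with $\hat X\succeq 0$ forces $\hat X_{ii}=0$. A positive semidefinite matrix whose diagonal vanishes is itself zero, so $\hat X=O$, and by the same argument $\hat Y=O$. Substituting these back into the third LMI leaves
\[
\begin{pmatrix}O & -J_+^T\\ -J_+ & O\end{pmatrix}\succeq 0,
\]
which immediately yields $J_+=O$.

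There is no real obstacle here; the argument is a direct application of Schur-complement reasoning together with the trivial instance of the Lyapunov inequality available once $\Lambda_+$ (and $\Omega_+$) are diagonal. The only subtle point is the appeal to Lemma~\ref{lem:exist}, which lets us evaluate the LMI constraints at the attained optimum $\gamma=0$ rather than work along a limiting sequence; without it one would need to argue via a compactness or normalization argument to extract a candidate witness for the equalities $h_{1+}=h_{2+}=0$ and $J_+=O$.
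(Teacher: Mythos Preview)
Your proof is correct and follows the same overall skeleton as the paper's: use Lemma~\ref{lem:exist} to pass to an attained optimum $(0,\hat X,\hat Y)$, read off $h_{1+}=h_{2+}=0$ from the vanishing $(2,2)$ scalar block, deduce $\hat X=\hat Y=O$ from a Lyapunov-type argument combined with $\hat X,\hat Y\succeq 0$, and finally get $J_+=O$ from the coupling LMI.

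The one genuine difference is in the Lyapunov step. You exploit the paper's simplifying hypothesis that the invariant zeros are real and \emph{distinct}, so that $\Lambda_+$ is diagonal, and then read off $2\lambda_i\hat X_{ii}\le 0$ entrywise. The paper instead writes $\He((-\Lambda_+)^T\hat X)=\tilde X\succeq 0$ as a Lyapunov equation for the Hurwitz matrix $-\Lambda_+$ and invokes the integral representation $\hat X=-\int_0^\infty e^{-\Lambda_+^T t}\tilde X e^{-\Lambda_+ t}\,dt\preceq 0$, which together with $\hat X\succeq 0$ forces $\hat X=O$. Your route is shorter and entirely elementary, but it leans on diagonalizability of $\Lambda_+$; the paper's argument is agnostic to the Jordan structure and would survive dropping the distinctness assumption. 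Both are valid in the stated setting.
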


Since we have already dealt with the case $\gamma^*=0$ explicitly in Lemma \ref{lem:zerocase1}, 
we assume that the optimal value $\gamma^*$ of \eqref{LMI2} is 
(strictly) positive in the remainder of this section. 
Then we can apply the Schur complement to the first and second LMIs in
\eqref{LMI2} 
and obtain 
\begin{align*}
\He((-\Lambda_+)^T X) + \gamma f_+f_+^T- h_{1+}h_{1+}^T/\gamma &\in\mathbb{S}^{n}_+ \mbox{ and }\\
\He((-\Omega_+)^T Y) +\gamma g_+g_+^T- h_{2+}h_{2+}^T/\gamma &\in\mathbb{S}^{n}_+ 
\end{align*}
for $\gamma >0$.  
It follows that LMI problem \eqref{LMI2} can be reformulated as
\begin{align}\label{LMI3}
&\left\{
\begin{array}{cl}
\displaystyle\inf_{\gamma, X, Y, \tilde{X}, \tilde{Y}} & \gamma\\
\mbox{subject to} 
& \He((-\Lambda_+)^T X) + 
\gamma f_+f_+^T- h_{1+}h_{1+}^T/\gamma -\tilde{X} = O_n, 
\tilde{X}\in\mathbb{S}^n_+, \\
& \He((-\Omega_+)^T Y) + 
\gamma g_+g_+^T- h_{2+}h_{2+}^T/\gamma -\tilde{Y} = O_n, 
\tilde{Y}\in\mathbb{S}^n_+, \\
& \begin{pmatrix}
X & -J_+^T\\
-J_+& Y
\end{pmatrix}\in\mathbb{S}^{2n}_+,\quad  
\gamma >0.
\end{array}
\right.
\end{align}
Since both $(-\Lambda_+)^T$ and $(-\Omega_+)^T$ are Hurwitz stable,
and since the first and second equalities can be seen as the Lyapunov
equations, 
we can solve them explicitly as follows:
\begin{align}
\label{Xhat}X &= 
\int_{0}^{\infty}\exp(-\Lambda_+^T t)
\left( \gamma f_+f_+^T- h_{1+}h_{1+}^T/\gamma 
-\tilde{X}\right)\exp(-\Lambda_+ t)\, dt, \\
\label{Yhat}Y &= 
\int_{0}^{\infty}\exp(-\Omega_+^T t)
\left( \gamma g_+g_+^T- h_{2+}h_{2+}^T/\gamma 
-\tilde{Y}\right)\exp(-\Omega_+ t)\, dt. 
\end{align}
See e.g. \cite[Remark, page 78]{Boyd94} for this explicit form of the Lyapunov
equations. 
In relation to \eqref{Xhat} and \eqref{Yhat}, 
we define $F_+, G_+, H_{1+}, H_{2+}\in\mathbb{S}^n$ by 
\begin{align}\label{FGH}
&\left\{
\begin{array}{ccl}
F_+ &=& \displaystyle\int_{0}^{\infty}\exp(-\Lambda_+^T t)
f_+f_+^T\exp(-\Lambda_+ t)\, dt, \\
G_+ &=& \displaystyle\int_{0}^{\infty}\exp(-\Omega_+^T t)
g_+g_+^T\exp(-\Omega_+ t)\, dt, \\
H_{1+} &=& \displaystyle\int_{0}^{\infty}\exp(-\Lambda_+^T t)
h_{1+}h_{1+}^T\exp(-\Lambda_+ t)\, dt, \\
H_{2+} &=& \displaystyle\int_{0}^{\infty}\exp(-\Omega^T_+ t)
h_{2+}h_{2+}^T\exp(-\Omega_+ t)\, dt. 
\end{array}
\right. 
\end{align}
We remark that $F_+, G_+, H_{1+}, H_{2+}$ are positive semidefinite. 
In particular, $F_+$ and $G_+$ are positive definite because 
both pairs $(\Lambda_+, f_+)$ and $(\Omega_+, g_+)$ are 
controllable under Assumption \ref{A1}. 
This is proved in Lemma \ref{lem:cntobs}
of \ref{subapp:proof3}. 

By using $F_+, G_+, H_{1+}, H_{2+}\in\mathbb{S}^n$, 
we can rewrite \eqref{LMI3} as
\begin{align}\label{LMI4}
&\left\{
\begin{array}{cl}
\displaystyle\inf_{\gamma, \tilde{X}, 
\tilde{Y}, \bar{X}, \bar{Y}} & \gamma\\
\mbox{subject to} 
& \hat{X} = \displaystyle\int_0^{\infty}
\exp(-\Lambda_+^T t)\tilde{X}\exp(-\Lambda_+ t)\, dt, \tilde{X}\in\mathbb{S}^n_+,\\
&\hat{Y} = \displaystyle\int_0^{\infty}
\exp(-\Omega_+^T t)\tilde{Y}\exp(-\Omega_+ t)\, dt, \tilde{Y}\in\mathbb{S}^n_+, \\
& \begin{pmatrix}
\gamma F_+ -\displaystyle\frac{1}{\gamma}H_{1+} -\hat{X} & -J_+^T\\
-J_+& \gamma G_+ -\displaystyle\frac{1}{\gamma}H_{2+} -\hat{Y}
\end{pmatrix}\in\mathbb{S}^{2n}_+, 
\gamma >0.  
\end{array}
\right.
\end{align}
Then we can readily prove that \eqref{LMI4} is 
equivalent to the next LMI problem :
\begin{align}\label{LMI5}
&\displaystyle\inf
\left\{ \gamma : \begin{pmatrix}
\gamma F_+ -\displaystyle\frac{1}{\gamma}H_{1+} & -J_+^T\\
-J_+& \gamma G_+ -\displaystyle\frac{1}{\gamma}H_{2+} 
\end{pmatrix}\in\mathbb{S}^{2n}_+, \gamma >0
\right\}. 
\end{align}
In fact, it is clear that if \eqref{LMI4} with the objective value $\gamma=\gamma_0$
is feasible by $(\gamma_0, \tilde{X}, \tilde{Y}, \hat{X},\hat{Y})$, 
then \eqref{LMI5} with $\gamma=\gamma_0$ 
is also feasible since $\hat{X}$ and $\hat{Y}$ 
are both positive semidefinite. 
On the other hand, if \eqref{LMI5} is feasible with the objective value 
$\gamma=\gamma_0$, then \eqref{LMI4} with $\gamma=\gamma_0$
is also feasible by
$(\gamma_0, \tilde{X}, \tilde{Y}, \hat{X}, \hat{Y})=(\gamma_0, O_n, O_n, O_n,O_n)$.  

To summarize the results in this section, 
we arrive at the next theorem 
that is the first main result of this paper.  
\begin{theorem}\label{thm:case1}
Let us consider Case \ref{Case1} stated at the final part of Section \ref{sec:Hinf}. 
Then the optimal value $\gamma^*$ of LMI problem \eqref{LMI2} is equal 
to the maximum eigenvalue of the matrix $E\in\mathbb{S}^{4n}$ defined by
\begin{align}\label{gammacase1}
E&:= \begin{pmatrix}
O& F_+^{-1/2}J_+^TG_+^{1/2} & F_+^{-1/2}H^{1/2}_{1+} & O \\
G_+^{-1/2}J_+F_+^{-1/2} & O& O &G_+^{-1/2}H^{1/2}_{2+}\\
H^{1/2}_{1+}F^{-1/2}_+ & O & O &O \\
O & H^{1/2}_{2+}G_+^{-1/2} & O & O
\end{pmatrix}. 
\end{align}
Here, $F_+, G_+\in\bbS_{++}^n$ and $H_{1+}, H_{2+}\in\bbS_+^n$ are given by
 \eqref{FGH}.  
Moreover, an optimal solution $(\gamma, X, Y)$ of the SDP
\eqref{LMI2} can be given explicitly by 
\[
\gamma = \gamma^*,\quad 
X = \gamma F_+ - \frac{1}{\gamma}H_{1+},\quad 
Y = \gamma G_+ - \frac{1}{\gamma}H_{2+}. 
\]
\end{theorem}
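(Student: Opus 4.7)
My plan is to prove Theorem \ref{thm:case1} by taking the reduced problem \eqref{LMI5}, already established in the excerpt, and showing that its feasibility set (in $\gamma>0$) is exactly $[\lambda_{\max}(E),\infty)$. The key tactic is to reshape the $2n\times 2n$ LMI in \eqref{LMI5} into the form $\gamma I_{4n}-E'\succeq O$ for a symmetric $E'$ that is orthogonally similar to $E$, so that the minimum admissible $\gamma$ is precisely $\lambda_{\max}(E)=\lambda_{\max}(E')$.

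The reshape proceeds in two steps. First, I would use the Schur complement identity $\gamma M - (1/\gamma)H^{1/2}H^{1/2}\succeq O \iff \bigl(\begin{smallmatrix}\gamma M & H^{1/2}\\ H^{1/2}&\gamma I\end{smallmatrix}\bigr)\succeq O$ (valid for $\gamma>0$), applied simultaneously to the two diagonal blocks $\gamma F_+-(1/\gamma)H_{1+}$ and $\gamma G_+-(1/\gamma)H_{2+}$; this lifts \eqref{LMI5} to a $4n\times 4n$ block LMI with $\gamma F_+,\gamma G_+,\gamma I_n,\gamma I_n$ on the diagonal and $-J_+^T,H_{1+}^{1/2},H_{2+}^{1/2}$ populating off-diagonal blocks. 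Second, because $F_+,G_+\in\bbS^n_{++}$ by controllability of $(\Lambda_+,f_+)$ and $(\Omega_+,g_+)$ (Lemma \ref{lem:cntobs} in the appendix), I would perform a nonsingular congruence by $\diag(F_+^{-1/2},G_+^{-1/2},I_n,I_n)$. The diagonal blocks become $\gamma I_n$ throughout, so the constraint is $\gamma I_{4n}\succeq \tilde E$ for a symmetric $\tilde E$ whose off-diagonal blocks differ from those of $E$ only by signs. A final orthogonal similarity by $D=\diag(I_n,I_n,-I_n,-I_n)$ aligns the signs and shows $D\tilde E D=E$; therefore $\lambda_{\max}(\tilde E)=\lambda_{\max}(E)$ and the minimum feasible $\gamma$ in \eqref{LMI5} is exactly $\lambda_{\max}(E)$.

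To handle the explicit optimizer of \eqref{LMI2}, I would work backwards through the equivalences \eqref{LMI2}$\leftrightarrow$\eqref{LMI3}$\leftrightarrow$\eqref{LMI4}$\leftrightarrow$\eqref{LMI5} at the achieved value $\gamma=\gamma^*$. Setting $\tilde X=\tilde Y=O$ in \eqref{LMI3} forces the Lyapunov-equation solution \eqref{Xhat}--\eqref{Yhat} to collapse to $X=\gamma^* F_+-H_{1+}/\gamma^*$ and $Y=\gamma^* G_+-H_{2+}/\gamma^*$; verifying the first two constraints reduces to the defining integral identities for $F_+,G_+,H_{1+},H_{2+}$ in \eqref{FGH} (each is a standard Lyapunov identity), and the coupling constraint $\bigl(\begin{smallmatrix}X & -J_+^T\\ -J_+ & Y\end{smallmatrix}\bigr)\succeq O$ becomes precisely the matrix in \eqref{LMI5} evaluated at $\gamma^*$, which is PSD by construction of $\gamma^*$. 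The cases $\gamma^*=0$ (handled by Lemma \ref{lem:zerocase1}) and $\gamma^*>0$ are treated separately so that the Schur-complement and Lyapunov manipulations, which require $\gamma>0$, are legitimate.

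The main obstacle I anticipate is the sign bookkeeping when matching the signs of the off-diagonal blocks of the lifted matrix to the specific positive pattern in $E$ as written in \eqref{gammacase1}; one must choose the conjugating diagonal sign matrix $D$ so that \emph{every} off-diagonal block of $\tilde E$ flips consistently to yield $E$, and verify $D^2=I_{4n}$ (so the similarity is orthogonal and preserves eigenvalues). A secondary subtlety is legitimizing all Schur-complement steps at the boundary $\gamma=\gamma^*$: the intermediate matrices need only be positive semidefinite, not definite, so strict-vs-weak Schur complement variants must be invoked correctly (this is routine since the $(n,n)$ or larger diagonal blocks $\gamma I_n$ are positive definite for $\gamma>0$). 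Once these points are resolved, the chain of equivalences yields both the formula $\gamma^*=\lambda_{\max}(E)$ and the stated explicit optimal $(X,Y)$.
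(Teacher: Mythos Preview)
Your proposal is correct and follows essentially the same approach as the paper's own proof: both reduce \eqref{LMI5} to $\gamma I_{4n}-E\succeq O$ via a congruence by $\diag(F_+^{-1/2},G_+^{-1/2})$ together with a Schur-complement lift that factors $H_{i+}=H_{i+}^{1/2}H_{i+}^{1/2}$. The only cosmetic difference is the order of these two operations---the paper applies the congruence first and then lifts (which produces $E$ directly with the right signs), whereas you lift first and then congrue, necessitating the harmless orthogonal sign correction $D=\diag(I_n,I_n,-I_n,-I_n)$; your treatment of the $\gamma^*=0$ boundary case via Lemma~\ref{lem:zerocase1} and of the explicit optimizer via \eqref{Xhat}--\eqref{Yhat} with $\tilde X=\tilde Y=O$ also matches the paper.
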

\begin{proof}
To prove $\gamma^* = \hat{\gamma}:=\lambda_{\max}(E)$, 
we use the fact that $F_+$ and $G_+$ are positive definite. 
By using the Schur complement, we have 
\begin{align*}
&\begin{pmatrix}
\gamma F_+ -\displaystyle\frac{1}{\gamma}H_{1+} & -J_+^T\\
-J_+& \gamma G_+ -\displaystyle\frac{1}{\gamma}H_{2+} 
\end{pmatrix}\in\mathbb{S}^{2n}_+ \\
&\iff 
\begin{pmatrix}
\gamma I_n - \displaystyle\frac{1}{\gamma}F_{+}^{-1/2} H_{1+}F_{+}^{-1/2}& *\\ 
-G_+^{-1/2}J_+F_{+}^{-1/2} & \gamma I_n - \displaystyle\frac{1}{\gamma}G_{+}^{-1/2} H_{2+}G_{+}^{-1/2}
\end{pmatrix}\in\mathbb{S}^{2n}_+, \\
&\iff \begin{pmatrix}
\gamma I_n & *& * & * \\ 
-G_+^{-1/2}J_+F_+^{-1/2} & \gamma I_n & *&*\\
-H_{1+}^{1/2}F_+^{-1/2} & O & \gamma I_n &*\\
O & -H^{1/2}_{2+}G_+^{-1/2} & O & \gamma I_n
\end{pmatrix}\in\mathbb{S}^{4n}_+ \iff \gamma \ge \hat{\gamma} = \lambda_{\max}(E). 
\end{align*}
We note that the maximum eigenvalue of $E$ is nonnegative since $E$ is indefinite. 
%
%
Hence the maximum eigenvalue of $E$ is nonnegative, 
and thus we conclude that the optimal value $\gamma^*$ of
\eqref{LMI2} is equal to the maximum eigenvalue of $E$.  
In addition, since \eqref{LMI5} is equivalent to \eqref{LMI2}, 
an optimal solution of \eqref{LMI2} is obtained from \eqref{Xhat} and \eqref{Yhat}.  
Therefore we obtain the result.
\end{proof}

Before closing this section, 
we provide an explicit way to compute the matrix $E$ in Theorem \ref{thm:case1}. 
First, we compute all the invariant zeros 
$\lambda$ and $\omega$ of $G_{zu}$ and $G_{yw}$ and
their null vectors $\left(
\begin{smallmatrix}
S\\
f^T
\end{smallmatrix}
\right)$ and $\left(
\begin{smallmatrix}
T\\
g^T
\end{smallmatrix}
\right)$ in \eqref{zeroGzu} and
\eqref{tgdef}, respectively. 
Collecting them, we define 
$\Lambda$, $\Omega$, $S$, $f$, $T$ and $g$ as in 
\eqref{zeroGzu} and \eqref{tgdef}. 
We remark that we have $\Lambda_+=\Lambda$, $\Omega_+=\Omega$, $S_+=S$,
$f_+=f$, $T_+=T$ and $g_+=g$ under this assumption. 
Second, we compute $J_+$, $h_{1+}$ and $h_{2+}$ by
\[
J_+ = T^T_+S_+,\quad h_{1+} = S_+^Tb_1+d_{11}f_+,\quad h_{2+} = T_+^Tc_1 + d_{11} g_+. 
\]
Next, we solve the following Lyapunov equation 
to determine the the symmetric matrix $F_+$: 
\[
(-\Lambda_+^T)F_+ +F_+(-\Lambda_+) = -f_+f_+^T. 
\]
As $(\Lambda_+, f_+)$ is controllable, 
the solution of the above Lyapunov equation is positive definite.  
Similarly, we solve the following Lyapunov equations 
to determine the symmetric matrices $G_+$, $H_{1+}$ and $H_{2+}$,
respectively:
\begin{align*}
(-\Omega_+^T)G_+ +G_+(-\Omega_+) &= -g_+g_+^T, \\ 
(-\Lambda_+^T)H_{1+} +H_{1+}(-\Lambda_+) &= -h_{1+}h_{1+}^T, \\
(-\Omega_+^T)H_{2+} +H_{2+}(-\Omega_+) &= -h_{2+}h_{2+}^T.  
\end{align*}
Finally, we compute $F_+^{-1/2}$, $G_+^{-1/2}$, $H_{1+}^{1/2}$ and
$H_{2+}^{1/2}$ by eigenvalue decomposition. 
Then we can obtain the matrix $E$ in \eqref{gammacase1}.

\section{Analysis of Case 2}\label{sec:case2}
When dealing with Case 2 stated at the final part of Section \ref{sec:Hinf}, 
we cannot obtain Theorem \ref{thm:case1} by a similar discussion to 
Section \ref{sec:case1}. 
The difficulty lies in the fact that we cannot represent the solutions of the Lyapunov equations with respect to $\Lambda$ and $\Omega$ since they contain negative eigenvalues in Case 2. 
However, we can overcome this difficulty by investigating the structure of feasible solutions of the dual of \eqref{LMI2}. 

The following lemma provides the mathematical formulation of the dual of \eqref{LMI2}. We give a proof in \ref{subapp:lemma6}. 
\begin{lemma}\label{lem:dual}
The dual of \eqref{LMI2} can be formulated as follows:
\begin{align}\label{dual2}
&\left\{
\begin{array}{cl}
\displaystyle\sup_{W_{ij}, Z_{ij}, V_{ij}} 
& 2(h_1^T\bullet Z_{21} + h_2^T\bullet V_{21} + J\bullet W_{21})\\
\mbox{subject to} & f^TZ_{11}f + Z_{22} + g^TV_{11}g + V_{22} = 1, \\
& W_{11} = \He(\Lambda Z_{11}), W_{22} = \He(\Omega V_{11}), \\
& \begin{pmatrix}
Z_{11} & Z_{21}^T\\
Z_{21} & Z_{22}
\end{pmatrix}\in\mathbb{S}^{n+1}_+, \begin{pmatrix}
V_{11} & V_{21}^T\\
V_{21} & V_{22}
\end{pmatrix}\in\mathbb{S}^{n+1}_+, \begin{pmatrix}
W_{11} & W_{21}^T\\
W_{21} & W_{22}
\end{pmatrix}\in\mathbb{S}^{2n}_+. 
\end{array}
\right.
\end{align}
Moreover, the duality gap between \eqref{LMI2} and \eqref{dual2} is zero, and \eqref{dual2} has an optimal solution. 
\end{lemma}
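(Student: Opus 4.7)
The plan is to derive \eqref{dual2} by Lagrangian duality and then appeal to the standard semidefinite strong duality theorem. First I introduce symmetric positive semidefinite multipliers $Z=\begin{pmatrix}Z_{11}&Z_{21}^T\\ Z_{21}&Z_{22}\end{pmatrix}\in\mathbb{S}^{n+1}_+$, $V=\begin{pmatrix}V_{11}&V_{21}^T\\ V_{21}&V_{22}\end{pmatrix}\in\mathbb{S}^{n+1}_+$, and $W=\begin{pmatrix}W_{11}&W_{21}^T\\ W_{21}&W_{22}\end{pmatrix}\in\mathbb{S}^{2n}_+$ for the three semidefinite constraints of \eqref{LMI2}, and form the Lagrangian
\[
\mathcal{L}(\gamma,\hat X,\hat Y;Z,V,W)=\gamma+Z\bullet M_1+V\bullet M_2-W\bullet M_3,
\]
where $M_1,M_2$ denote the matrices sitting inside the two ``$-(\cdot)\in\mathbb{S}^{n+1}_+$'' constraints of \eqref{LMI2} and $M_3$ is the $2n\times 2n$ coupling matrix.

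Next I collect the terms of $\mathcal{L}$ by primal variable. Using the symmetry identity $Z_{11}\bullet\He(\Lambda^T\hat X)=\hat X\bullet\He(\Lambda Z_{11})$, valid because $Z_{11}$ and $\hat X$ are symmetric, together with its analogue for $V_{11}$ and $\hat Y$, the coefficient of $\hat X$ becomes $\He(\Lambda Z_{11})-W_{11}$, the coefficient of $\hat Y$ becomes $\He(\Omega V_{11})-W_{22}$, and the coefficient of $\gamma$ becomes $1-f^TZ_{11}f-Z_{22}-g^TV_{11}g-V_{22}$. Because $\gamma,\hat X,\hat Y$ are unconstrained, $\inf\mathcal{L}=-\infty$ unless each of these coefficients vanishes; setting them to zero reproduces precisely the three equality constraints of \eqref{dual2}. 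A short off-diagonal block-trace computation, in particular $\trace(W_{21}^TJ)+\trace(W_{21}J^T)=2\,J\bullet W_{21}$, shows that what remains equals $2(h_1^T\bullet Z_{21}+h_2^T\bullet V_{21}+J\bullet W_{21})$, which is exactly the dual objective in \eqref{dual2}.

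For zero duality gap and dual attainment I invoke Theorem \ref{thm:duality}: under Assumption \ref{A} the LMI \eqref{LMI} is strictly feasible. Since $d_{12}\ne 0$ and $d_{21}\ne 0$ force $S$ and $T$ to be non-singular by Lemma \ref{lem:gplant}, the congruence transformations together with the substitutions $\hat X=S^TXS$ and $\hat Y=T^TYT$ that lead from \eqref{LMI} to \eqref{LMI2} preserve strict positive definiteness, so \eqref{LMI2} is itself strictly feasible. The Slater-type strong duality theorem for semidefinite programs (Theorem \ref{sdt}) then yields zero duality gap and an optimal solution of \eqref{dual2}. The only real bookkeeping hazard is tracking signs and the factors of $2$ in the off-diagonal block traces so that the dual coincides exactly with \eqref{dual2}; once that is pinned down the argument is mechanical.
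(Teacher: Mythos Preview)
Your proposal is correct and follows essentially the same approach as the paper: form the Lagrangian with PSD multipliers $Z,V,W$, rearrange using $Z_{11}\bullet\He(\Lambda^T\hat X)=\hat X\bullet\He(\Lambda Z_{11})$ to read off the equality constraints and the objective $2(h_1^T\bullet Z_{21}+h_2^T\bullet V_{21}+J\bullet W_{21})$, and then invoke Theorem~\ref{thm:duality} together with Theorem~\ref{sdt} for strong duality and dual attainment. Your added remark that the congruences $\hat X=S^TXS$, $\hat Y=T^TYT$ preserve strict feasibility when $S,T$ are non-singular is a small clarification the paper leaves implicit, but otherwise the arguments coincide.
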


The following lemma provides the structure of solutions of \eqref{dual2}. For this, we partition $Z_{11}$, $V_{11}$ and $W$ of a feasible solution $(Z, V, W)$ of \eqref{dual2} as follows:
\begin{align*}
Z_{11} &= \bordermatrix{
&(n-k_1) & k_1\cr
(n-k_1) & Z_{11}^1 & (Z_{11}^2)^T\cr
k_1 &Z_{11}^2&Z_{11}^3\cr
}, V_{11} = \bordermatrix{
&(n-k_2) & k_2\cr
(n-k_2) & V_{11}^1 & (V_{11}^2)^T\cr
k_2 &V_{11}^2&V_{11}^3\cr
}, \\
W_{11} &= \bordermatrix{
&(n-k_1) & k_1\cr
(n-k_1) & W_{11}^1 & (W_{11}^2)^T\cr
k_1 &W_{11}^2&W_{11}^3\cr
} \mbox{ and }  
W_{22} = \bordermatrix{
&(n-k_2) & k_2\cr
(n-k_2) & W_{22}^1 & (W_{22}^2)^T\cr
k_2 &W_{22}^2&W_{22}^3\cr
}. 
\end{align*}
\begin{lemma}\label{lem:dstructure}
Any feasible solution $(Z, V, W)$ of \eqref{dual2} has the form of  
\begin{align*}
W_{11} &= \begin{pmatrix}
O_{(n-k_1)\times (n-k_1)} & O_{k_1\times (n-k_1)}\\
O_{(n-k_1)\times k_1} & W_{11}^3
\end{pmatrix}, Z_{11} = \begin{pmatrix}
O_{(n-k_1)\times (n-k_1)} & O_{k_1\times (n-k_1)}\\
O_{(n-k_1)\times k_1} & Z_{11}^3
\end{pmatrix}, \\
W_{22} &= \begin{pmatrix}
O_{(n-k_2)\times (n-k_2)} & O_{k_2\times (n-k_2)}\\
O_{(n-k_2)\times k_2} & W_{22}^3
\end{pmatrix}, V_{11} = \begin{pmatrix}
O_{(n-k_2)\times (n-k_2)} & O_{k_1\times (n-k_2)}\\
O_{(n-k_2)\times k_2} & V_{11}^3
\end{pmatrix}, \\
Z_{21} &= \begin{pmatrix}
O_{1\times (n-k_1)} & Z_{21}^2
\end{pmatrix}, W_{11}^3 = \He(\Lambda_+ Z_{11}^3), \\
V_{21} &= \begin{pmatrix}
O_{1\times (n-k_2)} & V_{21}^2
\end{pmatrix} \mbox { and } W_{22}^3 = \He(\Omega_+ V_{11}^3). 
\end{align*}
Furthermore, it follows from the structure of 
$W_{11}$ and $W_{22}$ that we have  
\[
W_{21} = \begin{pmatrix}
O_{(n-k_2)\times (n-k_1)} & O_{(n-k_2)\times k_1}\\
O_{k_2\times (n-k_1)} & W_{21}^3
\end{pmatrix}. 
\]
\end{lemma}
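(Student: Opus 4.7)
\textbf{Proof proposal for Lemma \ref{lem:dstructure}.} The plan is to extract the desired zero pattern one block at a time by alternately using (a) the Lyapunov-type equality constraints $W_{11}=\He(\Lambda Z_{11})$ and $W_{22}=\He(\Omega V_{11})$, and (b) the elementary fact that a positive semidefinite matrix with a vanishing diagonal sub-block has zero off-diagonal blocks in the rows and columns of that sub-block. Since we assume all invariant zeros are real and distinct, $\Lambda$ and $\Omega$ are diagonal, so after the partition in \eqref{Partition} we may treat them as block-diagonal, $\Lambda=\diag(\Lambda_-,\Lambda_+)$ and $\Omega=\diag(\Omega_-,\Omega_+)$, with $\Lambda_-,\Omega_-$ Hurwitz (Case 2 hypothesis).

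First, substituting the partitions of $Z_{11}$ and $\Lambda$ into $W_{11}=\He(\Lambda Z_{11})$ yields, in particular, the $(1,1)$-subblock equation
\[
W_{11}^1=\Lambda_- Z_{11}^1+Z_{11}^1\Lambda_-^T.
\]
Since $W\in\mathbb{S}^{2n}_+$ and $Z\in\mathbb{S}^{n+1}_+$, their principal subblocks $W_{11}^1$ and $Z_{11}^1$ are positive semidefinite. Because $\Lambda_-$ is Hurwitz, the Lyapunov operator $P\mapsto\Lambda_-P+P\Lambda_-^T$ is invertible, and the unique solution to the above equation is
\[
Z_{11}^1=-\int_0^\infty e^{\Lambda_- t}W_{11}^1 e^{\Lambda_-^T t}\,dt\preceq O,
\]
so combined with $Z_{11}^1\succeq O$ we obtain $Z_{11}^1=O$, and substituting back gives $W_{11}^1=O$. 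The elementary PSD fact then forces the off-diagonal block $Z_{11}^2=O$, and the $(2,1)$-subblock equation $W_{11}^2=\Lambda_+Z_{11}^2+Z_{11}^2\Lambda_-^T$ gives $W_{11}^2=O$. This is exactly the claimed structure of $W_{11}$ and $Z_{11}$, and the equation for $W_{11}^3$ is simply the $(2,2)$-subblock of the original Lyapunov equality.

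Second, the identical argument applied to $W_{22}=\He(\Omega V_{11})$ with $\Omega_-$ Hurwitz yields the analogous zero pattern for $V_{11}$ and $W_{22}$, together with $W_{22}^3=\He(\Omega_+ V_{11}^3)$. For $Z_{21}$, the positive semidefiniteness of $\left(\begin{smallmatrix}Z_{11}&Z_{21}^T\\Z_{21}&Z_{22}\end{smallmatrix}\right)$ together with the vanishing of the top-left $(n-k_1)\times(n-k_1)$ block of $Z_{11}$ implies that the corresponding columns of $Z_{21}$ are also zero; writing $Z_{21}=(Z_{21}^1\ Z_{21}^2)$ this means $Z_{21}^1=O_{1\times(n-k_1)}$. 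The same argument gives $V_{21}^1=O_{1\times(n-k_2)}$. Finally, the zero pattern of $W_{21}$ follows by applying the PSD principal-block property once more to $\left(\begin{smallmatrix}W_{11}&W_{21}^T\\W_{21}&W_{22}\end{smallmatrix}\right)$: since both $W_{11}$ and $W_{22}$ have their stable $(1,1)$-blocks equal to zero, the rows of $W_{21}$ indexed by the stable part of $\Omega$ and the columns indexed by the stable part of $\Lambda$ must vanish, which is the displayed form of $W_{21}$.

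The main obstacle is the Lyapunov inversion step: it relies critically on $\Lambda_-$ (and $\Omega_-$) being strictly Hurwitz, with no eigenvalues on the imaginary axis, which is precisely the Case 2 hypothesis that excludes imaginary-axis invariant zeros. The "distinct real invariant zeros" simplification is what lets us take $\Lambda$ truly block-diagonal so that the subblock decomposition of $\He(\Lambda Z_{11})$ cleanly isolates $\He(\Lambda_- Z_{11}^1)$ with no cross terms; in the general non-distinct case, one would need to handle the off-diagonal $*$ block in \eqref{Partition} through a preliminary similarity transformation.
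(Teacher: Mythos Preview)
Your proof is correct and follows essentially the same approach as the paper: both arguments isolate the $(1,1)$-subblock equation $W_{11}^1=\He(\Lambda_- Z_{11}^1)$, invoke the Lyapunov integral representation with $\Lambda_-$ Hurwitz to force $Z_{11}^1=O$ (hence $W_{11}^1=O$), and then propagate the zero pattern to the off-diagonal blocks via positive semidefiniteness. You spell out the $Z_{21}$, $V_{21}$, and $W_{21}$ steps in slightly more detail than the paper (which proves only the $Z_{11},W_{11}$ structure explicitly and leaves the rest implicit), but the method is identical.
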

\begin{proof}
We prove only the structure of $Z_{11}$ and $W_{11}$. 
We focus on $W_{11}=\He(\Lambda Z_{11})$, $W_{11}\in\mathbb{S}^n_{+}$
and $Z_{11}\in\mathbb{S}^n_+$. Then $W_{11}=\He(\Lambda Z_{11})$ is
equivalently written as
\begin{align*}
\begin{pmatrix}
W_{11}^1 & (W_{11}^2)^T\\
W_{11}^2&W_{11}^3
\end{pmatrix} &= \He\left(
\begin{pmatrix}
\Lambda_- & \\
 & \Lambda_+
\end{pmatrix}
\begin{pmatrix}
Z_{11}^1 & *\\
Z_{11}^2 & Z_{11}^3
\end{pmatrix}\right)\\
&= \begin{pmatrix}
\He(\Lambda_-Z_{11}^1) & *\\
\Lambda_+Z_{11}^2 + Z_{11}^2 \Lambda_-^T&\He(\Lambda_+Z_{11}^3)
\end{pmatrix}. 
\end{align*}
We remark that $\Lambda_-$ is Hurwitz stable 
because all the eigenvalues of $\Lambda_-$ are negative. 
Since $W_{11}^1 = \He(\Lambda_-Z_{11}^1)$ 
can be seen as the Lyapunov equation, 
and since $W_{11}^1\in\mathbb{S}^{n-k_1}_+$ and $\Lambda_-$ is Hurwitz stable, 
we have 
\[
Z_{11}^1 = -\int_0^{\infty} \exp(\Lambda_-t)W_{11}^1\exp(\Lambda_-^Tt)\, dt. 
\]
It follows from the positive semidefiniteness of 
$Z_{11}^1$ and $W_{11}^1$ that $Z_{11}^1$ must be 
the 
zero matrix. 
Substituting this into the Lyapunov equation, 
we obtain $W_{11}^1 = O$. 
Consequently, $Z_{11}^2$ and $W_{11}^2$ are also the $k_1\times (n-k_1)$ 
zero matrix because $Z_{11}\in\mathbb{S}^n_+$ and 
$W_{11}^1\in\mathbb{S}^n_+$, respectively.
\end{proof}

Substituting the structure of dual solutions $(Z, V, W)$ 
to the first equality constraint 
in \eqref{dual2}, we obtain
\begin{align*}
&f^TZ_{11}f + Z_{22} + g^TV_{11}g + V_{22}\\
=& \begin{pmatrix}
f_-\\
f_+
\end{pmatrix}^T\begin{pmatrix}
O_{(n-k_1)\times (n-k_1)} & O_{(n-k_1)\times k_1}\\
O_{k_1\times(n-k_1)} & Z_{11}^3
\end{pmatrix}\begin{pmatrix}
f_-\\
f_+
\end{pmatrix} + Z_{22}\\
&\quad + \begin{pmatrix}
g_-\\
g_+
\end{pmatrix}^T\begin{pmatrix}
O_{(n-k_2)\times (n-k_2)} & O_{(n-k_2)\times k_2}\\
O_{k_2\times (n-k_2)} & V_{11}^3
\end{pmatrix}\begin{pmatrix}
g_-\\
g_+
\end{pmatrix} + V_{22}\\
=& f^T_+ Z_{11}^3 f_+ + Z_{22} + g_+^TV_{11}^3 g_+ + V_{22} = 1. 
\end{align*}
Moreover, we have 
\begin{align*}
J\bullet W_{21} &= \mbox{Trace}(T^TSW_{21}^T) =\mbox{Trace}\left(
T^TS
\begin{pmatrix}
O_{(n-k_1)\times (n-k_2)} & O_{(n-k_1)\times k_2}\\
O_{k_1\times (n-k_2)} & (W_{21}^3)^T
\end{pmatrix}\right)\\
&= \mbox{Trace}(T_+S^T_+(W_{21}^3)^T)) = J_+\bullet W_{21}^3, \\
h_1^T\bullet Z_{21}&= \begin{pmatrix}
h_{1-}^T & h_{1+}^T
\end{pmatrix}\bullet \begin{pmatrix}
O_{1\times (n-k_1)} & Z_{21}^2
\end{pmatrix} = h_{1+}^T\bullet Z_{21}^2, \mbox{ and }\\
h_2^T\bullet V_{21}&= \begin{pmatrix}
h_{2-}^T & h_{2+}^T
\end{pmatrix}\bullet \begin{pmatrix}
O_{1\times (n-k_2)} & V_{21}^2
\end{pmatrix} = h_{2+}^T\bullet V_{21}^2. 
\end{align*}
Therefore \eqref{dual2} is equivalent to the following optimization problem: 
\begin{align}\label{dual3}
&\left\{
\begin{array}{cl}
\displaystyle\sup_{\hat{W}_{ij}, \hat{Z}_{ij}, \hat{V}_{ij}} & 2(h_{1+}^T\bullet \hat{Z}_{21} + h_{2+}^T\bullet \hat{V}_{21} + J_+\bullet \hat{W}_{21})\\
\mbox{subject to} & f_+^T\hat{Z}_{11}f_+ + Z_{22} + g_+^T\hat{V}_{11}g_+ + V_{22} = 1, \\
& \hat{W}_{11} = \He(\Lambda_+ \hat{Z}_{11}), \hat{W}_{22} = \He(\Omega_+ \hat{V}_{11}), \begin{pmatrix}
\hat{W}_{11} & \hat{W}_{21}^T\\
\hat{W}_{21} & \hat{W}_{22}
\end{pmatrix}\in\mathbb{S}^{k_1+k_2}_+, \\
& \begin{pmatrix}
\hat{Z}_{11} & \hat{Z}_{21}^T\\
\hat{Z}_{21} & \hat{Z}_{22}
\end{pmatrix}\in\mathbb{S}^{k_1+1}_+, \begin{pmatrix}
\hat{V}_{11} & \hat{V}_{21}^T\\
\hat{V}_{21} & \hat{V}_{22}
\end{pmatrix}\in\mathbb{S}^{k_2+1}_+. 
\end{array}
\right.
\end{align}

\begin{remark}\label{facialred}
We have successfully reduced the dual \eqref{dual2} when at least either $G_{zw}$ or $G_{yw}$ has stable invariant zeros. This reduction corresponds to {\itshape facial reduction} in the literature of the optimization theory, which was proposed by Borwein and Wolkowicz in \cite{Borwein81} for general convex cone programming problems. Thereafter, \cite{Ramana97} and \cite{Ramana97SIOPT} proposed facial reduction for SDP problems. 

Facial reduction for SDP problems is a finitely iterative algorithm. It works for non-strictly feasible SDP problems like dual \eqref{dual2}, and generates a strictly feasible SDP problem whose optimal value is equal to the original. 

The number of minimal iterations of facial reduction is an important concept in convex analysis and is called {\itshape the degree of singularity of the SDP problem}. The degree is used for the error bound analysis of SDP feasibility problems in \cite{Sturm00b} and perturbation analysis of SDP in \cite{Cheung14}. In the proof of Lemma \ref{lem:dstructure}, we can see that the facial reduction spends only one iteration in Case \ref{Case2}. It is proved in \cite{Waki18} that the same fact holds for $H_\infty$ output feedback control problem for MIMO dynamical system. 
\end{remark}
The next lemma provides the dual of \eqref{dual3}. 
\begin{lemma}\label{lem:dualdual2}
The dual of \eqref{dual3} can be reformulated as follows: 
\begin{align}\label{LMI4case2}
&\left\{
\begin{array}{cl}
\displaystyle\inf_{\gamma, \hat{X}, \hat{Y}} & \gamma\\
\mbox{subject to} & - \begin{pmatrix}
\He(\Lambda_+^T \hat{X}) -\gamma f_+f_+^T & h_{1+}\\
h_{1+}^T & -\gamma
\end{pmatrix} \in\mathbb{S}_+^{k_1+1}, \\
& - \begin{pmatrix}
\He(\Omega_+^T \hat{Y}) -\gamma g_+g_+^T & h_{2+}\\
h_{2+}^T & -\gamma
\end{pmatrix} \in\mathbb{S}_+^{k_2+1}, \begin{pmatrix}
\hat{X} & -J_+^T\\
-J_+& \hat{Y}
\end{pmatrix}\in\mathbb{S}^{k_1+k_2}_+, 
\end{array}
\right.
\end{align}
Moreover the duality gap between \eqref{LMI4case2} and 
\eqref{dual3} is zero, 
and both \eqref{LMI4case2} and \eqref{dual3} have optimal solutions.  
\end{lemma}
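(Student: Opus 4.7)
The plan is to derive \eqref{LMI4case2} as the standard Lagrangian dual of \eqref{dual3} and then to upgrade weak duality to strong duality with attainment on both sides by producing a Slater point for \eqref{dual3}.

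For the dualization, I would attach a scalar multiplier $\gamma\in\mathbb{R}$ to the normalizing equality $f_+^T\hat Z_{11}f_+ + \hat Z_{22} + g_+^T\hat V_{11}g_+ + \hat V_{22}=1$, and symmetric multipliers $\hat X\in\mathbb{S}^{k_1}$ and $\hat Y\in\mathbb{S}^{k_2}$ to the two matrix equalities $\hat W_{11}=\He(\Lambda_+\hat Z_{11})$ and $\hat W_{22}=\He(\Omega_+\hat V_{11})$. Using the cyclic-trace identity
\[
\hat X\bullet\He(\Lambda_+\hat Z_{11})\;=\;\He(\Lambda_+^T\hat X)\bullet\hat Z_{11},
\]
which follows in one line from symmetry of $\hat X$ and $\hat Z_{11}$, and its analogue for $\hat Y$, the coefficient of $\hat Z$ in the Lagrangian becomes $\begin{pmatrix}\He(\Lambda_+^T\hat X)-\gamma f_+f_+^T & h_{1+}\\ h_{1+}^T & -\gamma\end{pmatrix}$, the coefficient of $\hat V$ takes the analogous form with $\Omega_+,g_+,h_{2+}$, and the coefficient of $\hat W$ reduces to $\begin{pmatrix}-\hat X & J_+^T\\ J_+ & -\hat Y\end{pmatrix}$. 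For the supremum over $\hat Z,\hat V,\hat W\succeq 0$ to be finite each of these matrices must be negative semidefinite; the three resulting inequalities are exactly those of \eqref{LMI4case2}, and the finite supremum value equals the constant term $\gamma$, giving the objective $\inf\gamma$.

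For strong duality and attainment, I would exhibit a Slater point of \eqref{dual3}. Because the paper assumes all invariant zeros are real and distinct, $\Lambda_+$ and $\Omega_+$ are diagonal with strictly positive entries, so $\He(\Lambda_+)\succ0$ and $\He(\Omega_+)\succ0$. Choosing $\hat Z_{11}=\epsilon I_{k_1}$, $\hat V_{11}=\epsilon I_{k_2}$ for small $\epsilon>0$, setting the off-diagonal blocks $\hat Z_{21},\hat V_{21},\hat W_{21}$ to zero, and the scalar blocks $\hat Z_{22},\hat V_{22}$ to positive values satisfying the normalization, yields $\hat W_{11}=2\epsilon\Lambda_+\succ0$, $\hat W_{22}=2\epsilon\Omega_+\succ0$, and block-diagonal $\hat Z,\hat V,\hat W$ that are all strictly positive definite. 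Hence \eqref{dual3} is strictly feasible, so Theorem \ref{sdt} in \ref{app:sdp} delivers zero duality gap between \eqref{LMI4case2} and \eqref{dual3} together with attainment of the infimum in \eqref{LMI4case2}. Attainment of the supremum in \eqref{dual3} then follows from Lemmas \ref{lem:dual} and \ref{lem:dstructure}: the optimal solution of \eqref{dual2} guaranteed by Lemma \ref{lem:dual} has the zero-block structure described in Lemma \ref{lem:dstructure}, and dropping the zero blocks produces a feasible solution of \eqref{dual3} with the same objective value, which is therefore optimal.

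The only delicate point is the book-keeping inside the Lagrangian, so that the three coefficient matrices come out with exactly the signs and transposes displayed in \eqref{LMI4case2}; the rest is mechanical. I do not anticipate any substantive obstacle, since Lemma \ref{lem:dstructure} already did the hard work: the facially-reduced formulation \eqref{dual3} satisfies the Slater-type constraint qualification that the unreduced \eqref{dual2} failed to satisfy, which is exactly what drives the attainment statement for \eqref{LMI4case2}.
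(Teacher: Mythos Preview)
Your argument is correct, and the dualization is exactly what the paper does (it just points back to the computation in Lemma~\ref{lem:dual}). Where you differ is in the strong-duality half. The paper establishes strict feasibility on the \emph{primal} side \eqref{LMI4case2}: in \ref{subapp:proof3} it writes out the Gordan-type alternative \ref{P2} of Theorem~\ref{thm:fr} and kills it with the controllability lemma (Lemma~\ref{lem:cntobs}) for $(\Lambda_+,f_+)$ and $(\Omega_+,g_+)$; this yields zero gap and attainment in \eqref{dual3}, and attainment in \eqref{LMI4case2} is then obtained ``as in Lemma~\ref{lem:exist}'' by showing \eqref{dual3} strictly feasible. You instead go straight for a Slater point of \eqref{dual3}, exploiting that $\Lambda_+,\Omega_+$ are diagonal with positive entries under the distinct-real-zero assumption, and you recover attainment in \eqref{dual3} from the facial-reduction chain (Lemmas~\ref{lem:dual} and~\ref{lem:dstructure}) rather than from strict feasibility of \eqref{LMI4case2}.

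Both routes work. Yours is shorter and avoids Lemma~\ref{lem:cntobs} entirely, at the cost of leaning on the diagonality of $\Lambda_+,\Omega_+$; if you ever drop the distinctness assumption you would want to replace $\hat Z_{11}=\epsilon I$ by the Lyapunov solution of $\He(\Lambda_+\hat Z_{11})=I$, which is positive definite since $-\Lambda_+$ is Hurwitz. One small bookkeeping point: when you invoke Theorem~\ref{sdt} from strict feasibility of \eqref{dual3}, you also need \eqref{dual3} bounded above. You have this---the facial-reduction chain you cite for attainment already gives $\sup\eqref{dual3}=\sup\eqref{dual2}=\gamma^*<\infty$---but it should be stated before the appeal to Theorem~\ref{sdt}, not after.
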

\begin{proof}
We can prove by a similar manner in Lemma \ref{lem:dual} 
the fact that the dual of \eqref{LMI4case2} is \eqref{dual3}. 
In fact, it is well-known that the dual of the dual problem is exactly the primal problem. The proof on the zero duality gap 
between \eqref{LMI4case2} and \eqref{dual3} is provided in \ref{subapp:proof3}. 
In addition, we can prove the existence of 
optimal solutions of \eqref{LMI4case2} and \eqref{dual3} 
by similar manners to the proofs in \ref{subapp:proof2} and \ref{subapp:proof3}.  
\end{proof}

Since all eigenvalues of both $\Lambda_+$ and $\Omega_+$ are positive, 
we obtain the same result as Theorem \ref{thm:case1} 
by applying the discussion in Section \ref{sec:case1}. 
We summarize the result in this subsection as follows: 
\begin{theorem}\label{thm:case2}
Let us consider Case \ref{Case2} stated at the final part of Section \ref{sec:Hinf}. 
Then the optimal value $\gamma^*$ of LMI problem \eqref{LMI2} is equal 
to the maximum eigenvalue of the symmetric matrix $E$ defined by  
\begin{align}\label{gammacase2}
E&:= \begin{pmatrix}
O& F_+^{-1/2}J_+^TG_+^{1/2} & F_+^{-1/2}H^{1/2}_{1+} & O \\
G_+^{-1/2}J_+F_+^{-1/2} & O& O &G_+^{-1/2}H^{1/2}_{2+}\\
H^{1/2}_{1+}F^{-1/2}_+ & O & O &O \\
O & H^{1/2}_{2+}G_+^{-1/2} & O & O
\end{pmatrix}. 
\end{align}
Here $F_+, H_{1+}\in\mathbb{S}^{k_1}_+$ and $G_+, H_{2+}\in\mathbb{S}^{k_2}$ are defined by
\begin{align*}
&\left\{
\begin{array}{ccl}
F_+ &=& \displaystyle\int_{0}^{\infty}\exp(-\Lambda_+^T t)
f_+f_+^T\exp(-\Lambda_+ t)\, dt,\\ 
G_+ &=& \displaystyle\int_{0}^{\infty}\exp(-\Omega_+^T t)
g_+g_+^T\exp(-\Omega_+ t)\, dt, \\
H_{1+} &=& \displaystyle\int_{0}^{\infty}\exp(-\Lambda_+^T t)
h_{1+}h_{1+}^T\exp(-\Lambda_+ t)\, dt,\\ 
H_{2+} &=& \displaystyle\int_{0}^{\infty}\exp(-\Omega^T_+ t)
h_{2+}h_{2+}^T\exp(-\Omega_+ t)\, dt. 
\end{array}
\right.
\end{align*}
In particular, $F_+$ and $G_+$ are positive definite because of Assumption \ref{A1}. 
\end{theorem}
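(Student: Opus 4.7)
The plan is to reduce Case \ref{Case2} to Case \ref{Case1} by combining Lemmas \ref{lem:dstructure} and \ref{lem:dualdual2} and then replaying the Schur-complement/Lyapunov-equation argument of Section \ref{sec:case1}. The key observation is that the reduced LMI problem \eqref{LMI4case2} has exactly the same structural form as \eqref{LMI2} in Case \ref{Case1}, only now the parameters $(\Lambda_+, \Omega_+, f_+, g_+, h_{1+}, h_{2+}, J_+)$ live in the unstable-zero subspaces of dimensions $k_1$ and $k_2$ instead of the full $n$, and crucially $\Lambda_+$ and $\Omega_+$ contain only (strictly) positive eigenvalues under the Case \ref{Case2} hypothesis.

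First, I would invoke Lemma \ref{lem:dual} to write the dual \eqref{dual2} of \eqref{LMI2} and note that strong duality holds. Then Lemma \ref{lem:dstructure} (facial reduction: the Lyapunov block $W_{11}^1 = \He(\Lambda_- Z_{11}^1)$ combined with positive semidefiniteness of both $Z_{11}$ and $W_{11}$ forces the stable-zero subblocks to vanish) shows that \eqref{dual2} is equivalent to the lower-dimensional problem \eqref{dual3}. Lemma \ref{lem:dualdual2} then identifies the dual of \eqref{dual3} as the reduced LMI problem \eqref{LMI4case2} and asserts zero duality gap and attainment on both sides. Consequently the optimal value $\gamma^*$ of \eqref{LMI2} coincides with the optimal value of \eqref{LMI4case2}.

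Next, I would apply to \eqref{LMI4case2} the same sequence of manipulations as in Section \ref{sec:case1}: handle the trivial case $\gamma^* = 0$ by the analogue of Lemma \ref{lem:zerocase1}; for $\gamma > 0$, apply the Schur complement to the two diagonal blocks to obtain Lyapunov inequalities $\He((-\Lambda_+)^T \hat{X}) + \gamma f_+ f_+^T - h_{1+} h_{1+}^T / \gamma \succeq 0$ and the corresponding inequality in $\hat{Y}$; since $-\Lambda_+^T$ and $-\Omega_+^T$ are Hurwitz stable (because $\Lambda_+$, $\Omega_+$ have strictly positive spectrum), integrate to get the explicit representation \eqref{Xhat}--\eqref{Yhat} with $\Lambda, \Omega, f, g, h_1, h_2$ replaced by their $+$-variants; and observe, as in Section \ref{sec:case1}, that the extra positive-semidefinite correction terms $\tilde{X}, \tilde{Y}$ can be set to zero without loss, yielding the equivalent reduced problem analogous to \eqref{LMI5} with $F_+, G_+, H_{1+}, H_{2+}$ as defined in the statement. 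A final Schur-complement pass converts the $2k_1 \times 2k_2$ matrix inequality into the eigenvalue inequality $\gamma \ge \lambda_{\max}(E)$, using that $F_+, G_+ \in \mathbb{S}_{++}$ by controllability of $(\Lambda_+, f_+)$ and $(\Omega_+, g_+)$ (Lemma \ref{lem:cntobs} in \ref{subapp:proof3}).

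The main obstacle I anticipate is purely bookkeeping: verifying that the facial reduction of \eqref{dual2} to \eqref{dual3} passes cleanly through the dualization in Lemma \ref{lem:dualdual2} so that no spurious cross terms between $-$ and $+$ blocks survive, and in particular that $J \bullet W_{21}$, $h_1^T \bullet Z_{21}$, $h_2^T \bullet V_{21}$ collapse to $J_+ \bullet \hat{W}_{21}$, $h_{1+}^T \bullet \hat{Z}_{21}$, $h_{2+}^T \bullet \hat{V}_{21}$ respectively. Once this bookkeeping is done, the remainder is a verbatim repetition of the Schur-complement/Lyapunov argument of Theorem \ref{thm:case1}, and positive definiteness of $F_+, G_+$ (needed for the final Schur step) is ensured by controllability, exactly as in Case \ref{Case1}.
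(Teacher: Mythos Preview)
Your proposal is correct and follows essentially the same approach as the paper: pass to the dual \eqref{dual2} via Lemma \ref{lem:dual}, apply the facial reduction of Lemma \ref{lem:dstructure} to reach \eqref{dual3}, dualize back via Lemma \ref{lem:dualdual2} to obtain \eqref{LMI4case2}, and then replay the Case~\ref{Case1} Schur-complement/Lyapunov argument. The bookkeeping you flag (collapsing $J\bullet W_{21}$, $h_1^T\bullet Z_{21}$, $h_2^T\bullet V_{21}$ to their $+$-block counterparts) is carried out explicitly in the paper just before \eqref{dual3}, so there is no hidden difficulty there.
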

\begin{proof}
All the optimal values of \eqref{LMI2}, \eqref{dual2}, \eqref{dual3} 
and \eqref{LMI4case2} are equivalent. 
In fact, the equivalence between \eqref{LMI2} and  
\eqref{dual2} follows from Theorem \ref{thm:duality}. 
The optimal values of \eqref{dual2} is equal 
to the optimal value of \eqref{dual3} 
because we obtain \eqref{dual3} from \eqref{dual2} 
by investigating the structure of solutions of \eqref{dual2}. 
The equivalence between the optimal values of \eqref{dual3} and \eqref{LMI4case2} 
follows from Lemma \ref{lem:dualdual2}. 
Finally, we can prove that the optimal value of \eqref{LMI4case2} is given by $\lambda_{\max}(E)$. In fact, we can prove that if $\gamma^*=0$, then $h_{1+}=0$, $h_{2+}=0$ and $J_+=O$, and thus $E$ is the zero matrix. Clearly, $\gamma^*=\lambda_{\max}(E)$. Otherwise, since $\gamma^*>0$, we can prove $\gamma^*=\lambda_{\max}(E)$ in a similar manner to the proof of Theorem \ref{thm:case1}. 
\end{proof}

We remark that the size of the matrix $E$ in \eqref{gammacase1} is $4n$, 
while in \eqref{gammacase2} the size is $2(k_1+k_2)$.  
When both of $G_{zu}$ and $G_{yw}$ have no stable invariant zeros, 
then \eqref{gammacase2} is equal to \eqref{gammacase1}. 

We obtain the following corollary from Theorem \ref{thm:case2}. 
\begin{corollary}\label{coro:closedform}
If all the invariant zeros of $G_{zu}$ are stable, 
then the optimal value $\gamma^*$ of \eqref{LMI2} is equal to
\begin{align}\label{Gzustable}
\lambda_{\max}\begin{pmatrix}
 O & G_+^{-1/2}H_{2+}^{1/2}\\
H_{2+}^{1/2}G_+^{-1/2} & O
\end{pmatrix}. 
\end{align}
Similarly, if all the invariant zeros in $G_{yw}$ are stable, 
then the optimal value $\gamma^*$ of \eqref{LMI2} is equal to
\begin{align}
\lambda_{\max}\begin{pmatrix}
 O & F_+^{-1/2}H_{1+}^{1/2}\\
H_{1+}^{1/2}F_+^{-1/2} & O
\end{pmatrix}. 
\end{align}
Finally, if both of $G_{zu}$ and $G_{yw}$ have no unstable zeros, 
then $\gamma^*$ is equal to zero. 
\end{corollary}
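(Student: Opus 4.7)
The plan is to derive this as a direct specialization of Theorem \ref{thm:case2}, interpreting the phrase ``all invariant zeros of $G_{zu}$ are stable'' as the statement $k_1=0$ in the partition \eqref{Partition}. With $k_1=0$, the unstable block $\Lambda_+$ is absent, so $S_+$, $f_+$, $h_{1+}$, $F_+$, $H_{1+}$ and $J_+=T_+^TS_+$ all have zero columns. Substituting into the reduced LMI \eqref{LMI4case2}, the first Schur-complement constraint collapses to the scalar $\gamma\ge 0$, the coupling constraint $\left(\begin{smallmatrix}\hat X & -J_+^T\\ -J_+&\hat Y\end{smallmatrix}\right)\succeq 0$ reduces to $\hat Y\succeq 0$, and only the second constraint, involving $\Omega_+$, $g_+$ and $h_{2+}$, survives.

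Next I would apply to this single remaining constraint the same manipulations that were used in Section \ref{sec:case1}. Assuming first that $\gamma>0$, the Schur complement turns the constraint into $\He((-\Omega_+)^T\hat Y)+\gamma g_+g_+^T-\gamma^{-1}h_{2+}h_{2+}^T\succeq 0$, which is a Lyapunov inequality in $\hat Y$ with $-\Omega_+^T$ Hurwitz stable. Exactly as in the derivation of \eqref{LMI5}, this is feasible if and only if $\gamma G_+-\gamma^{-1}H_{2+}\succeq 0$. Conjugating by $G_+^{-1/2}$ gives $\gamma^2 I\succeq G_+^{-1/2}H_{2+}G_+^{-1/2}$, so the optimal $\gamma$ is the square root of $\lambda_{\max}(G_+^{-1/2}H_{2+}G_+^{-1/2})$. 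Recognizing this as $\sigma_{\max}(G_+^{-1/2}H_{2+}^{1/2})$, i.e.\ the maximum eigenvalue of the block anti-diagonal matrix $\left(\begin{smallmatrix}O & G_+^{-1/2}H_{2+}^{1/2}\\ H_{2+}^{1/2}G_+^{-1/2}&O\end{smallmatrix}\right)$, yields \eqref{Gzustable}.

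The symmetric case where every invariant zero of $G_{yw}$ is stable is handled by the identical argument with the roles of $(\Lambda_+,f_+,h_{1+},F_+,H_{1+})$ and $(\Omega_+,g_+,h_{2+},G_+,H_{2+})$ swapped (so $k_2=0$ rather than $k_1=0$). When both $k_1=k_2=0$ hold simultaneously, both Schur-complement constraints reduce to $\gamma\ge 0$ with the coupling constraint becoming vacuous, so $\gamma^*=0$; this can equivalently be read off from an analogue of Lemma \ref{lem:zerocase1} for the reduced problem.

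The only mildly delicate point, and the main obstacle I would expect, is a careful bookkeeping of the ``empty'' blocks in \eqref{gammacase2}: one has to verify that the first and third row and column blocks of $E$ literally have zero dimension when $k_1=0$, leaving behind the $2k_2\times 2k_2$ off-diagonal matrix in \eqref{Gzustable}. This is straightforward once the conventions for zero-dimensional matrices in the Schur complement and Lyapunov integrals \eqref{FGH} are fixed, but it is the step where an over-hasty substitution into \eqref{gammacase2} could produce spurious blocks.
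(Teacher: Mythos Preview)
Your proposal is correct and lands on exactly the same reduced problem as the paper, namely
\[
\inf\left\{\gamma:\ \gamma\ge 0,\ -\begin{pmatrix}\He(\Omega_+^T\hat Y)-\gamma g_+g_+^T & h_{2+}\\ h_{2+}^T & -\gamma\end{pmatrix}\succeq 0,\ \hat Y\succeq 0\right\},
\]
after which both you and the paper finish with the Schur complement and Lyapunov argument from Section~\ref{sec:case1}. The only difference is in how you reach this reduced LMI: you specialize the already-established \eqref{LMI4case2} (equivalently, Theorem~\ref{thm:case2}) to $k_1=0$ and handle the zero-dimensional blocks directly, whereas the paper re-derives it from scratch by going back to the dual \eqref{dual2}, invoking Lemma~\ref{lem:dstructure} with $\Lambda=\Lambda_-$, and then dualizing again. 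The paper even flags your route as the ``intuitive'' one before choosing the longer self-contained derivation. Your version is shorter and perfectly rigorous once the empty-block conventions are fixed (which you correctly identify as the only point requiring care); the paper's version has the minor advantage of avoiding any appeal to zero-dimensional matrices.
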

\begin{proof}
We prove \eqref{Gzustable} only.  
Intuitively, \eqref{Gzustable} directly follows from 
Theorem \ref{thm:case2} because $F_+$ and $H_{1+}$ both vanish in the
present case.  
The proof can be made more rigorous as follows.  
Since all the invariant zeros in $G_{zu}$ are stable, 
we have $\Lambda = \Lambda_-$, $S=S_-$ and $f=f_-$. 
It follows from Lemma \ref{lem:dstructure} that 
any feasible solution $(Z, V, W)$ of \eqref{dual2} has the form of 
\begin{align*}
Z &= \begin{pmatrix}
O_n & 0\\
0^T &Z_{22}
\end{pmatrix}, V = \begin{pmatrix}
O_{n-k_2} & O_{(n-k_2)\times k_2} & 0\\
O_{k_2\times (n-k_2)} & \hat{V}_{11}& \hat{V}_{21}^T\\
0^T &\hat{V}_{21}&V_{22}
\end{pmatrix} \mbox{ and } \\
 W &= \begin{pmatrix}
O_{n} & O_{n\times (n-k_2)} & O_{n\times k_2}\\
O_{(n-k_2)\times n} & O_{n-k_2} & O_{(n-k_2)\times k_2}\\
O_{k_2\times n} & O_{k_2\times (n-k_2)} & \hat{W}_{22}
\end{pmatrix}. 
\end{align*}
Substituting them into \eqref{dual2}, it can be reformulated as
\begin{align}\label{dual3_tmp}
&\left\{
\begin{array}{cl}
\displaystyle\sup_{Z_{22}, \hat{V}_{ij}, V_{22}, \hat{W}_{22}} & 2h_{2+}^T\bullet \hat{V}_{21}\\
\mbox{subject to}& Z_{22} + g_+^T\hat{V}_{11}g_+ + V_{22} = 1, \hat{W}_{22} = \He(\Omega_+ \hat{V}_{11}), \\
&\begin{pmatrix}
\hat{V}_{11} & \hat{V}_{21}^T\\
\hat{V}_{21} & \hat{V}_{22}
\end{pmatrix}\in\mathbb{S}^{k_2+1}_+, Z_{22}\ge0, \hat{W}_{22} \in\mathbb{S}^{k_2}_+. 
\end{array}
\right. 
\end{align}
By following a similar line to Lemma \ref{lem:dualdual2}, 
we obtain the following dual problem:
\begin{align}\label{LMI4case2_tmp}
&\displaystyle\inf
\left\{\gamma : \gamma \ge 0, - \begin{pmatrix}
\He(\Omega_+^T \hat{Y}) -\gamma g_+g_+^T & h_{2+}\\
h_{2+}^T & -\gamma
\end{pmatrix} \in\mathbb{S}_+^{k_2+1}, \hat{Y}\in\mathbb{S}^{k_2}_+
\right\}. 
\end{align}
We can prove the duality gap between \eqref{LMI4case2_tmp} and
 \eqref{dual3_tmp} is zero.  
For \eqref{LMI4case2_tmp}, it follows from a similar manner 
in the proof of Theorem \ref{thm:case2} that 
\[
\gamma^*=\hat{\gamma} = \lambda_{\max}\begin{pmatrix}
 O & G_+^{-1/2}H_{2+}^{1/2}\\
H_{2+}^{1/2}G_+^{-1/2} & O
\end{pmatrix}. 
\]
%
%
%
\end{proof}

\section{Analysis of Case 3}\label{sec:case3}

In this section, we deal with Case 3, stated in the final part of Section
\ref{sec:Hinf}. For simplicity, we assume the following.
\begin{itemize}
\item We allow both $G_{zu}$ and $G_{yw}$ to have complex invariant zeros on the imaginary axis. 
\item All the invariant zeros on the imaginary axis are distinct from each other.
\item Both $G_{zu}$ or $G_{yw}$ do not have $0$ as invariant zero. 
\end{itemize}
In particular, the first assumption corresponds to remove (b) of Assumption \ref{A}. Otherwise, we deal with only $0$ as invariant zeros on the imaginary axis. Thus this assumption makes the discussions in Case 3 more general. Other assumptions are imposed to improve the readability. However, we emphasize that the result in Theorem \ref{thm:case3} is still valid without assuming them. 

Under (a) of Assumption \ref{A} and these additional assumptions, we consider the case where $G_{zu}$ (resp. $G_{yw}$) has $2m_1$ (resp. $2m_2$) invariant zeros $\lambda_1, \ldots, \lambda_{m_1}$ (resp. $\omega_1, \ldots, \omega_{m_2}$) and their complex conjugates $\bar{\lambda}_1$, $\ldots$, $\bar{\lambda}_{m_1}$ (resp. $\bar{\omega}_1$, $\ldots$, $\bar{\omega}_{m_2}$) on the imaginary axis. Moreover, these invariant zeros are distinct from each other. 
The null vectors $(s_j^T, f_j)$ \ $(j=1, \ldots, m_1)$ and 
$(t_j^T, g_j)$ \ $(j=1, \ldots, m_2)$ associated with 
the invariant zeros 
$\lambda_j\ (j=1,\cdots,m_1)$ and 
$\omega_j\ (j=1,\cdots,m_2)$
can be written, respectively, by 
\begin{align*}
\begin{pmatrix}
s_j\\
f_j
\end{pmatrix} &= \begin{pmatrix}
s_j^r\\
f_j^r
\end{pmatrix} + \sqrt{-1}\begin{pmatrix}
s_j^i\\
f_j^i
\end{pmatrix} \ (j=1, \ldots, m_1), \\
\begin{pmatrix}
t_j\\
g_j
\end{pmatrix} &= \begin{pmatrix}
t_j^r\\
g_j^r
\end{pmatrix} + \sqrt{-1}\begin{pmatrix}
t_j^i\\
g_j^i
\end{pmatrix} \ (j=1, \ldots, m_2). 
\end{align*}
Here $s_j^r$, $s_j^i$, $t_j^r$, and $t_j^i$ are in $\mathbb{R}^n$ and
$f_j^r$, $f_j^i$, $g_j^r$, and $g_j^i$ are in $\mathbb{R}$. 
Note that $|f_j|^2 = (f_j^r)^2+(f_j^i)^2$ and $|g_j|^2= (g_j^r)^2+(g_j^i)^2$ are nonzero due to Assumption \ref{A1} and \ref{F2} of Lemma \ref{lem:zero}.  
Then we have 
\begin{align}
\label{sf0}
\begin{pmatrix}
(s_1^r)^T & f_1^r\\
(s_1^i)^T & f_1^i\\
\vdots & \vdots\\
(s_{m_1}^r)^T & f_{m_1}^r\\
(s_{m_1}^i)^T & f_{m_1}^i
\end{pmatrix}\begin{pmatrix}
A & b_2\\
c_1^T & d_{12}
\end{pmatrix} &= \begin{pmatrix}
F(\lambda_1) & & \\
& \ddots & \\
& & F(\lambda_{m_1})
\end{pmatrix}\begin{pmatrix}
(s_1^r)^T & 0\\
(s_1^i)^T & 0\\
\vdots & \vdots\\
(s_{m_1}^r)^T & 0\\
(s_{m_1}^i)^T & 0
\end{pmatrix}, \\
\label{tg0}
\begin{pmatrix}
(t_1^r)^T & g_1^r\\
(t_1^i)^T & g_1^i\\
\vdots & \vdots\\
(t_{m_2}^r)^T & g_{m_2}^r\\
(t_{m_2}^i)^T & g_{m_2}^i
\end{pmatrix}\begin{pmatrix}
A^T & c_2\\
b_1^T & d_{21}
\end{pmatrix} &= \begin{pmatrix}
F(\omega_1) & &\\
& \ddots &\\
& & F(\omega_{m_2})
\end{pmatrix}\begin{pmatrix}
(t_1^r)^T & 0\\
(t_1^i)^T & 0\\
\vdots & \vdots\\
(t_{m_2}^r)^T & 0\\
(t_{m_2}^i)^T & 0
\end{pmatrix}, 
\end{align}
where $F(\lambda)$ is defined by 
\[
F(\lambda) = \begin{pmatrix}
0 & \Im(\lambda)\\
-\Im(\lambda) & 0
\end{pmatrix}. 
\]
Here $\Im(\lambda)$ denotes the imaginary part of $\lambda\in\mathbb{C}$. 
Note that $F(\lambda)$ is non-singular when $\lambda \neq0$. For simplicity, we denote \eqref{sf0} and \eqref{tg0} by 
\begin{align}\label{zeroOnimg}
\begin{pmatrix}
S^T_0 & f_0
\end{pmatrix}\begin{pmatrix}
A & b_2\\
c_1^T & d_{12}
\end{pmatrix} = \Lambda_0^T
\begin{pmatrix}
S_0^T & 0
\end{pmatrix} \mbox{ and }
\begin{pmatrix}
T^T_0 & g_0
\end{pmatrix}\begin{pmatrix}
A^T & c_2\\
b_1^T & d_{21}
\end{pmatrix} = \Omega_0^T\begin{pmatrix}
T_0^T & 0
\end{pmatrix}. 
\end{align}
We remark that the sizes of $\Lambda_0$ and $\Omega_0$ are $2m_1$ and $2m_2$, respectively. 
In addition, we can reformulate \eqref{zeroGzu} and \eqref{tgdef} as 
\begin{align*}
\begin{pmatrix}
S^T_0 & f_0\\
S^T & f
\end{pmatrix}\begin{pmatrix}
A & b_2\\
c_1^T & d_{12}
\end{pmatrix} &= \begin{pmatrix}
\Lambda_0^T & \\
& \Lambda^T
\end{pmatrix}
\begin{pmatrix}
S_0^T & 0\\
S^T & 0
\end{pmatrix}, \\
\begin{pmatrix}
T^T_0 & g_0\\
T^T & g
\end{pmatrix}\begin{pmatrix}
A^T & c_1\\
b_2^T & d_{21}
\end{pmatrix} &= \begin{pmatrix}
\Omega_0^T & \\
& \Omega^T
\end{pmatrix}
\begin{pmatrix}
T_0^T & 0\\
T^T & 0
\end{pmatrix}. 
\end{align*}
As we have assumed that $d_{12}\neq0$ and $d_{21}\neq0$, it follows from Lemma \ref{lem:gplant} that $(S_0, S)$, $(T_0, T)\in\mathbb{R}^{n\times n}$ are non-singular, and we have 
\[
\begin{pmatrix}
b_2\\
d_{12}\\
0
\end{pmatrix}^{\perp} = \begin{pmatrix}
S_0 &S & 0\\
f_0^T &f^T & 0\\
0^T &0^T & 1
\end{pmatrix} \mbox{ and }\begin{pmatrix}
c_2\\
d_{21}\\
0
\end{pmatrix}^{\perp} = \begin{pmatrix}
T_0&T & 0\\
g_0^T&g^T & 0\\
0^T &0^T & 1
\end{pmatrix}. 
\]
By following similar lines leading to \eqref{LMI2},
we can reformulate \eqref{LMI} as in
\begin{align}\label{LMI_img2}
&\left\{
\begin{array}{cl}
\displaystyle\inf_{\gamma, \hat{X}_{ij}, \hat{Y}_{ij}} & \gamma\\
\mbox{subject to} & - \begin{pmatrix}
\He\left(\begin{pmatrix}
\Lambda^T_0 & \\
& \Lambda^T
\end{pmatrix}
\hat{X}\right) -\gamma \begin{pmatrix}
f_0\\
f
\end{pmatrix} \begin{pmatrix}
f_0\\
f
\end{pmatrix}^T &\hat{h}_1 \\
\hat{h}_1^T&-\gamma
\end{pmatrix} \in\mathbb{S}_+^{n+1}, \\
& - \begin{pmatrix}
\He\left(\begin{pmatrix}
\Omega^T_0 & \\
& \Omega^T
\end{pmatrix}
\hat{Y}\right) -\gamma \begin{pmatrix}
g_0\\
g
\end{pmatrix} \begin{pmatrix}
g_0\\
g
\end{pmatrix}^T &\hat{h}_2 \\
\hat{h}_2^T&-\gamma
\end{pmatrix} \in\mathbb{S}_+^{n+1}, \\
& \begin{pmatrix}
\hat{X} & -\hat{J}^T\\
-\hat{J} & \hat{Y}
\end{pmatrix}\in\mathbb{S}^{2n}_+, 
\end{array}
\right.
\end{align}
where we define $\hat{X}$, $\hat{Y}$, $\hat{h}_1$ and $\hat{h}_2$ as follows: 
\begin{align*}
\hat{X} &= \begin{pmatrix}
S_0^T\\
S^T
\end{pmatrix}X\begin{pmatrix}
S_0 & S
\end{pmatrix}, \hat{Y} = \begin{pmatrix}
T_0^T\\
T^T
\end{pmatrix}Y\begin{pmatrix}
T_0 & T
\end{pmatrix}, \hat{J} = \begin{pmatrix}
T^T_0\\
T^T
\end{pmatrix}\begin{pmatrix}
S_0 & S
\end{pmatrix}, J = T^TS, \\
\hat{h}_1^T &= \begin{pmatrix}
h_{10}^T & h_1^T
\end{pmatrix}, h_1^T = b_1^TS + d_{11}f^T, 
\hat{h}_2^T = \begin{pmatrix}
h_{20}^T & h_2^T
\end{pmatrix}, h_2^T = c_1^TT + d_{11}g^T, \\
h_{10}^T& = \begin{pmatrix}
(h_{10}^r)_1 & (h_{10}^i)_1 & \cdots & (h_{10}^r)_{m_1} & (h_{10}^i)_{m_1}
\end{pmatrix} = b_1^TS_0 + d_{11}f_0^T, \\
h_{20}^T& = \begin{pmatrix}
(h_{20}^r)_1 & (h_{20}^i)_1 & \cdots & (h_{20}^r)_{m_2} & (h_{20}^i)_{m_2}
\end{pmatrix} = c_1^TT_0 + d_{11}g_0^T. 
\end{align*}
Applying Lemma \ref{lem:dual}, its dual is formulated as follows: 
\begin{align}\label{dual_img2}
&\left\{
\begin{array}{cl}
\displaystyle\sup & \begin{pmatrix}
& \hat{h}_1\\
\hat{h}_1^T &  
\end{pmatrix}\bullet Z+ \begin{pmatrix}
& \hat{h}_2\\
\hat{h}_2^T & 
\end{pmatrix}\bullet V + \begin{pmatrix}
& \hat{J}^T\\
\hat{J} & 
\end{pmatrix}\bullet W\\
\mbox{subject to} & \begin{pmatrix}
f_0f_0^T & * & \\
f f_0^T & ff^T& \\
& & 1
\end{pmatrix}\bullet Z + \begin{pmatrix}
g_0g_0^T & * & \\
gg_0^T & gg^T& \\
& & 1
\end{pmatrix}\bullet V = 1, \\ 
& W_{11} = \He\left(\begin{pmatrix}
\Lambda_0 & \\
 & \Lambda
\end{pmatrix}
\begin{pmatrix}
Z_{11} & *\\
Z_{21} &Z_{22}
\end{pmatrix}
\right), W=\begin{pmatrix}
W_{11} & *\\
W_{21} & W_{22}
\end{pmatrix}\in\mathbb{S}^{2n}_+, \\
&W_{22} = \He\left(
\begin{pmatrix}
\Omega_0 & \\
 & \Omega
\end{pmatrix}
\begin{pmatrix}
V_{11} & *\\
V_{21} &V_{22}
\end{pmatrix}
\right), \\
& Z = \begin{pmatrix}
Z_{11} & *&*\\
Z_{21} & Z_{22} & *\\
Z_{31} & Z_{32} & Z_{33}
\end{pmatrix}\in\mathbb{S}^{n+1}_+, V = \begin{pmatrix}
V_{11} & *&*\\
V_{21} & V_{22} & *\\
V_{31} & V_{32} & V_{33}
\end{pmatrix}\in\mathbb{S}^{n+1}_+. 
\end{array}
\right.
\end{align}
We remark that the duality gap between \eqref{LMI_img2} and \eqref{dual_img2} is zero, and \eqref{dual_img2} has an optimal solution. In fact, these facts follows from Theorem \ref{thm:duality} because the proof on the zero duality gap between \eqref{LMI} and its dual in Theorem \ref{thm:duality} is independent in the computation of the perpendicular matrices in \eqref{LMI}. 

The following lemma is useful to reduce \eqref{dual_img2}. We give a proof in \ref{subapp:lemma9}. 
\begin{lemma}\label{reduction}
Any feasible solution $(W_{ij}, Z_{ij}, V_{ij})$ has the form of  
\begin{align}
\label{Z11}
Z_{11} &= \diag(z_1, z_1, \ldots, z_{m_1}, z_{m_1}), 
Z_{21} = O_{(n-2m_1)\times 2m_1}, \\
\label{V11}
V_{11} &= \diag(v_1, v_1, \ldots, v_{m_2}, v_{m_2}), 
V_{21} = O_{(n-2m_2)\times 2m_1}, \\
\label{W11}
(W_{11})_{ij} &= 0 \ (i, j=1, \ldots, 2m_1), (W_{22})_{ij} = 0 \ (i, j=1, \ldots, 2m_2), \\
(W_{21})_{ij} &= 0 \ (i = 1, \ldots, n, j =1, \ldots, 2m_1 \mbox{ and } i=1, \ldots, 2m_2, j=2m_1+1, \ldots, n), \nonumber
\end{align}
where $\diag(a_1, \ldots, a_n)$ stands for the diagonal matrix with the diagonal elements $a_1, \ldots, a_n$. 
\end{lemma}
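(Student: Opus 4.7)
The plan is to exploit two structural facts repeatedly: $\Lambda_0=\diag(F(\lambda_1),\ldots,F(\lambda_{m_1}))$ and $\Omega_0$ are skew-symmetric (because $F(\lambda_j)^T=-F(\lambda_j)$), and inside a positive semidefinite matrix a vanishing diagonal block forces the corresponding rows and columns to vanish.

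First I would address $Z_{11}$ and the top-left $2m_1\times 2m_1$ block of $W_{11}$, which I denote $W_{11}^{00}$. Extracting this block from the constraint $W_{11}=\He\left(\diag(\Lambda_0,\Lambda)\begin{pmatrix}Z_{11}&Z_{21}^T\\ Z_{21}&Z_{22}\end{pmatrix}\right)$ and using skew-symmetry of $\Lambda_0$ reduces $W_{11}^{00}$ to the commutator $\Lambda_0 Z_{11}-Z_{11}\Lambda_0$, which automatically has zero trace. Since $W_{11}^{00}$ is also positive semidefinite as a principal submatrix of $W_{11}\succeq O$, it must vanish, giving the vanishing part of \eqref{W11} and the relation $\Lambda_0 Z_{11}=Z_{11}\Lambda_0$.

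The main obstacle will be to deduce the precise form of $Z_{11}$ from this commutation. I would partition $Z_{11}$ into $2\times 2$ blocks $M_{jk}$ conformably with $\Lambda_0$; the commutation becomes the Sylvester equation $F(\lambda_j)M_{jk}=M_{jk}F(\lambda_k)$. Under the distinctness assumption on $\{\lambda_j,\bar{\lambda}_j\}_{j=1}^{m_1}$, the spectra of $F(\lambda_j)$ and $F(\lambda_k)$ are disjoint for $j\neq k$, so $M_{jk}=O$; for $j=k$, a direct $2\times 2$ computation shows that the only symmetric matrices commuting with $F(\lambda_j)$ (which has distinct imaginary eigenvalues $\pm\sqrt{-1}\,\Im(\lambda_j)$, nonzero because $\lambda_j\neq 0$) are scalar multiples of $I_2$, yielding \eqref{Z11}.

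For $Z_{21}$, the vanishing of $W_{11}^{00}$ together with $W_{11}\succeq O$ forces the off-diagonal block $W_{11}^{10}=O$, which reduces the $(1,0)$ block of the same Hermitization identity to the Sylvester equation $\Lambda Z_{21}-Z_{21}\Lambda_0=O$. Since $\Lambda$ has no eigenvalues on the imaginary axis while all eigenvalues of $\Lambda_0$ are imaginary, the two spectra are disjoint and Sylvester's theorem gives $Z_{21}=O$. The identical argument applied to $V$, $\Omega_0$, and $\Omega$ yields the stated structure of $V_{11}$, $V_{21}$, and the vanishing part of $W_{22}$ in \eqref{V11}--\eqref{W11}. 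Finally, these vanishing diagonal sub-blocks in $W_{11}$ and $W_{22}$, combined with $W\succeq O$, force the first $2m_1$ columns and the first $2m_2$ rows of $W_{21}$ to be zero, giving the stated zero pattern of $W_{21}$.
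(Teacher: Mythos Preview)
Your proposal is correct and follows essentially the same approach as the paper's proof: both establish $\He(\Lambda_0 Z_{11})=O$ from positive semidefiniteness, deduce the block-diagonal structure of $Z_{11}$ and the vanishing of $Z_{21}$ via Sylvester equations with disjoint spectra, and then propagate zeros through $W$ using $W\succeq O$. Your trace-of-commutator observation (that $W_{11}^{00}=\Lambda_0 Z_{11}-Z_{11}\Lambda_0$ has zero trace and is therefore zero once it is positive semidefinite) is a pleasant streamlining of the paper's explicit $2\times 2$ entrywise computation, but the overall logic is identical.
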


\begin{remark}\label{remark2}
We have reduced the size of the matrix variable $W$ in \eqref{dual_img2} in Lemma \ref{reduction}. This reduction also corresponds to the facial reduction for SDP as well as Lemma \ref{lem:dstructure}.  In addition, we can also apply Lemma \ref{lem:dstructure} when at least either $G_{zu}$ or $G_{yw}$ has stable invariant zeros. 
\end{remark}

By using \eqref{Z11}, \eqref{V11} and \eqref{W11}, 
we can reformulate \eqref{dual_img2} as follows:  
\begin{align}\label{dual_img3}
&\left\{
\begin{array}{cl}
\displaystyle\sup & \begin{pmatrix}
& &h_{10}\\
& & h_1\\
h_{10}^T & h_1^T &  
\end{pmatrix}\bullet Z+ \begin{pmatrix}
& &h_{20}\\
& & h_2\\
h_{20}^T & h_2^T &  
\end{pmatrix}\bullet V + \begin{pmatrix}
& J^T\\
J & 
\end{pmatrix}\bullet \hat{W}\\
\mbox{subject to} &  
\begin{pmatrix}
f_0f_0^T & f_0f^T & \\
f f_0^T & ff^T& \\
& & 1
\end{pmatrix}\bullet Z + \begin{pmatrix}
g_0g_0^T & g_0g^T & \\
gg_0^T & gg^T& \\
& & 1
\end{pmatrix}\bullet V = 1, \\ 
& \hat{W}_{11} = \He\left( \Lambda Z_{22}\right), \hat{W}_{22} = \He\left(\Omega V_{22}\right),  \hat{W}=\begin{pmatrix}
\hat{W}_{11} & \hat{W}_{21}^T\\
\hat{W}_{21} & \hat{W}_{22}
\end{pmatrix}\in\mathbb{S}^{n_0}_+, \\
& Z = \begin{pmatrix}
Z_{11} & O & Z_{31}^T\\
O & Z_{22} & Z_{32}^T\\
Z_{31} & Z_{32} & Z_{33}
\end{pmatrix}\in\mathbb{S}^{n+1}_+, V = \begin{pmatrix}
V_{11} & O & V_{31}^T\\
O & V_{22} & V_{32}^T\\
V_{31} & V_{32} & V_{33}
\end{pmatrix}\in\mathbb{S}^{n+1}_+, \\
& Z_{11}= \diag(z_1, z_1, \ldots, z_{m_1}, z_{m_1}), z_j\in\mathbb{R} \ (j=1, \ldots, m_1),\\
&V_{11}=\diag(v_1, v_1, \ldots, v_{m_2}, v_{m_2}), v_j\in\mathbb{R} \ (j=1, \ldots, m_2),
\end{array}
\right.
\end{align}
where $n_0 = 2n - 2(m_1 +m_2)$. 
Let $\mathcal{F}$ be the feasible region of \eqref{dual_img3}. 
The next lemma shows the dual of \eqref{dual_img3}. We give a proof in \ref{subapp:lemma10}. 
\begin{lemma}\label{lem:dualdual}
The dual of \eqref{dual_img3} can be formulated as follows: 
\begin{align}\label{LMI_img3}
&\left\{
\begin{array}{cl}
\displaystyle\inf_{\gamma, \hat{X}, \hat{Y}, U_{ij}^X, U_{ij}^Y} & \gamma\\
\mbox{subject to} & - \begin{pmatrix}
U_{11}^X -\gamma f_0f_0^T &(U_{21}^X)^T-\gamma f_0f^T &h_{10} \\
U_{21}^X-\gamma ff^T_0&\He(\Lambda^T \hat{X})-\gamma ff^T &h_{1} \\
h_{10}^T & h_{1}^T& -\gamma
\end{pmatrix} \in\mathbb{S}_+^{n+1}, \\
& - \begin{pmatrix}
U_{11}^Y -\gamma g_0g_0^T &(U_{21}^Y)^T-\gamma g_0g^T &h_{20} \\
U_{21}^Y-\gamma gg^T_0&\He(\Omega^T \hat{Y})-\gamma gg^T &h_{2} \\
h_{20}^T & h_{2}^T& -\gamma
\end{pmatrix} \in\mathbb{S}_+^{n+1}, \\
&\begin{pmatrix}
\hat{X} & -J^T\\
-J & \hat{Y}
\end{pmatrix}\in\mathbb{S}^{n_0}_+, U_{11}^X\in\mathbb{S}^{2m_1}, U_{11}^Y\in\mathbb{S}^{2m_2}, \\
& U_{21}^X\in\mathbb{R}^{(n-2m_1)\times 2m_1}, U_{21}^Y\in\mathbb{R}^{(n-2m_2)\times 2m_2}, \\
& (U_{11}^X)_{2j-1, 2j-1} + (U_{11}^X)_{2j, 2j} = 0 \ (j=1, \ldots, m_1), \\
& (U_{11}^Y)_{2j-1, 2j-1} + (U_{11}^Y)_{2j, 2j} = 0 \ (j=1, \ldots, m_2).
\end{array}
\right.
\end{align}
Moreover, the duality gap between \eqref{LMI_img3} and \eqref{dual_img3} is zero, and \eqref{dual_img3} has an optimal solution. 
\end{lemma}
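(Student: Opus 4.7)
The plan is to compute the Lagrangian dual of \eqref{dual_img3} directly by standard SDP duality, and then to transfer attainment and zero duality gap from the already-established pair \eqref{LMI_img2}--\eqref{dual_img2}. First I would introduce Lagrange multipliers: a scalar $\gamma$ for the normalization, symmetric matrices $\hat{X}$ and $\hat{Y}$ for the Lyapunov-type equalities $\hat{W}_{11}=\He(\Lambda Z_{22})$ and $\hat{W}_{22}=\He(\Omega V_{22})$, rectangular $U_{21}^X$ and $U_{21}^Y$ for the zero off-diagonal block constraints on $Z$ and $V$, and symmetric $U_{11}^X$ and $U_{11}^Y$ for the structural constraints pinning $Z_{11}$ and $V_{11}$ to the block-diagonal-pair subspaces.

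The main computation is to show that the annihilator of the subspace $\{\diag(z_1,z_1,\ldots,z_{m_1},z_{m_1}) : z_j\in\mathbb{R}\}\subset\mathbb{S}^{2m_1}$ under the trace inner product is exactly $\{U\in\mathbb{S}^{2m_1} : U_{2j-1,2j-1}+U_{2j,2j}=0,\ j=1,\ldots,m_1\}$, which produces the nonstandard trace-pair conditions appearing in \eqref{LMI_img3}. Collecting the coefficients of each block of $Z$, $V$, and $\hat{W}$ in the Lagrangian and requiring that each coefficient lie in the appropriate dual cone (PSD for the primal PSD variables) then reproduces the three LMI conditions of \eqref{LMI_img3} verbatim. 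For the existence of an optimal solution of \eqref{dual_img3}, I would use Lemma \ref{reduction}: its structure is the template by which each feasible solution of \eqref{dual_img2} restricts to a feasible solution of \eqref{dual_img3} with the same objective, and every feasible solution of \eqref{dual_img3} lifts back with zero padding; since \eqref{dual_img2} attains its optimum by Theorem \ref{thm:duality}, so does \eqref{dual_img3}. For the zero duality gap, I would verify Slater's condition for \eqref{LMI_img3} by starting from a strictly feasible triple of \eqref{LMI} (which exists by Theorem \ref{thm:duality}), transporting it through the definition of \eqref{LMI_img2} to obtain $(\gamma,\hat{X},\hat{Y})$, and then setting $U_{11}^X:=\He(\Lambda_0^T\hat{X}_{11})$ and $U_{21}^X:=\Lambda^T\hat{X}_{21}+\hat{X}_{21}\Lambda_0$ (analogously on the $Y$ side); because $\Lambda_0$ is block-skew-symmetric with $2\times 2$ blocks, $U_{11}^X$ automatically satisfies the trace-pair condition, so this produces a strictly feasible point of \eqref{LMI_img3}.

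The main obstacle will be the Lagrangian bookkeeping when the primal variables $Z$ and $V$ are constrained to non-coordinate affine subspaces of $\mathbb{S}^{n+1}$, together with additional zero-block constraints: the trace-pair identity must be carefully matched to the coefficient of $Z_{11}$ (and $V_{11}$), the rectangular multipliers $U_{21}^X,U_{21}^Y$ must absorb the coupling between $Z_{22}$ and $Z_{31}$ in the PSD constraint on $Z$, and every block of the final $(n+1)\times(n+1)$ LMI in \eqref{LMI_img3} must be verified to arise correctly from dualizing a specific block of the original primal data.
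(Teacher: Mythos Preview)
Your proposal is correct and follows essentially the same line as the paper. The paper also derives \eqref{LMI_img3} via a Lagrangian computation (it happens to start from \eqref{LMI_img3} and show its Lagrange dual is \eqref{dual_img3}, whereas you start from \eqref{dual_img3}; the two are equivalent since the bidual of an SDP is the primal). Your identification of the trace-pair constraints on $U_{11}^X$ and $U_{11}^Y$ as the annihilator of the block-diagonal-pair subspace is exactly the mechanism implicit in the paper's Lagrangian bookkeeping.

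The one point where your route differs is the justification of zero duality gap. The paper defers this to the pattern of Appendix~\ref{subapp:proof3}: it invokes the alternatives theorem (Theorem~\ref{thm:fr}) to show that the certificate system for non-strict-feasibility of \eqref{LMI_img3} has only the trivial solution, relying on the controllability facts in Lemma~\ref{lem:cntobs}. You instead construct a strictly feasible point directly, by restricting a strictly feasible solution of \eqref{LMI_img2} and setting $U_{11}^X=\He(\Lambda_0^T\hat{X}_{11})$, $U_{21}^X=\Lambda^T\hat{X}_{21}+\hat{X}_{21}\Lambda_0$. Your observation that the block-skew structure of $\Lambda_0$ forces each diagonal $2\times 2$ block of $\He(\Lambda_0^T\hat{X}_{11})$ to have trace zero is correct and makes this construction transparent; it also avoids re-proving the controllability lemma in the present setting. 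Your argument for attainment in \eqref{dual_img3} via the bijection with \eqref{dual_img2} furnished by Lemma~\ref{reduction} is likewise valid and slightly more direct than deducing it from strong duality.
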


We focus on the first inequality constraint in \eqref{LMI_img3} and can
see that all the off-diagonal elements of $U^X_{11}$ and all the
elements of $U_{21}^X$ do not appear in the other constraints in
\eqref{LMI_img3}. Hence it is enough to compute them after finding $\gamma$, $\hat{X}$
and all the diagonal elements of $U_{11}^X$. Proposition \ref{reduction_img}
shown below gives a simplification of LMI problem \eqref{LMI_img3} based on this
idea. For this, we use the following lemma. This lemma plays an essential role in the proof of Proposition \ref{reduction_img} and can be directly proved in a similar manner to the proof in \cite[Appendix D]{Waki18b}
\begin{lemma}\label{mcomp}
Let $k, \ell$ be positive integers. We assume that the two matrices $\left(\begin{smallmatrix}
U_{11} & U_{31}^T\\
U_{31} & U_{33}
\end{smallmatrix}\right)\in\mathbb{S}^{k+1}$ and $\left(\begin{smallmatrix}
U_{22} & U_{32}^T\\
U_{32} & U_{33}
\end{smallmatrix}\right)\in\mathbb{S}^{\ell+1}$ are positive semidefinite. 
Then there exists $U_{21}\in\mathbb{R}^{\ell\times k}$ such that the matrix $\left(\begin{smallmatrix}
U_{11} & U_{21}^T&U_{31}^T\\
U_{21} & U_{22} &U_{32}^T\\
U_{31}& U_{32} & U_{33}
\end{smallmatrix}\right)$ is positive semidefinite. 
\end{lemma}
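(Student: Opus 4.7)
The plan is to use the Schur complement with respect to the scalar $(3,3)$-block $U_{33}$, splitting into two cases according to whether $U_{33}$ vanishes.

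First I would dispose of the degenerate case $U_{33}=0$. The hypothesis that $\left(\begin{smallmatrix} U_{11} & U_{31}^T \\ U_{31} & U_{33}\end{smallmatrix}\right)\succeq O$ together with a zero diagonal entry forces the corresponding row and column to vanish, i.e.\ $U_{31}=0$; likewise $U_{32}=0$. Then $U_{21}=O$ works, since the completed matrix is block diagonal with PSD blocks $U_{11}$, $U_{22}$, and $0$.

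Next, in the main case $U_{33}>0$, define
\[
U_{21}:=\frac{1}{U_{33}}\,U_{32}^T U_{31}.
\]
Applying the Schur complement to the full $(k+\ell+1)\times(k+\ell+1)$ matrix with pivot block $U_{33}$ reduces positive semidefiniteness of the completion to
\[
\begin{pmatrix} U_{11} & U_{21}^T \\ U_{21} & U_{22} \end{pmatrix}
-\frac{1}{U_{33}}\begin{pmatrix} U_{31}^T \\ U_{32}^T \end{pmatrix}\begin{pmatrix} U_{31} & U_{32} \end{pmatrix}\succeq O.
\]
With the above choice of $U_{21}$ the off-diagonal blocks cancel exactly, and the difference becomes the block-diagonal matrix
\[
\begin{pmatrix} U_{11}-U_{31}^T U_{31}/U_{33} & O \\ O & U_{22}-U_{32}^T U_{32}/U_{33} \end{pmatrix}.
\]
Each diagonal block is the Schur complement (with respect to $U_{33}$) of one of the two hypothesis matrices, hence is PSD by assumption, so the completion is PSD.

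The only real subtlety is the degenerate case $U_{33}=0$, but this is handled immediately by the standard fact that a positive semidefinite matrix with a zero diagonal entry has the corresponding row and column equal to zero; once that is noted, the two cases can even be unified by reading $1/U_{33}$ as the Moore--Penrose pseudoinverse. Because the argument is entirely a manipulation of block PSD matrices through the Schur complement, there is no substantive obstacle beyond this case distinction.
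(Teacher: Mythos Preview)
Your proof is correct. The paper itself does not prove this lemma; it simply states that it ``can be directly proved in a similar manner to the proof in \cite[Appendix D]{Waki18b}'' and moves on, so there is no in-paper argument to compare against. Your Schur-complement approach with the explicit completion $U_{21}=U_{32}^T U_{31}/U_{33}$ (and the trivial $U_{33}=0$ case) is the standard way to handle such completion problems and is almost certainly what the cited reference does as well.
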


\begin{proposition}\label{reduction_img}
The optimal value of \eqref{LMI_img3} is equivalent to the following LMI problem:
\begin{align}\label{LMI_img5}
&\left\{
\begin{array}{cl}
\displaystyle\inf_{\begin{subarray}{c}\gamma, \hat{X}, \hat{Y}, \\ u_j^X, u_j^Y\end{subarray}} & \gamma\\
\mbox{subject to} & - \begin{pmatrix}
\He(\Lambda^T \hat{X})-\gamma f_{+}f_+^T &h_{1} \\
h_{1}^T& -\gamma
\end{pmatrix} \in\mathbb{S}_+^{n-2m_1+1}, \\
& - \begin{pmatrix}
\He(\Omega^T \hat{Y})-\gamma gg^T &h_{2} \\
h_{2}^T& -\gamma
\end{pmatrix} \in\mathbb{S}_+^{n-2m_2+1}, \begin{pmatrix}
\hat{X} & -J^T\\
-J & \hat{Y}
\end{pmatrix}\in\mathbb{S}^{n_0}_+, \\
& \begin{pmatrix}
\gamma (f^i)_{j}^2 -u_{j}^X& *\\
-(h_{10}^i)_{j} & \gamma
\end{pmatrix}\in\mathbb{S}^2_+, \begin{pmatrix}
\gamma (f^r)_{j}^2 +u_{j}^X&*\\
-(h_{10}^r)_{j} & \gamma
\end{pmatrix}\in\mathbb{S}^2_+\ (j=1, \ldots, m_1), \\
& \begin{pmatrix}
\gamma (g^i)_{j}^2 -u_{j}^Y&*\\
-(h_{20}^i)_{j} & \gamma
\end{pmatrix}\in\mathbb{S}^2_+, \begin{pmatrix}
\gamma (g^r)_{j}^2 +u_{j}^Y& *\\
-(h_{20}^r)_{j} & \gamma
\end{pmatrix}\in\mathbb{S}^2_+ \ (j=1, \ldots, m_2). 
\end{array}
\right.
\end{align}
\end{proposition}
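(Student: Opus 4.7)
The plan is to establish equality of the optimal values of \eqref{LMI_img3} and \eqref{LMI_img5} by proving both inequalities between them.

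\textbf{Direction opt\eqref{LMI_img3} $\geq$ opt\eqref{LMI_img5}.} Take any feasible $(\gamma,\hat X,\hat Y,U^X_{11},U^X_{21},U^Y_{11},U^Y_{21})$ of \eqref{LMI_img3} and set $u^X_j:=(U^X_{11})_{2j,2j}$, $u^Y_j:=(U^Y_{11})_{2j,2j}$; by the diagonal-sum constraints in \eqref{LMI_img3} we then automatically have $(U^X_{11})_{2j-1,2j-1}=-u^X_j$ and similarly for $Y$. The first LMI of \eqref{LMI_img5} is the principal submatrix of the first LMI of \eqref{LMI_img3} on the index set $\{2m_1+1,\dots,n,n+1\}$; each $2\times 2$ condition attached to $f^r_j$ (resp.\ $f^i_j$) is the principal submatrix on $\{2j-1,n+1\}$ (resp.\ $\{2j,n+1\}$); the coupling constraint on $(\hat X,\hat Y)$ is identical in both problems. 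Since positive semidefiniteness descends to principal submatrices, $(\gamma,\hat X,\hat Y,u^X_j,u^Y_j)$ is feasible for \eqref{LMI_img5} with the same objective $\gamma$.

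\textbf{Direction opt\eqref{LMI_img3} $\leq$ opt\eqref{LMI_img5}.} Given feasible $(\gamma,\hat X,\hat Y,\{u^X_j\},\{u^Y_j\})$ of \eqref{LMI_img5}, I would construct $U^X_{11}, U^X_{21}$ (the pair $U^Y_{11}, U^Y_{21}$ by a symmetric argument) so that the first LMI of \eqref{LMI_img3} holds. The construction proceeds by iterated application of Lemma \ref{mcomp}, always taking the shared corner $U_{33}$ to be the scalar $\gamma$ sitting at position $(n+1,n+1)$:
\begin{enumerate}
\item For each $j\in\{1,\ldots,m_1\}$, apply Lemma \ref{mcomp} with $k=\ell=1$ to the two $2\times 2$ PSD matrices associated with $f^r_j$ and $f^i_j$ in \eqref{LMI_img5}. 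The resulting scalar completion yields a $3\times 3$ PSD matrix whose first two diagonal entries are $\gamma(f^r_j)^2+u^X_j$ and $\gamma(f^i_j)^2-u^X_j$, and whose last row reads $(-(h^r_{10})_j,\,-(h^i_{10})_j,\,\gamma)$.
\item Fuse these $m_1$ blocks iteratively via Lemma \ref{mcomp}, always sharing the scalar $\gamma$, producing a $(2m_1+1)\times(2m_1+1)$ PSD matrix equal to $-\left(\begin{smallmatrix}U^X_{11}-\gamma f_0 f_0^T & h_{10}\\ h_{10}^T & -\gamma\end{smallmatrix}\right)$. The diagonal pattern chosen in step 1 automatically enforces $(U^X_{11})_{2j-1,2j-1}+(U^X_{11})_{2j,2j}=-u^X_j+u^X_j=0$, so the structural constraint of \eqref{LMI_img3} on $U^X_{11}$ is met.
\item Apply Lemma \ref{mcomp} once more, with $k=2m_1$ and $\ell=n-2m_1$, to fuse the matrix from step 2 with the first $(n-2m_1+1)\times(n-2m_1+1)$ LMI of \eqref{LMI_img5}. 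The resulting off-diagonal $2m_1\times(n-2m_1)$ block $M$ determines $U^X_{21}$ via $-(U^X_{21})^T+\gamma f_0 f^T=M$, and the full $(n+1)\times(n+1)$ PSD matrix coincides with the first LMI of \eqref{LMI_img3}.
\end{enumerate}
Since the constraint $\left(\begin{smallmatrix}\hat X & -J^T\\ -J & \hat Y\end{smallmatrix}\right)\in\bbS^{n_0}_+$ appears unchanged in both problems, the extended tuple is feasible for \eqref{LMI_img3} with the same $\gamma$.

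\textbf{Main obstacle.} The delicate part lies in the bookkeeping of steps 2 and 3: I must verify that at each fusion the shared corner $\gamma$ persists so that Lemma \ref{mcomp} remains applicable, and that the specific diagonal pattern forced in step 1 automatically realizes the diagonal-sum constraint on $U^X_{11}$ required by \eqref{LMI_img3} rather than over-determining the system. All off-diagonal entries of $U^X_{11}$ and the entire $U^X_{21}$ are free variables of \eqref{LMI_img3} without additional structural constraints, so the completions provided by Lemma \ref{mcomp} are admissible as is.
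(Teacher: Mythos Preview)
Your proposal is correct and follows essentially the same approach as the paper's proof: one direction by passing to principal submatrices, and the other by iterated application of Lemma~\ref{mcomp} (first fusing the $2\times 2$ blocks into a $(2m_1+1)\times(2m_1+1)$ block with the prescribed diagonal, then fusing with the $(n-2m_1+1)$-block to produce $U^X_{21}$). The bookkeeping you flag in your ``Main obstacle'' is exactly what the paper carries out explicitly, and your sign conventions for $u^X_j$ and $U^X_{21}$ match those in the paper's argument.
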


\begin{proof}
Since the matrix in the left side of each constraint in \eqref{LMI_img5} is a submatrix in the left side of each constraint in \eqref{LMI_img3}, any feasible solution of \eqref{LMI_img3} is also feasible for \eqref{LMI_img5} with the same objective value. It is sufficient to prove that any feasible solution of \eqref{LMI_img5} is feasible for \eqref{LMI_img3} with the same objective value. Let $(\gamma, \hat{X}, \hat{Y}, u_j^X, u_j^Y)$ be a feasible solution of \eqref{LMI_img5}. Then the solution satisfies 
\begin{align*}
\begin{pmatrix}
\gamma (f^i)_{j}^2 -u_{j}^X& -(h_{10}^i)_{j}\\
-(h_{10}^i)_{j} & \gamma
\end{pmatrix}&\in\mathbb{S}^2_+, \begin{pmatrix}
\gamma (f^r)_{j}^2 +u_{j}^X& -(h_{10}^r)_{j}\\
-(h_{10}^r)_{j} & \gamma
\end{pmatrix}\in\mathbb{S}^2_+\ (j=1, \ldots, m_1). 
\end{align*}
Applying Lemma \ref{mcomp} to the above matrices repeatedly, we see that there exists $v_{kj}\in\mathbb{R}$ \ $(j=1, \ldots, 2m_1, k=j+1, \ldots, 2m_1+1)$ such that the following matrix is positive semidefinite: 
\[
\begin{pmatrix}
\gamma (f^r)_1^2 +u_1^X & v_{21}&v_{31} &\cdots &v_{2m_1, 1} &-(h_{10}^r)_1\\
 v_{21}&\gamma (f^i)_1^2-u_1^X &v_{32} &\cdots &v_{2m_1, 2} &-(h_{10}^i)_1\\
v_{31}&v_{32}&\ddots &\ddots &\vdots &\vdots \\
\vdots&\vdots &\ddots &\gamma (f^r)_{m_1}^2+u_{m_1}^X &v_{2m_1, 2m_1-1} &-(h_{10}^r)_{m_1}\\
v_{2m_1, 1}&v_{2m_1, 2} &\cdots &v_{2m_1, 2m_1-1} &\gamma (f^i)_{m_1}^2 -u_{m_1}^X &-(h_{10}^i)_{m_1}\\
-(h_{10}^r)_1&-(h_{10}^i)_1 &\cdots &-(h_{10}^r)_{m_1}&-(h_{10}^i)_{m_1}& \gamma
\end{pmatrix}
\] 
We define $U^X_{11}\in\mathbb{S}^{2m_1}$ as follows: 
\[
(U^X_{11})_{kj} = \left\{
\begin{array}{cl}
-u_{(k+1)/2}^X & \mbox{ if } k = j \mbox{ and } k : \mbox{ odd number}, \\
u_{k/2}^X & \mbox{ if } k = j \mbox{ and } k : \mbox{ even number}, \\
\gamma (f_0)_{k}(f_0)_{j} - v_{kj}& \mbox{ if }k > j, \\
\gamma (f_0)_{k}(f_0)_{j} - v_{jk}& \mbox{ o.w.}
\end{array}
\right. 
\]
Then we have 
\begin{align}\label{eq1}
-\begin{pmatrix}
U_{11}^X -\gamma f_0f_0^T & h_{10}\\
h_{10}^T &-\gamma
\end{pmatrix}&\in\mathbb{S}^{2m_1+1}_+. 
\end{align}
Similarly, there exists $U_{11}^Y\in\mathbb{S}^{2m_2}$ such that 
$-\left(\begin{smallmatrix}
U^Y_{11} -\gamma g_0g_0^T & h_{20}\\
h_{20}^T &-\gamma
\end{smallmatrix}\right)\in\mathbb{S}^{2m_2+1}_+
$. Moreover, we have 
\begin{align}\label{eq2}
- \begin{pmatrix}
\He(\Lambda^T \hat{X})-\gamma ff^T &h_{1} \\
h_{1}^T& -\gamma
\end{pmatrix} &\in\mathbb{S}_+^{n-2m_1+1}. 
\end{align}
From \eqref{eq1}, \eqref{eq2} and Lemma \ref{mcomp}, there exists $Z_{21}^X\in\mathbb{R}^{(n-2m_1)\times 2m_1}$ such that
\[
-\begin{pmatrix}
U^X_{11}-\gamma f_0f_0^T &(Z_{21}^X)^T &h_{10} \\
Z_{21}^X & \He(\Lambda^T \hat{X})-\gamma f_{+}f_+^T &h_{1} \\
h_{10}^T & h_{1}^T & -\gamma
\end{pmatrix}\in\mathbb{S}^{n+1}_+. 
\]
As well as the above, we have $Z_{21}^Y\in\mathbb{R}^{(n-2m_2)\times 2m_2}$ that satisfies
\[
-\begin{pmatrix}
U^Y_{11}-\gamma g_0g_0^T &(Z_{21}^Y)^T &h_{20} \\
Z_{21}^Y & \He(\Omega^T \hat{Y})-\gamma gg^T &h_{2} \\
h_{20}^T & h_{2}^T & -\gamma
\end{pmatrix}\in\mathbb{S}^{n+1}_+. 
\]
We define $U^X_{21}$ and $U^Y_{21}$ by $U^X_{21} = Z_{21}^X + \gamma f_+f_0^T$ and $U^Y_{21} = Z_{21}^Y + \gamma g_+g_0^T$. Then the solution 
$(\gamma, \hat{X}_{22}, \hat{Y}_{22}, U^{X}_{11}, U^X_{21}, U^Y_{11}, U^Y_{21})$
is feasible for \eqref{LMI_img3}, and thus the optimal value of \eqref{LMI_img5} is equal to the optimal value of \eqref{LMI_img3}. 
\end{proof}

We can simplify \eqref{LMI_img5} by applying the following lemma:
\begin{lemma}\label{reduction_img2}
Let $f_1, f_2, h_1, h_2, \gamma\in\mathbb{R}$. There exists $p\in\mathbb{R}$ such that \begin{align}
\label{eq3}
\begin{pmatrix}
\gamma f_1^2 - p & -h_1\\
-h_1 & \gamma
\end{pmatrix}\in\mathbb{S}^2_+ \mbox{ and} \begin{pmatrix}
\gamma f_2^2 + p & -h_2\\
-h_2 & \gamma
\end{pmatrix}\in\mathbb{S}^2_+
\end{align}
if and only if $\gamma$ satisfies 
\begin{align}\label{result}
\gamma &\ge \sqrt{
\frac{h_1^2 + h_2^2}{f_1^2 + f_2^2}
}. 
\end{align}
\end{lemma}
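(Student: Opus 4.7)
The plan is to characterize each of the two matrix inequalities in \eqref{eq3} via the standard $2\times 2$ positive semidefiniteness criterion (non-negative diagonal and non-negative determinant), and then to recognize that the existence of a real $p$ satisfying both is equivalent to a one-dimensional interval being non-empty.

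First, observe that both matrices force $\gamma\ge 0$ from the $(2,2)$ entry. Assume temporarily that $\gamma>0$; then $\gamma f_1^2-p\ge 0$ is implied by the determinant condition $(\gamma f_1^2-p)\gamma\ge h_1^2$, so \eqref{eq3} reduces to the two scalar inequalities
\[
p\le \gamma f_1^2-\frac{h_1^2}{\gamma},\qquad p\ge \frac{h_2^2}{\gamma}-\gamma f_2^2.
\]
Such a $p$ exists if and only if the upper bound is at least the lower bound, i.e.
\[
\frac{h_2^2}{\gamma}-\gamma f_2^2\le \gamma f_1^2-\frac{h_1^2}{\gamma},
\]
which, multiplying through by $\gamma>0$ and rearranging, becomes $\gamma^2(f_1^2+f_2^2)\ge h_1^2+h_2^2$. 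Since $\gamma\ge 0$, this is exactly \eqref{result}.

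Second, I would dispose of the degenerate case $\gamma=0$ briefly: the $(2,2)$ entry being zero forces the off-diagonal entries to vanish, so necessarily $h_1=h_2=0$, and then $p=0$ satisfies both inequalities; on the other hand, \eqref{result} with $\gamma=0$ gives $\sqrt{(h_1^2+h_2^2)/(f_1^2+f_2^2)}=0$, which is equivalent to $h_1=h_2=0$ whenever $f_1^2+f_2^2>0$. Hence the two equivalences match at the boundary.

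There is no real obstacle here; the main thing to be careful about is the sign hypothesis $\gamma>0$ needed to clear denominators, and ensuring the diagonal-non-negativity parts of the $2\times 2$ PSD criterion are redundant once the determinant condition is imposed. The statement of the lemma implicitly assumes $f_1^2+f_2^2>0$ (otherwise the right-hand side of \eqref{result} is undefined), so this case need not be discussed separately.
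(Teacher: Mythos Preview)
Your proposal is correct and follows essentially the same route as the paper: handle $\gamma=0$ separately, and for $\gamma>0$ reduce each $2\times 2$ PSD condition via the determinant/Schur complement to the scalar bounds $-\gamma f_2^2+h_2^2/\gamma\le p\le \gamma f_1^2-h_1^2/\gamma$, whose compatibility is exactly \eqref{result}. The paper additionally exhibits the explicit midpoint $p=\tfrac12\bigl(\gamma(f_1^2-f_2^2)-(h_1^2-h_2^2)/\gamma\bigr)$ for the converse, but this is just a concrete witness for the non-empty interval you already identified.
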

\begin{proof}
If $\gamma = 0$, then $h_1=h_2=0$, and thus the equivalence is obvious.  
We assume $\gamma > 0$. We see that 
\eqref{eq3} holds if and only if
$-\gamma f^2_2 + \frac{h_2^2}{\gamma}\le p\le \gamma f_1^2 - \frac{h_1^2}{\gamma}$. 
We obtain \eqref{result} from this inequality. On the other hand, if \eqref{result} holds, then we define $p$ by
$p = \frac{1}{2}\left(\gamma ( f_1^2-f^2_2) - \frac{h_1^2 - h_2^2}{\gamma}\right)$. 
Then \eqref{eq3} holds by direct computation. 
\end{proof}
Applying Lemma \ref{reduction_img2} to \eqref{LMI_img5}, it can be reformulated as follows:
\begin{align}\label{LMI_img6}
&\left\{
\begin{array}{cl}
\displaystyle\inf_{\gamma, \hat{X}, \hat{Y}} & \gamma\\
\mbox{subject to} & - \begin{pmatrix}
\He(\Lambda^T \hat{X})-\gamma ff^T &h_{1} \\
h_{1}^T& -\gamma
\end{pmatrix} \in\mathbb{S}_+^{n-2m_1+1}, \\
& - \begin{pmatrix}
\He(\Omega^T \hat{Y})-\gamma gg^T &h_{2} \\
h_{2}^T& -\gamma
\end{pmatrix} \in\mathbb{S}_+^{n-2m_2+1}, \\ 
&\begin{pmatrix}
\hat{X} & -J^T\\
-J & \hat{Y}
\end{pmatrix}\in\mathbb{S}^{n_0}_+, \\
& \gamma \ge\displaystyle\max_{j=1, \ldots, m_1}\left\{
\left|\frac{(h_{10})_{j}}{f_{j}}\right|\right\}, \gamma\ge\displaystyle\max_{j=1, \ldots, m_2}\left\{\left|\frac{(h_{20})_{j}}{g_{j}}\right|\right\}. 
\end{array}
\right.
\end{align}
Here we used $(h_{10}^r)_j^2 + (h_{10}^i)_j^2 = |(h_{10})_j|^2$ and $(f^r)_{j}^2 + (f^i)_{j}^2= |f_j|^2$ etc for the last inequalities. 

It should be noted that the optimal values of optimization problems, which appear in Section \ref{sec:case3}, that is, \eqref{LMI_img2}, \eqref{dual_img2}, \eqref{dual_img3}, \eqref{LMI_img3}, \eqref{LMI_img5} and \eqref{LMI_img6} are equal to $\gamma^*$. In fact, the optimal value of \eqref{LMI_img2} is $\gamma^*$. Since \eqref{dual_img2} is the dual of \eqref{LMI_img2}, the optimal value of \eqref{dual_img2} is $\gamma^*$. In addition, it follows from Lemma \ref{lem:dualdual} that the optimal value of \eqref{LMI_img3} is equal to the optimal value of \eqref{dual_img3}. From Proposition \ref{reduction_img} and Lemma \ref{reduction_img2}, the optimal value of \eqref{LMI_img6} is equal to the optimal values of \eqref{LMI_img5} and \eqref{LMI_img3}. Hence all the optimal values of these optimization problems are equal to $\gamma^*$.

From \eqref{LMI_img6}, we see that 
\[
\gamma^* = \max\left\{\tilde{\gamma}, \max_{j=1, \ldots, m_1}\left\{
\left|\frac{(h_{10})_{j}}{f_{j}}\right|\right\}, \displaystyle\max_{j=1, \ldots, m_2}\left\{\left|\frac{(h_{20})_{j}}{g_{j}}\right|\right\}\right\}, 
\]
where $\tilde{\gamma}$ is the optimal value of the following LMI problem
\begin{align}\label{LMI_img7}
&\left\{
\begin{array}{cl}
\displaystyle\inf_{\gamma, \hat{X}, \hat{Y}} & \gamma\\
\mbox{subject to} & - \begin{pmatrix}
\He(\Lambda^T \hat{X})-\gamma ff^T &h_{1} \\
h_{1}^T& -\gamma
\end{pmatrix} \in\mathbb{S}_+^{n-2m_1+1}, \\
& - \begin{pmatrix}
\He(\Omega^T \hat{Y})-\gamma gg^T &h_{2} \\
h_{2}^T& -\gamma
\end{pmatrix} \in\mathbb{S}_+^{n-2m_2+1}, \\ 
&\begin{pmatrix}
\hat{X} & -J^T\\
-J & \hat{Y}
\end{pmatrix}\in\mathbb{S}^{n_0}_+. 
\end{array}
\right.
\end{align}
It should be noted that if $G_{zu}$ (resp. $G_{yw}$) has a stable invariant zero, then Theorem \ref{thm:case2} is available to obtain an expression of $\tilde{\gamma}$. Otherwise, Theorem \ref{thm:case1} is available. Therefore, we obtain the following theorem from \eqref{LMI_img6}:

\begin{theorem}\label{thm:case3}
Let us consider Case \ref{Case3} stated at the final part of Section \ref{sec:Hinf}. 
We assume that invariant zeros $\lambda_1, \ldots, \lambda_{m_1}$ in
 $G_{zu}$ and $\omega_1, \ldots, \omega_{m_2}$ in $G_{yw}$ of
 \eqref{SISO} exist on the imaginary axis.  
Let $\hat{\gamma}:=\lambda_{\max}(E)$ where 
$E$ is given by \eqref{gammacase2} in Theorem \ref{thm:case2}.  
Then the optimal value $\gamma^*$ of \eqref{LMI_img2} is equal to  
\[
 \max\left\{\hat{\gamma}, 
\left|\frac{s_j^Tb_1}{f_j} + d_{11}\right| \ (j=1, \ldots, m_1), \left|\frac{t_j^Tc_1}{g_j} + d_{11}\right| \ (j=1, \ldots, m_2)\right\}. 
\]
In particular, if all invariant zeros of $G_{zu}$ and $G_{yw}$ 
on the imaginary axis are not the eigenvalues of $A$, respectively, then $\gamma^*$ is equal to 
\[
 \max\left\{\hat{\gamma}, 
\left|G_{zw}(\lambda_j)\right| \ (j=1, \ldots, m_1), \left|G_{zw}(\omega_j)\right| \ (j=1, \ldots, m_2)\right\}. 
\]
\end{theorem}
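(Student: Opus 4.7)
The plan is to assemble three pieces, each of which is essentially set up by the preceding development. The development just before the theorem has already reduced the LMI \eqref{LMI_img2} to \eqref{LMI_img6} and observed that
\[
\gamma^{*} \;=\; \max\!\left\{\tilde{\gamma},\ \max_{j=1,\ldots,m_{1}}\!\left|\frac{(h_{10})_{j}}{f_{j}}\right|,\ \max_{j=1,\ldots,m_{2}}\!\left|\frac{(h_{20})_{j}}{g_{j}}\right|\right\},
\]
where $\tilde{\gamma}$ denotes the optimal value of \eqref{LMI_img7}. So the task reduces to two identifications: (i) $\tilde{\gamma}=\hat{\gamma}$, and (ii) each scalar ratio equals the quantity in the theorem statement, followed by a conversion to $|G_{zw}(\cdot)|$ values under the extra assumption about $\lambda(A)$.

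For step (i), I would point out that \eqref{LMI_img7} has precisely the structure of the reduced LMI problem \eqref{LMI2} associated with the generalized plant whose invariant zeros of $G_{zu}$ and $G_{yw}$ are exactly those lying off the imaginary axis (encoded by $\Lambda$, $\Omega$ and the real null-vector matrices $S$, $f$, $T$, $g$, with $J=T^{T}S$ and $h_{1}$, $h_{2}$ defined as in Section~\ref{subsec:reduction}). Since, by the hypotheses of Case \ref{Case3}, all such remaining invariant zeros avoid the imaginary axis, Theorem \ref{thm:case2} applies directly and yields $\tilde{\gamma}=\lambda_{\max}(E)=\hat{\gamma}$, with $E$ the matrix given by \eqref{gammacase2}. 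If no stable invariant zeros happen to be present, Theorem \ref{thm:case1} gives the same conclusion.

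For step (ii), the definitions of $h_{10}$ and $f_{0}$ give the complex scalar $(h_{10})_{j}=s_{j}^{T}b_{1}+d_{11}f_{j}$, so $|(h_{10})_{j}/f_{j}|=|s_{j}^{T}b_{1}/f_{j}+d_{11}|$, and analogously $|(h_{20})_{j}/g_{j}|=|t_{j}^{T}c_{1}/g_{j}+d_{11}|$; this establishes the first expression claimed in the theorem. To convert to the $|G_{zw}(\cdot)|$ form under the additional hypothesis $\lambda_{j}\notin\lambda(A)$, I would rearrange the left-null-vector identity $s_{j}^{T}(A-\lambda_{j}I)+f_{j}c_{1}^{T}=0$ as $s_{j}^{T}=f_{j}c_{1}^{T}(\lambda_{j}I-A)^{-1}$, which yields
\[
\frac{s_{j}^{T}b_{1}}{f_{j}}+d_{11}\;=\;c_{1}^{T}(\lambda_{j}I-A)^{-1}b_{1}+d_{11}\;=\;G_{zw}(\lambda_{j}).
\]
The symmetric argument applied to the right-null-vector relation for the dual system $(A^{T},c_{2},b_{1}^{T},d_{21})$ (transposing to get $(A-\omega_{j}I)t_{j}=-b_{1}g_{j}$) similarly yields $t_{j}^{T}c_{1}/g_{j}+d_{11}=G_{zw}(\omega_{j})$.

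The main obstacle I anticipate is step (i): although the substitution is visually obvious, one has to verify that Theorem \ref{thm:case2} (or \ref{thm:case1}) genuinely applies after the imaginary-axis zeros have been stripped off. Concretely, the matrices $F_{+}$ and $G_{+}$ in \eqref{gammacase2} must remain positive definite, which requires controllability of $(\Lambda_{+},f_{+})$ and $(\Omega_{+},g_{+})$; this in turn rests on Assumption \ref{A}(a) via Lemma \ref{lem:cntobs}, but one should check that this structural property is preserved under the partitioning induced by the Weierstrass decomposition \eqref{zeroOnimg}. A minor secondary issue is the degenerate sub-case in which $G_{zu}$ or $G_{yw}$ has \emph{only} imaginary-axis invariant zeros, so that the corresponding block of \eqref{LMI_img7} becomes vacuous; this is handled by interpreting $\hat{\gamma}$ as $0$ in that block and checking that the max formula still delivers the correct value.
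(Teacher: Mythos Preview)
Your proposal is correct and follows essentially the same route as the paper: the paper's proof likewise invokes the already-established chain of equivalences down to \eqref{LMI_img6}, then identifies $(h_{10})_j = s_j^T b_1 + d_{11}f_j$ and $(h_{20})_j = t_j^T c_1 + d_{11}g_j$ to obtain the first display, and finally uses the null-vector identity $s_j^T = f_j c_1^T(\lambda_j I_n - A)^{-1}$ (and its dual analogue) to convert to $|G_{zw}(\cdot)|$ under the extra hypothesis. The identification $\tilde\gamma = \hat\gamma$ via Theorem~\ref{thm:case2} (or~\ref{thm:case1}) is handled in the paper exactly as you outline, in the paragraph immediately preceding the theorem statement; your anticipated obstacle about controllability of $(\Lambda_+,f_+)$ is not an issue, since Lemma~\ref{lem:cntobs} derives it directly from the stabilizability of $(A,b_2)$ irrespective of the imaginary-axis partition.
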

\begin{proof}
We have already mentioned that all the optimal values of \eqref{LMI_img2}, \eqref{dual_img2}, \eqref{dual_img3}, \eqref{LMI_img3}, \eqref{LMI_img5} and \eqref{LMI_img6} are equal to $\gamma^*$. 

We rewrite the last three constraints in \eqref{LMI_img6}. We have $h_{10} =s_j^Tb_1 + d_{11}f_j$ and $h_{20} = t_j^Tc_1 + d_{11}g_j$ for all $j$. 
From these equations, we obtain 
\[
\gamma \ge\left|\frac{s_j^Tb_1}{f_j} + d_{11}\right| \mbox{ and } \gamma\ge \left|\frac{t_j^Tc_1}{g_j} + d_{11}\right|
\] 
for all $j$. Therefore, we obtain the desired result. 

We prove the next statement. 
If $\lambda_j$ is not an eigenvalue of $A$, 
then we have $s_j^T = f_j c_1^T(\lambda_j I_n - A)^{-1}$. 
Similarly, $\omega_j$ is not an eigenvalue of $A$, 
then we have $t_j^T = g_j b_1^T(\omega_j I_n - A^T)^{-1}$. 
By using these expressions together with \eqref{zeroOnimg}, we obtain
\begin{align*}
\displaystyle\left|\frac{s_j^Tb_1}{f_j} + d_{11}\right| 
&= \left|G_{zw}(\lambda_j)\right| \mbox{ and } 
 \displaystyle\left|\frac{t_j^Tc_1}{g_j} + d_{11}\right| 
 = \left|G_{zw}(\omega_j)\right|. 
\end{align*}
Therefore we obtain the desired result. 
\end{proof}

\section{Analysis of Case 4}\label{sec:case4}
Finally, we deal with the case where at least one of the conditions 
$d_{12}=0$ and $d_{21}=0$ holds. 
When $d_{12}=0$, for instance, the transfer function $G_{zu}$ may
be identically zero. Then  the transfer function $\Gcl(s, K)$ of the closed-loop system is $\Gcl(s, K) \equiv G_{zw}(s)$, and thus $\gamma^* = \|G_{zw}\|_{\infty}$. This also holds when $G_{yw}$ is identically zero. Therefore we consider the case where both transfer functions $G_{zu}$ and $G_{yw}$ are not identically zero.  

When $d_{12}=0$, 
the relative degree $r_1$ of $G_{zu}$ is  positive and 
it follows from the assertion \ref{a4} in Theorem \ref{Weierstrass} and \ref{b2} of Remark \ref{remark1} that we have 
\begin{align*}
&\left\{
\begin{array}{cl}
d_{12} = 0, c_1^Tb_2 \neq 0 & \mbox{ if } r_1 =1, \\
 d_{12} = 0, c_1^TA^{r}b_2 = 0 \ (r=0, \ldots, r_1-2) \mbox{ and }  c_1^TA^{r_1-1}b_2 \neq 0 & \mbox{ if } r_1 >1. 
\end{array}
\right.
\end{align*}
Similarly, when $d_{21} = 0$, 
cally zero, 
then the relative degree $r_2$ of $G_{yw}$ is positive and we have 
\begin{align*}
&\left\{
\begin{array}{cl}
d_{21}  = 0, c_2^Tb_1 \neq 0 & \mbox{ if } r_2 =1, \\
 d_{21} = 0, c_2^TA^{r}b_1 = 0 \ (r=0, \ldots, r_2-2) \mbox{ and }  c_2^TA^{r_2-1}b_1 \neq 0& \mbox{ if } r_2 >1. 
\end{array}
\right. 
\end{align*}
Combining them with $S^TA + fc^T = \Lambda^TS^T$, $T^TA^T + gb^T = \Omega^TT^T$, $S^Tb_2 = 0$ and $T^Tc_2=0$, we obtain  
\begin{align}
\label{StAb}
S^TA^{r}b_2 &=0 \ (r=0, \ldots, r_1-1), \\
\label{TtAtc}
T^T(A^T)^{r}c_2 &= 0 \ (r=0, \ldots, r_2-1). 
\end{align}

We define $P_1\in\mathbb{R}^{n\times r_1}$, $p_1\in\mathbb{R}^{r_1}$, $P_2\in\mathbb{R}^{n\times r_2}$ and $p_2\in\mathbb{R}^{r_2}$ as follows:
\begin{align*} 
\begin{pmatrix}
P_1\\
p_1^T
\end{pmatrix} &=\left\{
\begin{array}{cl}
 \begin{pmatrix}
0 \\
1
\end{pmatrix} & \mbox{ if } r_1 = 1, \\
 \begin{pmatrix}
0 & c_1 & (A^T)c_1 & \ldots & (A^T)^{r_1-2}c_1\\
1 & 0 & 0 & \ldots & 0
\end{pmatrix} & \mbox{ if } r_1 > 1, 
\end{array}
\right.\\
\begin{pmatrix}
P_2\\
p_2^T
\end{pmatrix} &=\left\{
\begin{array}{cl}
 \begin{pmatrix}
0\\
1
\end{pmatrix} & \mbox{ if } r_2 = 1, \\
 \begin{pmatrix}
0 & b_1 & Ab_1 & \ldots & A^{r_2-2}b_1\\
1 & 0 & 0 & \ldots & 0
\end{pmatrix}& \mbox{ if } r_2 > 1. 
\end{array}
\right. 
\end{align*}
Using \eqref{StAb} and \eqref{TtAtc}, we have 
\begin{align}
\label{bAN1}
(b_2^T(A^{r})^TP_1)_{j} &=0 \ (r=0, 1, \ldots, r_1-1, j=1, \ldots, r_1 - r), \\
\label{cAtN2}
(c_2^T(A^{r})P_2)_{j} &=0 \ (r=0, 1, \ldots, r_2-1,  j=1, \ldots, r_2 - r). 
\end{align}

We provide  the perpendicular matrices of the vectors $(b_2^T, 0, 0)^T$ and $(c_2^T, 0, 0)^T$ in Lemma \ref{SfNp}. This lemma can be proved in a similar manner to the proof of Lemma \ref{lem:infinitevec}.  
\begin{lemma}\label{SfNp}
Suppose $d_{12}=0$ in $G_{zu}$ and its relative degree is $r_1$. Then we have 
\[
\begin{pmatrix}
b_2\\
0\\
0
\end{pmatrix}^\perp = \begin{pmatrix}
S & P_1 & 0\\
f^T & p_1^T & 0\\
0 & 0 & 1
\end{pmatrix}, 
\]
i.e., $S^Tb_2 = 0$, $P_1^Tb_2 = 0$ and the following square matrix is nonsingular: 
\[
\begin{pmatrix}
b_2 & S & P_1 & 0\\
0 & f^T & p_1^T & 0\\
0& 0 & 0 & 1
\end{pmatrix}. 
\]
Similarly, suppose $d_{21}=0$ in $G_{yw}$ and its relative degree is $r_2$. Then we have 
\[
\begin{pmatrix}
c_2\\
0\\
0
\end{pmatrix}^\perp = \begin{pmatrix}
T & P_2 & 0\\
g^T & p_2^T & 0\\
0 & 0 & 1
\end{pmatrix}. 
\]
\end{lemma}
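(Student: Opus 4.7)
My plan is to prove the statement for $\left(\begin{smallmatrix} b_2 \\ 0 \\ 0 \end{smallmatrix}\right)^\perp$; the companion claim for $\left(\begin{smallmatrix} c_2 \\ 0 \\ 0 \end{smallmatrix}\right)^\perp$ follows by the same argument applied to the dual realization of $G_{yw}$. Two things must be verified: (i) every column of the claimed matrix is orthogonal to $(b_2^T, 0, 0)$, and (ii) the augmented $(n+2) \times (n+2)$ matrix displayed in the lemma is nonsingular. Property (ii) ensures both that the columns of the perpendicular matrix are linearly independent (so $(G^\perp)^T G^\perp$ is positive definite) and that together with $b_2$ they span the entire ambient space, which is exactly the defining property of a perpendicular matrix.

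For (i), I would read $S^T b_2 = 0$ off the second block-column of the transposed Weierstrass identity \eqref{zeroGzu}: that identity gives $S^T b_2 + f d_{12} = 0$, which collapses under $d_{12} = 0$. For $P_1^T b_2 = 0$, I would use the explicit column structure of $P_1$: its first column is the zero vector, and for $j \ge 2$ the $j$-th entry of $P_1^T b_2$ equals the Markov parameter $c_1^T A^{j-2} b_2$, which vanishes for all $j = 2, \ldots, r_1$ by assertion \ref{a4} of Theorem \ref{Weierstrass}. This is precisely the $r = 0$ case of \eqref{bAN1}.

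For (ii), the last column $(0, 0, 1)^T$ is clearly independent of the remaining columns (all of which have $0$ as their last entry), so nonsingularity of the full matrix reduces to nonsingularity of
\[
\tilde{M} := \begin{pmatrix} b_2 & S & P_1 \\ 0 & f^T & p_1^T \end{pmatrix} \in \mathbb{R}^{(n+1)\times(n+1)}.
\]
If $\tilde{M}(\alpha, u^T, v^T)^T = 0$, then left-multiplying the top block of rows by $b_2^T$ and invoking the orthogonality relations just established gives $\alpha \|b_2\|^2 = 0$, whence $\alpha = 0$ since $b_2 \neq 0$ (if $b_2 = 0$ and $d_{12} = 0$ then $G_{zu} \equiv 0$, contradicting the standing assumption of the section). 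The remaining equation $\left(\begin{smallmatrix} S & P_1 \\ f^T & p_1^T \end{smallmatrix}\right)(u^T, v^T)^T = 0$ is then handled by observing that this matrix consists of the first $n$ columns of
\[
Q = \begin{pmatrix} S & P_1 & (A^T)^{r_1 - 1} c_1 \\ f^T & p_1^T & 0 \end{pmatrix},
\]
which is the $Q$ of the transposed Weierstrass decomposition produced by Lemma \ref{lem:infinitevec} applied to the dual realization $(A^T, c_1, b_2^T, d_{12})$ of $G_{zu}$; since $Q$ is nonsingular, dropping one column leaves a matrix of full column rank, forcing $u = 0$ and $v = 0$.

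I do not expect a substantive technical obstacle. The only step requiring care is the bookkeeping needed to match the partitioning of the infinite-zero block $\hat{S}$ in Lemma \ref{lem:infinitevec} against the definition of $P_1$ here, so that $P_1$ appears as the first $r_1$ columns of $\hat{S}$ while the extra column $(A^T)^{r_1 - 1} c_1$ is isolated and can be discarded without losing linear independence.
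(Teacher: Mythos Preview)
Your proof is correct and aligns with the paper's approach: the paper simply remarks that Lemma \ref{SfNp} ``can be proved in a similar manner to the proof of Lemma \ref{lem:infinitevec}'' without spelling out details, and your argument indeed rests on the same ingredients (the relative-degree identities for the Markov parameters and the nonsingularity of the Weierstrass matrix $Q$). The one organizational difference is that rather than re-running the iterative peeling argument of \ref{subapp:lemma2}, you invoke the conclusion of Lemma \ref{lem:infinitevec} as a black box---recognizing $\left(\begin{smallmatrix} S & P_1 \\ f^T & p_1^T \end{smallmatrix}\right)$ as the first $n$ columns of the already-known nonsingular $Q$---after first eliminating the $b_2$-coefficient via orthogonality; this is a clean and slightly more modular packaging of the same idea.
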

%

From Lemma \ref{SfNp}, \eqref{LMI} is equivalent to the following optimization problem: 
\begin{align}\label{LMI_sing2}
&\left\{
\begin{array}{cl}
\displaystyle\inf_{\gamma, X, Y} & \gamma\\
\mbox{subject to} &  - {\small \begin{pmatrix}
\He(\Lambda^T S^TXS) -\gamma ff^T &*& *\\
\hat{P}_1^TXS +P_1^TXS\Lambda-\gamma p_1f^T&\He(\hat{P}_1^TXP_1)-\gamma p_1p_1^T& *\\
h_1^T & \tilde{h}_1^T&-\gamma
\end{pmatrix}} \in\mathbb{S}_+^{n+1}, \\
& - {\small \begin{pmatrix}
\He(\Omega^T T^TYT) -\gamma gg^T &*&* \\
\hat{P}_2^TYT +P_2^TYT\Omega-\gamma p_2g^T&\He(\hat{P}_2^TYP_2)-\gamma p_2p_2^T&* \\
h_2^T & \tilde{h}_2^T&-\gamma
\end{pmatrix}} \in\mathbb{S}_+^{n+1}, \\
& \begin{pmatrix}
X & -I_n\\
-I_n & Y
\end{pmatrix}\in\mathbb{S}^{2n}_+. 
\end{array}
\right.
\end{align}
where 
\begin{align*}
h_1 &= S^Tb_1+ d_{11}f, h_2 = T^Tc_1 + d_{11}g, \\
\tilde{h}_1 &= P_1^Tb_1 + d_{11}p_1 = \begin{pmatrix}
d_{11} & c_1^Tb_1 & c_1^TAb_1 & \ldots & c_1^TA^{r_1-2}b_1
\end{pmatrix}^T\\
 \tilde{h}_2 &= P_2^Tc_1 + d_{11}p_2 = \begin{pmatrix}
d_{11} & b_1^Tc_1 & b_1^TA^Tc_1 & \ldots & b_1^T(A^T)^{r_2-2}c_1
\end{pmatrix}^T \\
\hat{P}_1 &= A^TP_1+c_1p_1^T = \begin{pmatrix}
c_1 & A^Tc_1 & \ldots & (A^T)^{r_1-1}c_1 
\end{pmatrix}, \\
\hat{P}_2 &= AP_2+b_1p_2^T = \begin{pmatrix}
b_1 & Ab_1 & \ldots & A^{r_2-1}b_1 
\end{pmatrix}. 
\end{align*}
Similar properties to \eqref{bAN1} and \eqref{cAtN2} hold in $\hat{P}_1$ and $\hat{P}_2$ as follows:
\begin{align}
\label{bAhN1}
(b_2^T(A^{r})^T\hat{P}_1)_{j} &=0 \ (r=0, \ldots, r_1-2, j=1, \ldots, r_1 - r-1), \\
\label{cAthN2}
(c_2^T(A^{r})\hat{P}_2)_{j} &=0 \ (r=0, \ldots, r_2-2,  j=1, \ldots, r_2 - r-1). 
\end{align}
The following lemma provides the dual of \eqref{LMI_sing2}. We give a proof in \ref{subapp:lemma14}. 

\begin{lemma}\label{lem:dual_sing}
The dual of \eqref{LMI_sing2} can be formulated as follows:
\begin{align}\label{dual_sing2}
&\left\{
\begin{array}{cl}
\displaystyle\sup & \begin{pmatrix}
& &h_1\\
& &\tilde{h}_1\\
h_1^T & \tilde{h}_1^T&  
\end{pmatrix}\bullet Z+ \begin{pmatrix}
& &h_2\\
& &\tilde{h}_2\\
h_2^T & \tilde{h}_2^T&  
\end{pmatrix}\bullet V + \begin{pmatrix}
& I_n\\
I_n & 
\end{pmatrix}\bullet W\\
\mbox{subject to} &  \begin{pmatrix}
ff^T & fp_1^T & \\
p_1f^T & p_1p_1^T & \\
& & 1
\end{pmatrix}\bullet Z + \begin{pmatrix}
gg^T & gp_2^T & \\
p_2g^T & p_2p_2^T & \\
& & 1
\end{pmatrix}\bullet V = 1, \\ 
& W_{11} = \He(S\Lambda(Z_{11}S^T+ Z_{21}^TP_1^T) + \hat{P}_1(Z_{21}S^T +Z_{22}P_1^T)), \\
& W_{22} = \He(T\Omega(V_{11}T^T+ V_{21}^TP_2^T) + \hat{P}_2(V_{21}T^T + V_{22}P_2^T)), \\
&Z =  \begin{pmatrix}
Z_{11} & Z_{21}^T & Z_{31}^T\\
Z_{21} & Z_{22} & Z_{32}^T\\
Z_{31} & Z_{32} & Z_{33}
\end{pmatrix}\in\mathbb{S}^{n+1}_+, V = \begin{pmatrix}
V_{11} & V_{21}^T & V_{31}^T\\
V_{21} & V_{22} & V_{32}^T\\
V_{31} & V_{32} & V_{33}
\end{pmatrix}\in\mathbb{S}^{n+1}_+, \\
& Z_{21}\in\mathbb{R}^{(n-r_1)\times r_1}, Z_{22}\in\mathbb{R}^{r_1\times r_1},  V_{21}\in\mathbb{R}^{(n-r_2)\times r_2}, V_{22}\in\mathbb{R}^{r_2\times r_2}, \\ 
& \begin{pmatrix}
W_{11} & W_{21}^T\\
W_{21} & W_{22}
\end{pmatrix}\in\mathbb{S}^{2n}_+. 
\end{array}
\right.
\end{align}
Moreover, the duality gap between \eqref{LMI_sing2} and \eqref{dual_sing2} is zero. 
\end{lemma}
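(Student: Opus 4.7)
The plan is to derive \eqref{dual_sing2} as the Lagrangian dual of \eqref{LMI_sing2} by a standard conic-duality computation, following the pattern of the proofs of Lemma \ref{lem:dual} and Lemma \ref{lem:dualdual}. I introduce dual multipliers $Z\in\mathbb{S}^{n+1}_+$ and $V\in\mathbb{S}^{n+1}_+$ for the two $(n+1)$-sized LMIs and $\tilde{W}=\left(\begin{smallmatrix}W_{11}&W_{21}^T\\W_{21}&W_{22}\end{smallmatrix}\right)\in\mathbb{S}^{2n}_+$ for the third, partitioning $Z$ and $V$ conformably with the block structure of the corresponding primal constraint matrices. Denoting the three primal constraint matrices by $M_1,M_2,M_3$, the Lagrangian is
\[
L(\gamma,X,Y;Z,V,\tilde{W})=\gamma - Z\bullet M_1 - V\bullet M_2 - \tilde{W}\bullet M_3.
\]

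I then take the infimum of $L$ over $\gamma$, $X$, $Y$ and read the dual off the resulting stationarity conditions. Setting the coefficient of $\gamma$ to zero collects the $\gamma$-blocks $ff^T$, $p_1f^T$, $p_1p_1^T$ and the $(3,3)$ scalar of $M_1$ (together with their $M_2$ analogues) against $Z$ and $V$, producing exactly the scalar normalization of \eqref{dual_sing2}. For the $X$-coefficient I expand the four $X$-linear pieces of $M_1$, namely $\He(\Lambda^T S^TXS)$, $\hat{P}_1^T X S$, $P_1^T X S\Lambda$, and $\He(\hat{P}_1^T X P_1)$, rewrite each against the matching block of $Z$ via trace cyclicity, and add the $-W_{11}\bullet X$ contribution from $M_3$. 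Because $X$ ranges over symmetric matrices, the symmetric part of the collected coefficient must vanish, which after re-packaging yields exactly
\[
W_{11}=\He\bigl(S\Lambda(Z_{11}S^T+Z_{21}^T P_1^T)+\hat{P}_1(Z_{21}S^T+Z_{22}P_1^T)\bigr).
\]
The $Y$-coefficient is handled identically and produces the stated formula for $W_{22}$. The terms of $L$ that do not involve $\gamma,X,Y$ assemble into the dual objective: the $(1,3),(2,3),(3,3)$ blocks of $M_1$ and $M_2$ contribute the $h_i,\tilde{h}_i$ inner products against the third rows of $Z,V$, while the off-diagonal identity in $M_3$ contributes $\left(\begin{smallmatrix}O&I_n\\I_n&O\end{smallmatrix}\right)\bullet\tilde{W}$.

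For the zero-duality-gap claim I invoke Theorem \ref{thm:duality}: under Assumption \ref{A}-(a), problem \eqref{LMI} is strictly feasible, and since \eqref{LMI_sing2} is obtained from \eqref{LMI} only by substituting the non-singular perpendicular matrices of Lemma \ref{SfNp} (a congruence transformation of the LMI data), \eqref{LMI_sing2} inherits strict feasibility. Slater's condition on the primal therefore gives zero duality gap with \eqref{dual_sing2} through the SDP strong duality theorem (Theorem \ref{sdt} in \ref{app:sdp}).

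The main technical obstacle will be the $X$-derivative computation: tracking transpositions across the off-diagonal dual block $Z_{21}$ and symmetrically recombining the six trace terms produced by the four $X$-linear pieces into the compact single $\He(\cdot)$ expression above is care-intensive but entirely mechanical. Once that step is carried out, the analogous $Y$-derivative and the zero-gap argument are immediate, so the bulk of the work concentrates on the $W_{11}$ (and symmetrically $W_{22}$) identification.
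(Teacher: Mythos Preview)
Your proposal is correct and follows essentially the same route as the paper. The paper's proof (in \ref{subapp:lemma14}) sets up the Lagrangian $L(\gamma,X,Y,Z,V,W)=\gamma+Z\bullet(\cdot)+V\bullet(\cdot)-W\bullet(\cdot)$, expands it, and defers the rest to ``a similar discussion in Lemma~\ref{lem:dual}''; for zero duality gap it likewise invokes Theorem~\ref{thm:duality}, noting that the strict-feasibility proof there is independent of the particular perpendicular matrices chosen---exactly the congruence argument you spell out.
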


From the following lemma, we can reduce the size of \eqref{dual_sing2} equivalently. We give a proof in \ref{subapp:lemma15}.
\begin{lemma}\label{zeroDual}
Let $(Z_{ij}, V_{ij}, W_{ij})$ be a feasible solution of \eqref{dual_sing2}. Then 
$Z_{21} =O_{r_1\times(n-r_1)}$ and  $V_{21} =O_{r_2\times(n-r_2)}$. In addition, if $r_1 > 1$, we have $(Z_{22})_{kr} = 0$ for all $(k, r)\neq (1, 1)$ and $(Z_{32})_{k} = 0 \ (k=2, \ldots, r_1)$. Similarly, if $r_2 > 1$, we have $(V_{22})_{kr} = 0$  for all $(k, r)\neq (1, 1)$ and $(V_{32})_{k} = 0 \ (k=2, \ldots, r_2)$. 
\end{lemma}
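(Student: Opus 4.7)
Let $B := (b_2,\, Ab_2,\, \ldots,\, A^{r_1-1} b_2) \in \mathbb{R}^{n \times r_1}$ be the Krylov matrix of $(A,b_2)$ of order $r_1$, so that $S^T B = 0$ by \eqref{StAb}. Set $\alpha := c_1^T A^{r_1-1} b_2$, which is nonzero. Because $c_1^T A^j b_2 = 0$ for $j \le r_1 - 2$, the matrix $M_1 := \hat{P}_1^T B$ is the Hankel matrix with $(M_1)_{jk} = c_1^T A^{j+k-2} b_2$; it vanishes for $j+k \le r_1$, equals $\alpha$ on the anti-diagonal $j+k = r_1+1$, and is therefore nonsingular (reversing its columns produces a lower-triangular matrix with diagonal $\alpha$). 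The companion matrix $M_2 := P_1^T B$ satisfies $M_2 = N^T M_1$, where $N := \sum_{p=1}^{r_1-1} e_p e_{p+1}^T$ is the nilpotent super-diagonal shift on $\mathbb{R}^{r_1}$.

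Because $S^T B = 0$, only the contribution stemming from $\hat{P}_1 Z_{22} P_1^T$ survives in $B^T W_{11} B$, and using $M_1^T = M_1$ (Hankel) together with $Z_{22}^T = Z_{22}$ one obtains
\begin{equation*}
B^T W_{11} B \;=\; \He(M_1^T Z_{22} M_2) \;=\; M_1\, \He(N Z_{22})\, M_1.
\end{equation*}
Since $M_1$ is symmetric and invertible and $W_{11} \succeq 0$, this forces $\He(N Z_{22}) \succeq 0$. Combined with $Z_{22} \succeq 0$ (a principal submatrix of the PSD matrix $Z$), a downward induction on $k = r_1,\, r_1-1,\, \ldots,\, 2$ shows that the $k$-th row and column of $Z_{22}$ vanish. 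Indeed, assuming the rows and columns $k+1,\ldots,r_1$ of $Z_{22}$ are already zero, the diagonal entry $[\He(N Z_{22})]_{kk} = 2(Z_{22})_{k+1,k}$ vanishes by the inductive hypothesis; the zero-diagonal property of the PSD matrix $\He(N Z_{22})$ then yields $\He(N Z_{22}) e_k = 0$, i.e. $N Z_{22} e_k = 0$, placing $Z_{22} e_k$ in $\ker N = \{\beta e_1 : \beta \in \mathbb{R}\}$. In particular $(Z_{22})_{kk} = 0$, and $Z_{22} \succeq 0$ then completes the $k$-th row and column to zero. Therefore $Z_{22} = z_{11} e_1 e_1^T$ for some $z_{11} \ge 0$, and applying the same zero-diagonal principle to $Z \succeq 0$ gives $(Z_{21})_{k,:} = 0$ and $(Z_{32})_k = 0$ for $k = 2, \ldots, r_1$.

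It remains to show that the first row $w^T$ of $Z_{21}$ is also zero. The structure just obtained forces $Z_{22} P_1^T = 0$ and $Z_{21}^T P_1^T = 0$ (both because $P_1 e_1 = 0$), so the dual constraint collapses to
\begin{equation*}
W_{11} \;=\; \He(S \Lambda Z_{11} S^T) + c_1 (S w)^T + (S w) c_1^T.
\end{equation*}
Taking $\xi := A^{r_1-1} b_2$ and using $S^T \xi = 0$ from \eqref{StAb} together with $c_1^T \xi = \alpha$, one computes $W_{11} \xi = \alpha S w$ and $\xi^T W_{11} \xi = \alpha w^T S^T \xi = 0$. Hence $W_{11} \succeq 0$ forces $W_{11} \xi = 0$ and thus $S w = 0$; the full column rank of $S$ (item \ref{F9} of Lemma \ref{lem:gplant}) gives $w = 0$, so $Z_{21} = O_{r_1 \times (n-r_1)}$. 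The analogous conclusions for $V_{21}, V_{22}, V_{32}$ follow by the identical argument applied with the Krylov vectors $c_2,\, A^T c_2,\, \ldots,\, (A^T)^{r_2-1} c_2$ and with $T, g, P_2, \hat{P}_2$ replacing $S, f, P_1, \hat{P}_1$.

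The main obstacle is recognizing the reduction $B^T W_{11} B = M_1 \He(N Z_{22}) M_1$ and verifying that $M_1$ is nonsingular despite the upper-triangular-zero pattern forced by the relative degree; once this identity is in place, every remaining structural statement follows from the elementary zero-diagonal principle for PSD matrices combined with controllability of $(A, b_2)$.
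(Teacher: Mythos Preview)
Your proof is correct. It relies on the same underlying ingredients as the paper's argument---the Krylov vectors $A^{r}b_2$, the relative-degree identities \eqref{StAb}--\eqref{bAhN1}, and the zero-diagonal principle for PSD matrices---but organizes them differently. The paper proceeds one vector at a time: for each $r=0,1,\ldots,r_1-1$ it shows $(A^{r}b_2)^T W_{11}(A^{r}b_2)=0$, invokes Fact~\ref{wfact} to obtain $W_{11}(A^{r}b_2)=0$, and from that extracts the vanishing of a single row/column of $Z_{22}$ and $Z_{21}$ before moving to the next $r$. You instead form the entire Krylov block $B=(b_2,\ldots,A^{r_1-1}b_2)$ at once and derive the identity $B^T W_{11} B = M_1\,\He(NZ_{22})\,M_1$, where the Hankel matrix $M_1=\hat P_1^T B$ is automatically symmetric and invertible by the relative-degree pattern. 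This single congruence collapses the paper's outer induction into one line: invertibility of $M_1$ transfers positive semidefiniteness directly to $\He(NZ_{22})$, and your downward induction on $Z_{22}$ then proceeds entirely at the $r_1\times r_1$ level. The final step (killing the first row $w^T$ of $Z_{21}$ via $\xi=A^{r_1-1}b_2$) is essentially identical to the paper's endgame. One small gloss worth making explicit: in passing from $\He(NZ_{22})e_k=0$ to $NZ_{22}e_k=0$ you are tacitly using $Z_{22}N^Te_k=Z_{22}e_{k+1}=0$ from the inductive hypothesis (and $N^Te_{r_1}=0$ in the base case); this is correct but not quite as immediate as the phrase ``i.e.'' suggests.
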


\begin{remark}\label{remark3}
As well as Remark \ref{facialred} and \ref{remark2}, we have reduced the size of the matrix variables $Z$ and $V$ in \eqref{dual_sing2} in Lemma \ref{zeroDual}. This reduction also corresponds to the facial reduction for SDP as well as Lemmas \ref{lem:dstructure} and \ref{reduction}.   
\end{remark}

By applying Lemma \ref{zeroDual} to \eqref{dual_sing2} and substituting zeros in $(Z, V, W)$, we can reformulate it as follows. 
\begin{align}\label{dual_sing3}
&\left\{
\begin{array}{cl}
\displaystyle\sup & \begin{pmatrix}
& &h_1\\
& &d_{11}\\
h_1^T & d_{11}&  
\end{pmatrix}\bullet Z+ \begin{pmatrix}
& &h_2\\
& &d_{11}\\
h_2^T & d_{11}&  
\end{pmatrix}\bullet V + \begin{pmatrix}
& I_n\\
I_n & 
\end{pmatrix}\bullet W\\
\mbox{subject to} &  \begin{pmatrix}
ff^T & f & \\
f^T & 1& \\
& & 1
\end{pmatrix}\bullet Z + \begin{pmatrix}
gg^T & g & \\
g^T & 1 & \\
& & 1
\end{pmatrix}\bullet V = 1, \\ 
& W_{11} = S\He(\Lambda Z_{11})S^T, W_{22} = T\He(\Omega V_{11})T^T,  \begin{pmatrix}
W_{11} & W_{21}^T\\
W_{21} & W_{22}
\end{pmatrix}\in\mathbb{S}^{2n}_+, \\
& Z = \begin{pmatrix}
Z_{11} & O & Z_{31}^T\\
O & Z_{22} & Z_{32}^T\\
Z_{31} & Z_{32} & Z_{33}
\end{pmatrix}\in\mathbb{S}^{n-r_1+2}_+, V = \begin{pmatrix}
V_{11} & O & V_{31}^T\\
O & V_{22} & V_{32}^T\\
V_{31} & V_{32} & V_{33}
\end{pmatrix}\in\mathbb{S}^{n-r_2+2}_+. 
\end{array}
\right.
\end{align}
It follows from constraints on $W_{ij}$ in \eqref{dual_sing3} that there exists $\hat{W}_{11}\in\mathbb{S}^{n-r_1}$, $\hat{W}_{21}\in\mathbb{R}^{(n-r_2)\times (n-r_1)}$ and $\hat{W}_{22}\in\mathbb{S}^{n-r_2}$ such that  
\[
\begin{pmatrix}
\hat{W}_{11} & \hat{W}_{21}^T\\
\hat{W}_{21} & \hat{W}_{22}
\end{pmatrix}\in\mathbb{S}^{(n-r_1)+(n-r_2)}_+, W_{11}=S\hat{W}_{11}S^T, W_{22} = T\hat{W}_{22}T^T \mbox{ and } W_{21} = T\hat{W}_{21}S^T, 
\]
and thus \eqref{dual_sing3} can be reformulated as 
\begin{align}\label{dual_sing4}
&\left\{
\begin{array}{cl}
\displaystyle\sup &  \begin{pmatrix}
& &h_1\\
& &d_{11}\\
h_1^T & d_{11}&  
\end{pmatrix}\bullet Z+ \begin{pmatrix}
& &h_2\\
& &d_{11}\\
h_2^T & d_{11}&  
\end{pmatrix}\bullet V + \begin{pmatrix}
& S^TT\\
T^TS & 
\end{pmatrix}\bullet \hat{W}\\
\mbox{subject to} &  \begin{pmatrix}
ff^T & f & \\
f^T & 1& \\
& & 1
\end{pmatrix}\bullet Z + \begin{pmatrix}
gg^T & g & \\
g^T & 1 & \\
& & 1
\end{pmatrix}\bullet V = 1, \\ 
& \hat{W}_{11} = \He(\Lambda Z_{11}), \hat{W}_{22} = \He(\Omega V_{11}),  \hat{W}=\begin{pmatrix}
\hat{W}_{11} & \hat{W}_{21}^T\\
\hat{W}_{21} & \hat{W}_{22}
\end{pmatrix}\in\mathbb{S}^{n_\infty}_+, \\
& Z = \begin{pmatrix}
Z_{11} & O & Z_{31}^T\\
O & Z_{22} & Z_{32}^T\\
Z_{31} & Z_{32} & Z_{33}
\end{pmatrix}\in\mathbb{S}^{n-r_1+2}_+, V = \begin{pmatrix}
V_{11} & O & V_{31}^T\\
O & V_{22} & V_{32}^T\\
V_{31} & V_{32} & V_{33}
\end{pmatrix}\in\mathbb{S}^{n-r_2+2}_+. 
\end{array}
\right.
\end{align}
where $n_\infty = 2n-r_1-r_2$. 
The next lemma provides the dual of \eqref{dual_sing4}. 
\begin{lemma}\label{lem:dual_sing2}
The dual of  \eqref{dual_sing4} can be formulated as follows: 
\begin{align}\label{LMI_sing4}
&\left\{
\begin{array}{cl}
\displaystyle\inf_{\gamma, \hat{X}, \hat{Y}, \xi, \eta} & \gamma\\
\mbox{subject to} & - \begin{pmatrix}
\He(\Lambda^T \hat{X}) -\gamma ff^T &\xi - \gamma f & h_{1}\\
\xi^T-\gamma f^T& -\gamma & d_{11} \\
h_{1}^T & d_{11}& -\gamma
\end{pmatrix} \in\mathbb{S}_+^{n-r_1+2},   \\
& - \begin{pmatrix}
\He(\Omega^T \hat{Y}) -\gamma gg^T & \eta-\gamma g^T& h_{}\\
\eta^T-\gamma g^T& -\gamma & d_{11} \\
h_{2}^T & d_{11} & -\gamma
\end{pmatrix} \in\mathbb{S}_+^{n-r_2+2}, \\ 
& \begin{pmatrix}
\hat{X} & -J^T\\
-J& \hat{Y}
\end{pmatrix}\in\mathbb{S}^{n_\infty}_+, \xi\in\mathbb{R}^{n-r_1}, \eta\in\mathbb{R}^{n-r_2}. 
\end{array}
\right.
\end{align}
Moreover, the duality gap between \eqref{LMI_sing4} and \eqref{dual_sing4} is zero, and \eqref{dual_sing4} has an optimal solution. 
\end{lemma}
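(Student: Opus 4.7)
The plan is to mimic the proofs of Lemma \ref{lem:dualdual2} and Lemma \ref{lem:dualdual}, which themselves follow the standard Lagrangian-duality derivation for SDPs described in Appendix \ref{app:sdp}. First I would form the Lagrangian of \eqref{dual_sing4} by attaching a scalar multiplier $\gamma\in\mathbb{R}$ to the trace-type normalization constraint, matrix multipliers $\hat{X}\in\mathbb{R}^{(n-r_1)\times(n-r_1)}$ and $\hat{Y}\in\mathbb{R}^{(n-r_2)\times(n-r_2)}$ to the two equality constraints $\hat{W}_{11}=\He(\Lambda Z_{11})$ and $\hat{W}_{22}=\He(\Omega V_{11})$, and vector multipliers $\xi\in\mathbb{R}^{n-r_1}$ and $\eta\in\mathbb{R}^{n-r_2}$ to the imposed zero $(1,2)$ off-diagonal blocks of $Z$ and $V$, respectively. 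Taking the supremum over the primal matrix variables $Z$, $V$, and $\hat{W}$ in their respective semidefinite cones forces each conjugate to land in the dual cone: the coefficient matrix of $\hat{W}$ produces $\bigl(\begin{smallmatrix}\hat{X} & -J^T\\ -J & \hat{Y}\end{smallmatrix}\bigr)\succeq0$, while the coefficient matrices of $Z$ and $V$ yield the two $(n-r_1+2)$- and $(n-r_2+2)$-dimensional block inequalities in \eqref{LMI_sing4}, with $\xi-\gamma f$ and $\eta-\gamma g^T$ appearing exactly in the off-diagonal slots dictated by the multipliers for the zero-block constraints.

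For the zero duality gap and existence of an optimal solution of \eqref{dual_sing4}, I would apply the strong-duality theorem of Appendix \ref{app:sdp} by establishing strict feasibility of \eqref{LMI_sing4}. The route I have in mind is the following: by Theorem \ref{thm:duality}, Assumption \ref{A}-(a) yields strict feasibility of the original LMI \eqref{LMI}; push such a solution through the change of variables $\hat{X}=S^TXS$, $\hat{Y}=T^TYT$ used to pass from \eqref{LMI} to \eqref{LMI_sing2}, and then choose the auxiliary multipliers $\xi,\eta$ so as to make the two enlarged block inequalities in \eqref{LMI_sing4} strict. This construction, together with the SDP strong-duality theorem, delivers both the zero duality gap and the existence of an optimal primal solution by the same argument pattern used in Appendices \ref{subapp:proof2} and \ref{subapp:proof3}.

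The main obstacle will be the choice of the multipliers $\xi$ and $\eta$ so that the full $(n-r_1+2)$- and $(n-r_2+2)$-dimensional block inequalities become strict rather than merely feasible. In particular, after plugging in the induced $(\gamma,\hat{X},\hat{Y})$ coming from a strict solution of \eqref{LMI}, the $(1,2)$-block of each constraint reads $\xi-\gamma f$ (respectively $\eta-\gamma g^T$) so it can be set freely, but compatibility with the simultaneous negativity of the full $3\times3$ block forms requires exploiting the vanishing-moment identities \eqref{StAb}--\eqref{TtAtc} together with Schur complementation. I plan to carry out this verification, together with a routine coercivity-type argument for existence of the optimizer paralleling \ref{subapp:proof2} and \ref{subapp:proof3}, in a dedicated subsection of the appendix rather than in the main text.
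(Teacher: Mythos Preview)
Your Lagrangian derivation of \eqref{LMI_sing4} from \eqref{dual_sing4} is exactly the route the paper takes (it simply cites Lemma~\ref{lem:dual}). For the zero duality gap and existence of an optimizer, however, the paper points to Appendix~\ref{subapp:proof3}, i.e.\ it invokes the theorem of alternatives (Theorem~\ref{thm:fr}) and Lemma~\ref{lem:cntobs} to rule out a reducing certificate and thereby conclude strict feasibility of \eqref{LMI_sing4}. Your proposal instead builds a strictly feasible point of \eqref{LMI_sing4} directly from a strictly feasible $(\gamma,X,Y)$ of \eqref{LMI} supplied by Theorem~\ref{thm:duality}. Both routes are valid; yours is arguably more transparent here because the extra free variables $\xi,\eta$ make the alternative-system analysis slightly more cluttered than in Appendix~\ref{subapp:proof3}.

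That said, your ``main obstacle'' is overstated, and the vanishing-moment identities \eqref{StAb}--\eqref{TtAtc} are \emph{not} needed for this step (they were used earlier in Lemma~\ref{zeroDual}, not here). Concretely: starting from a strict solution of \eqref{LMI_sing2}, pick out the principal submatrix indexed by rows and columns $\{1,\dots,n-r_1,\ n-r_1+1,\ n+1\}$ of the first block constraint. Because $(P_1)_{\cdot,1}=0$, $(\hat P_1)_{\cdot,1}=c_1$, $(p_1)_1=1$ and $(\tilde h_1)_1=d_{11}$, this submatrix is precisely the first constraint of \eqref{LMI_sing4} with $\hat X=S^TXS$ and $\xi=S^TXc_1$; strict positivity is inherited from principal submatrices. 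The second constraint is handled symmetrically with $\hat Y=T^TYT$ and $\eta=T^TYb_1$. For the coupling constraint, $\bigl(\begin{smallmatrix}\hat X & -J^T\\ -J & \hat Y\end{smallmatrix}\bigr)=\mathrm{diag}(S^T,T^T)\bigl(\begin{smallmatrix}X & -I\\ -I & Y\end{smallmatrix}\bigr)\mathrm{diag}(S,T)$, which is positive definite since $S$ and $T$ have full column rank. No Schur complement tricks or moment identities are required.
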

\begin{proof}
We can prove by applying  similar arguments in Lemmas \ref{lem:dual} that the dual of \eqref{dual_sing4} is \eqref{LMI_sing4}. The zero duality gap  between \eqref{LMI_sing4} and \eqref{dual_sing4}, and the existence of an optimal solution of \eqref{dual_sing4} can be proved by a similar way in \ref{subapp:proof3}. 
\end{proof}

For \eqref{LMI_sing4}, we consider the following LMI problem:
\begin{align}\label{LMI_sing6}
&\left\{
\begin{array}{cl}
\displaystyle\inf_{\gamma, \hat{X}, \hat{Y}} & \gamma\\
\mbox{subject to} & - \begin{pmatrix}
\He(\Lambda^T \hat{X}) -\gamma ff^T  &h_{1}\\
h_{1}^T &  -\gamma
\end{pmatrix} \in\mathbb{S}_+^{n-r_1+1}, 
  \begin{pmatrix}
\hat{X} & -J^T\\
-J & \hat{Y}
\end{pmatrix}\in\mathbb{S}^{n_\infty}_+, \\
& - \begin{pmatrix}
\He(\Omega^T \hat{Y}) -\gamma gg^T & h_{2}\\ 
h_{2}^T &  -\gamma
\end{pmatrix} \in\mathbb{S}_+^{n-r_2+1},  \begin{pmatrix}
\gamma & -d_{11}\\
-d_{11} & \gamma
\end{pmatrix}\in\mathbb{S}^2_+. 
\end{array}
\right.
\end{align}
The following proposition holds for \eqref{LMI_sing4} and \eqref{LMI_sing6}: 

\begin{proposition}\label{equal}
The optimal value of \eqref{LMI_sing6} is equal to the optimal value of \eqref{LMI_sing4}. 
\end{proposition}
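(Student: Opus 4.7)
The plan is to establish the two inequalities $\gamma^{\mathrm{sing6}}_* \le \gamma^{\mathrm{sing4}}_*$ and $\gamma^{\mathrm{sing4}}_* \le \gamma^{\mathrm{sing6}}_*$ between the optimal values. The variables in \eqref{LMI_sing4} are $(\gamma,\hat X,\hat Y,\xi,\eta)$ while those in \eqref{LMI_sing6} are only $(\gamma,\hat X,\hat Y)$, so the second inequality is the content-bearing one and the first is a simple dimension-dropping argument.

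For the easy direction, given a feasible solution $(\gamma,\hat X,\hat Y,\xi,\eta)$ of \eqref{LMI_sing4}, I would show that $(\gamma,\hat X,\hat Y)$ is feasible for \eqref{LMI_sing6}. The first and second LMI constraints of \eqref{LMI_sing6} appear as the principal submatrices of the $3\times 3$ block constraints of \eqref{LMI_sing4} obtained by deleting the middle block row/column. The scalar LMI $\begin{pmatrix}\gamma & -d_{11}\\ -d_{11} & \gamma\end{pmatrix}\succeq0$ is exactly the $2\times 2$ trailing principal submatrix of either of the first two constraints of \eqref{LMI_sing4}. The third constraint (with $J$) is identical in both problems and the objective $\gamma$ is preserved, yielding $\gamma^{\mathrm{sing6}}_*\le\gamma^{\mathrm{sing4}}_*$.

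For the reverse direction, given $(\gamma,\hat X,\hat Y)$ feasible for \eqref{LMI_sing6}, I would invoke the matrix completion Lemma \ref{mcomp} twice to fabricate $\xi\in\mathbb{R}^{n-r_1}$ and $\eta\in\mathbb{R}^{n-r_2}$. Concretely, to produce $\xi$, apply Lemma \ref{mcomp} with
\[
U_{11} = \gamma ff^T-\He(\Lambda^T\hat X),\quad U_{22}=\gamma,\quad U_{33}=\gamma,\quad U_{31}=-h_1^T,\quad U_{32}=-d_{11}.
\]
The hypotheses of the lemma are precisely the first constraint of \eqref{LMI_sing6} (providing $\begin{pmatrix}U_{11} & U_{31}^T\\ U_{31}&U_{33}\end{pmatrix}\succeq0$) and the fourth constraint of \eqref{LMI_sing6} (providing $\begin{pmatrix}U_{22} & U_{32}^T\\ U_{32}&U_{33}\end{pmatrix}\succeq0$). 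The lemma then supplies an entry $U_{21}$ that makes the full $3\times 3$ block matrix positive semidefinite; setting $\xi=\gamma f-U_{21}^T$ yields exactly the first LMI of \eqref{LMI_sing4}. An entirely parallel application with $\Omega$, $g$, $h_2$ in place of $\Lambda$, $f$, $h_1$ produces $\eta$ satisfying the second LMI of \eqref{LMI_sing4}. The coupling constraint on $(\hat X,\hat Y,J)$ is unchanged, so the enlarged tuple $(\gamma,\hat X,\hat Y,\xi,\eta)$ is feasible for \eqref{LMI_sing4} with the same objective value, giving $\gamma^{\mathrm{sing4}}_*\le\gamma^{\mathrm{sing6}}_*$.

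The only nontrivial step is verifying that the hypotheses of Lemma \ref{mcomp} align cleanly with the constraints of \eqref{LMI_sing6}; the rest is block-matrix bookkeeping. In particular, it is essential that the scalar $-\gamma$ on the bottom-right of the first constraint of \eqref{LMI_sing6} is \emph{the same} scalar as the $-\gamma$ that reappears at the bottom-right in the $3\times 3$ structure of \eqref{LMI_sing4} and in the separate $2\times 2$ constraint of \eqref{LMI_sing6}. This shared pivot is exactly the overlapping $U_{33}$ block that Lemma \ref{mcomp} requires, which explains why the isolated constraint $\begin{pmatrix}\gamma & -d_{11}\\ -d_{11}&\gamma\end{pmatrix}\succeq0$ was introduced in \eqref{LMI_sing6}.
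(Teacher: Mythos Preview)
Your proposal is correct and follows essentially the same approach as the paper's proof: the easy direction via principal submatrices, and the hard direction via the matrix completion Lemma~\ref{mcomp} applied to the overlapping $\gamma$ pivot. The only cosmetic difference is that the paper writes down the explicit completion $\xi=\gamma f+\tfrac{d_{11}}{\gamma}h_1$ (and similarly for $\eta$) when $\gamma>0$ and treats $\gamma=0$ separately, whereas you invoke Lemma~\ref{mcomp} abstractly; since that lemma is stated for positive \emph{semi}definite inputs, your uniform treatment is arguably cleaner.
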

\begin{proof}
Let $\hat{\gamma}$ and $\tilde{\gamma}$ be the optimal values of \eqref{LMI_sing4} and \eqref{LMI_sing6}, respectively. 
Since any feasible solution of \eqref{LMI_sing4} is also feasible for \eqref{LMI_sing6} with the same objective value, we have $\hat{\gamma}\ge \tilde{\gamma}$. For this, it is sufficient to prove $\hat{\gamma}\le \tilde{\gamma}$. 

Let $(\gamma, \hat{X}, \hat{Y})$ be a feasible solution for \eqref{LMI_sing6}. 
If $\gamma=0$, then $d_{11} = 0$, $h_{1}=h_{2}=0$. In addition, as we have $\hat{X}=O$, $\hat{Y}=O$, $J=O$. This is also feasible for \eqref{LMI_sing4} by taking $\xi=\eta=0$. Hence $\gamma=0$. 

We assume that $\gamma > 0$. We define $\xi$ and $\eta$  by 
$\xi = \gamma f + \frac{d_{11}}{\gamma}h_{1}$ and $\eta = \gamma g + \frac{d_{11}}{\gamma}h_{2}$, respectively. It follows Lemma \ref{mcomp} that $(\gamma, \hat{X}, \hat{Y}, \xi, \eta)$ satisfies 
\begin{align*}
- \begin{pmatrix}
\He(\Lambda^T \hat{X}) -\gamma ff^T & *&* \\
\xi^T-\gamma f^T& -\gamma &  *\\
h_{1}^T & d_{11}& -\gamma
\end{pmatrix} & \in\mathbb{S}_+^{n-r_1+2}, \\
-  \begin{pmatrix}
\He(\Omega^T \hat{Y}) -\gamma gg^T & *&*\\
\eta^T-\gamma g^T& -\gamma & *\\
h_{2}^T & d_{11} & -\gamma
\end{pmatrix} &\in\mathbb{S}_+^{n-r_2+2}. 
\end{align*}
Hence $(\gamma, \hat{X}, \hat{Y}, \xi, \eta)$ is feasible for \eqref{LMI_sing4} with the same objective value as $\gamma$. Therefore we have $\hat{\gamma} =\tilde{\gamma}$. 
\end{proof}

It should be noted that the optimal values of optimization problems, which appear in Section \ref{sec:case4}, that is, \eqref{LMI_sing2}, \eqref{dual_sing2}, \eqref{dual_sing3}, \eqref{dual_sing4}, \eqref{LMI_sing4} and \eqref{LMI_sing6} are equal to $\gamma^*$. In fact, the optimal value of \eqref{LMI_img2} is $\gamma^*$. Since \eqref{dual_sing2} is the dual of \eqref{LMI_sing2}, it follows from Lemma \ref{lem:dual_sing} that the optimal value of \eqref{dual_sing2} is $\gamma^*$. Applying Lemma \ref{zeroDual}, we see that the optimal value of \eqref{dual_sing3} is equal to $\gamma^*$. As we have seen, the optimal value of \eqref{dual_sing4} is also equal to $\gamma^*$. It follows from Lemma \ref{lem:dual_sing2} that the optimal value of \eqref{LMI_sing4} is equal to \eqref{dual_sing4}. Finally, form Proposition \ref{equal}, we see that the optimal value of \eqref{LMI_sing6} is equal to the optimal value of \eqref{LMI_sing4}.  Hence all the optimal values of these optimization problems are equal to $\gamma^*$. 

From \eqref{LMI_sing6},  we see that 
\[
\gamma^* = \max\left\{\tilde{\gamma}, |d_{11}|\right\}, 
\]
where $\tilde{\gamma}$ is the optimal value of the following LMI problem
\begin{align}\label{LMI_sing7}
&\left\{
\begin{array}{cl}
\displaystyle\inf_{\gamma, \hat{X}, \hat{Y}} & \gamma\\
\mbox{subject to} & - \begin{pmatrix}
\He(\Lambda^T \hat{X})-\gamma ff^T  &h_{1}  \\
h_{1}^T& -\gamma
\end{pmatrix} \in\mathbb{S}_+^{n-r_1+1},  \\
& - \begin{pmatrix}
\He(\Omega^T \hat{Y})-\gamma gg^T  &h_{2}  \\
h_{2}^T& -\gamma
\end{pmatrix} \in\mathbb{S}_+^{n-r_2+1}, \\ 
&\begin{pmatrix}
\hat{X} & -J^T\\
-J & \hat{Y}
\end{pmatrix}\in\mathbb{S}^{n_\infty}_+. 
\end{array}
\right.
\end{align}
It should be noted that if $G_{zu}$ (resp. $G_{yw}$) has an invariant zero on the imaginary axis, then Theorem \ref{thm:case3} is available to obtain an expression of $\tilde{\gamma}$. Also, if $G_{zu}$ (resp. $G_{yw}$) has a stable invariant zero, then Theorem \ref{thm:case2} is available to obtain an expression of $\tilde{\gamma}$. Otherwise, Theorem \ref{thm:case1} is available. Therefore, we obtain the following theorem. 
\begin{theorem}\label{thm:case4}
Let us consider Case \ref{Case1} stated at the final part of Section \ref{sec:Hinf}. We assume that invariant zeros $\lambda_1, \ldots, \lambda_{m_1}$  in
 $G_{zu}$ and $\omega_1, \ldots, \omega_{m_2}$ in $G_{yw}$ of
 \eqref{SISO} exist on the imaginary axis. Define $\hat{\gamma}:=\lambda_{\max}(E)$ where 
$E$ is given by \eqref{gammacase2} in Theorem \ref{thm:case2}. 
Then the optimal value $\gamma^*$ of \eqref{LMI_img2} is equal to  
\[
 \max\left\{\hat{\gamma}, 
\left|\frac{s_j^Tb_1}{f_j} + d_{11}\right| \ (j=1, \ldots, m_1),  \left|\frac{t_j^Tc_1}{g_j} + d_{11}\right| \ (j=1, \ldots, m_2), |G_{zw}(\infty)|\right\}. 
\]
If $G_{zu}$ (resp. $G_{yw})$ has no invariant zeros on the imaginary axis, then $\left|\frac{s_j^Tb_1}{f_j} + d_{11}\right|$ (resp. $\left|\frac{t_j^Tc_1}{g_j} + d_{11}\right|$) is vanished from the above expression of $\gamma^*$. 
\end{theorem}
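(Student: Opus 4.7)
The plan is to combine the chain of reductions already built up in Section \ref{sec:case4} with the formula supplied by Theorem \ref{thm:case3}. By the paragraph preceding the theorem, the optimal values of \eqref{LMI_sing2}, \eqref{dual_sing2}, \eqref{dual_sing3}, \eqref{dual_sing4}, \eqref{LMI_sing4} and \eqref{LMI_sing6} all agree with $\gamma^*$, and the decoupling of the scalar block in \eqref{LMI_sing6} gives
\[
\gamma^* \;=\; \max\{\tilde\gamma,\ |d_{11}|\},
\]
where $\tilde\gamma$ is the optimal value of \eqref{LMI_sing7}. So it suffices to (i) identify $|d_{11}|$ with $|G_{zw}(\infty)|$ and (ii) produce the formula for $\tilde\gamma$.

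For (i), since at least one of $d_{12}, d_{21}$ vanishes, the feedthrough term of $G_{zw}$ at infinity is exactly $d_{11}$, as recorded in Remark \ref{remark:main}\ref{R3}; thus the second entry inside the maximum is precisely the $|G_{zw}(\infty)|$ appearing in the statement.

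For (ii), I would apply Theorem \ref{thm:case3} directly to \eqref{LMI_sing7}. Note that \eqref{LMI_sing7} has the same block structure as \eqref{LMI_img7}: the sizes $n-2m_1$ and $n-2m_2$ of Section \ref{sec:case3} are simply replaced by $n-r_1$ and $n-r_2$, while $\Lambda$ and $\Omega$ still collect all finite invariant zeros of $G_{zu}$ and $G_{yw}$ (the infinite zeros were separated out by the Weierstrass form \eqref{zeroGzu}--\eqref{tgdef}). In particular, the imaginary-axis invariant zeros $\lambda_j, \omega_j$ still sit inside the blocks $\Lambda_0, \Omega_0$, with the same null vectors $(s_j, f_j)$ and $(t_j, g_j)$. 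Re-running the Schur-complement / Lyapunov-integral argument from Sections \ref{sec:case1}--\ref{sec:case3} verbatim on \eqref{LMI_sing7} then yields
\[
\tilde\gamma \;=\; \max\Bigl\{\hat\gamma,\ \bigl|\tfrac{s_j^T b_1}{f_j}+d_{11}\bigr|\ (j=1,\dots,m_1),\ \bigl|\tfrac{t_j^T c_1}{g_j}+d_{11}\bigr|\ (j=1,\dots,m_2)\Bigr\},
\]
with $\hat\gamma=\lambda_{\max}(E)$ built from the unstable invariant zeros as in \eqref{gammacase2}. The convention that the $|s_j^Tb_1/f_j+d_{11}|$ (resp. $|t_j^Tc_1/g_j+d_{11}|$) terms vanish when $G_{zu}$ (resp. $G_{yw}$) has no imaginary-axis zeros is inherited directly from Theorem \ref{thm:case3}. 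Combining (i) and (ii) yields the claimed expression.

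The main obstacle is the verification that the reductions of Sections \ref{sec:case2}--\ref{sec:case3} really do transfer to \eqref{LMI_sing7} without secretly using $d_{12}\neq 0$ or $d_{21}\neq 0$. The delicate point is that the simplification from \eqref{LMI} to \eqref{LMI2} in Section \ref{subsec:reduction} relied on non-singularity of $S$ and $T$, which fails here. However, $\tilde\gamma$ is computed from \eqref{LMI_sing7}, which has already been derived through Lemmas \ref{lem:dual_sing}--\ref{lem:dual_sing2} and Proposition \ref{equal} without that non-singularity hypothesis; the variables $\hat X, \hat Y$ in \eqref{LMI_sing7} already live in the reduced $(n-r_1)$- and $(n-r_2)$-dimensional spaces. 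Consequently the Schur complementation, the Lyapunov representation \eqref{FGH}, and the facial-reduction steps of Lemmas \ref{lem:dstructure} and \ref{reduction} apply to \eqref{LMI_sing7} unchanged, and Theorem \ref{thm:case3} indeed produces the required expression for $\tilde\gamma$.
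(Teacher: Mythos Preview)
Your proposal is correct and follows essentially the same approach as the paper: invoke the chain of equalities already established in Section~\ref{sec:case4} to reach $\gamma^*=\max\{\tilde\gamma,|d_{11}|\}$, identify $|d_{11}|=|G_{zw}(\infty)|$, and then apply Theorem~\ref{thm:case3} (or Theorems~\ref{thm:case1}--\ref{thm:case2} as appropriate) to \eqref{LMI_sing7} to obtain the expression for $\tilde\gamma$. One minor remark: the identity $G_{zw}(\infty)=d_{11}$ holds unconditionally from the definition $G_{zw}(s)=c_1^T(sI_n-A)^{-1}b_1+d_{11}$, so there is no need to invoke the vanishing of $d_{12}$ or $d_{21}$ (Remark~\ref{remark:main}\ref{R3} explains why $\gamma^*\ge |d_{11}|$, not why $G_{zw}(\infty)=d_{11}$).
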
 
\begin{proof}
We have already seen that all the of optimal values of \eqref{LMI_sing2}, \eqref{dual_sing2}, \eqref{dual_sing3}, \eqref{dual_sing4}, \eqref{LMI_sing4} and \eqref{LMI_sing6} are equal to $\gamma^*$. 
As $|d_{11}| = |G_{zw}(\infty)|$, we obtain the desired result. 
\end{proof}

\section{Application of Theorem \ref{thm:main} to limitation analysis}\label{sec:application}
We here provide an application of Theorem \ref{thm:main} to the limitation analysis of $H_\infty$ output feedback control for sensitivity function. This application has been already considered in \cite[Theorem 5.1]{Chen00} via Nevanlinna-Pick interpolation. We provide the same result for SISO dynamical systems from Theorem \ref{thm:main}. 

Consider the following generalized plant. 
\begin{align}\label{SISOsenfun}
&\left\{
\begin{array}{lll}
\dot{x} &=& Ax + bu\\
z & = & c^Tx + w \\
y &=& c^Tx + w
\end{array}
\right. 
\end{align}
In addition to Assumption \ref{A}, we impose that \eqref{SISOsenfun} has no invariant zeros on the imaginary axis. 
Then we can rewrite \eqref{Partition.vec} as follows.
\begin{align}\label{Partition.sen.z}
\begin{pmatrix}
S_-^T&f_-\\
S_+^T&f_+
\end{pmatrix}
\begin{pmatrix}
A & b \\
c^T & 0
\end{pmatrix} &= \begin{pmatrix}
\Lambda_-^T &\\
& \Lambda_+^T
\end{pmatrix}
\begin{pmatrix}
S_-^T&0\\
S_+^T&0 
\end{pmatrix}, \\ \label{Partition.sen.p}\begin{pmatrix}
T_-^T&g_-\\
T_+^T&g_+
\end{pmatrix}
\begin{pmatrix}
A^T & c \\
0 & 1
\end{pmatrix} &= \begin{pmatrix}
\Omega_-^T &\\
& \Omega_+^T
\end{pmatrix}
\begin{pmatrix}
T_-^T&0\\
T_+^T&0 
\end{pmatrix}. 
\end{align}
Then any eigenvalue of $\Lambda_+$ (resp. $\Omega_+$) is an unstable zero (resp. pole) of \eqref{SISOsenfun}. Let $\mathcal{Z}_{++}$ and $\mathcal{P}_{++}$ be the sets of unstable zeros and poles in \eqref{SISOsenfun}, respectively. In addition, $h_{1+}$ and $h_{2+}$ are rewritten by $h_{1+} = S_+^Tb_1 + d_{11}f_+ = f_+$ and $h_{2+} = T_+^Tc_1 + d_{11}g_+ = T_+^Tc + g_+ = 0$, and thus $H_{1+} = F_+$ and $H_{2+} = O$. By using those equations, we can simplify $\gamma^*$ in Theorem \ref{thm:main} as follows. 
 \begin{align*}
 \gamma^* &= \max\left\{\hat{\gamma}, 1\right\}, \mbox{ where} \\
 \hat{\gamma} &= \lambda_{\max}\begin{pmatrix}
 O & F_+^{-1/2}J_+^TG_+^{-1/2} &I & O \\
G_+^{-1/2}J_+F^{-1/2}_+& O & O & O\\
 I & O & O &O\\
 O & O & O & O
 \end{pmatrix} \\
 &= \sqrt{
1 + \sigma_{\max}^2(G_+^{-1/2}J_+F^{-1/2}_+)
}. 
 \end{align*}
 In fact, since $F_+$ and $G_+$ are positive definite, we have 
\begin{align*}
 \gamma &\ge \lambda_{\max}\begin{pmatrix}
 O & F_+^{-1/2}J_+^TG_+^{-1/2} &I & O \\
G_+^{-1/2}J_+F^{-1/2}_+& O & O & O\\
 I & O & O &O\\
 O & O & O & O
 \end{pmatrix} \\
 &\iff \begin{pmatrix}
\gamma I & F_+^{-1/2}J_+^TG_+^{-1/2} &I & O \\
G_+^{-1/2}J_+F^{-1/2}_+& \gamma I & O & O\\
 I & O & \gamma I &O\\
 O & O & O & \gamma I
 \end{pmatrix}\succeq O\\
 &\iff \gamma I - \frac{1}{\gamma}(I + F_+^{-1/2}J_+^TG_+^{-1}J_+F_+^{-1/2}) \succeq O\\
 & \iff \gamma \ge \sqrt{
1 + \sigma_{\max}^2(G^{-1/2}_+J_+F_+^{-1/2})
}. 
\end{align*}

From this result, we can obtain some results in \cite{Chen00, Ebihara16b}: 
\begin{example}
We consider the case where (\ref{SISO}) has a solo unstable zero $z$ with degree 1 and a solo unstable pole $p$ with degree 1. Furthermore, assume $d=1$. Then we have 
\[
F_+ = \frac{f^2}{2z}, G_+ = \frac{g^2}{2p} \mbox{ and } J_+ = \frac{fg}{p-z}. 
\]
Hence $G_+^{-1/2}J_+F_+^{-1/2} = 2\frac{\sqrt{zp}}{p-z}$ and $\gamma^* = \left|\frac{p+z}{p-z}\right|$. 
\end{example}

\begin{example}
We consider the case where (\ref{SISO}) has $k$ unstable zero $z_1, \ldots, z_k$ with degree 1 and $k$ unstable poles $p_1, \ldots, p_k$ with degree 1. This implies that both matrices $\Lambda_+$ and $\Omega_+$ are diagonal. Furthermore, assume $d=1$. Then $F_+, G_+\in\mathbb{S}^{k}$ and $J_+$ can be written as follows:  
\[
F_+ = \left(\displaystyle \frac{f_{i}f_{j}}{z_i+z_j}\right)_{1\le i, j\le k}, 
G_+ = \left(\displaystyle \frac{g_{i}g_{j}}{p_i+p_j}\right)_{1\le i, j\le k} 
\mbox{ and } J_+ = \left(\displaystyle \frac{g_{i}f_{j}}{p_i-z_j}\right)_{1\le i, j\le k}. 
\]
In this case, the result in this subsection (seems to) coincide to \cite[Theorem 5.1]{Chen00}. Furthermore, in the case where (\ref{SISO}) has $k$ unstable zero $z_1, \ldots, z_k$ with degree 1 and one unstable pole $p$ with degree 1, we obtain by using Symbolic Math Toolbox \cite{symbolic}
\[
\sigma_{\max}(G^{-1/2}_+J_+F_+^{-1/2}) = \frac{\sqrt{p(z_1 + z_2)(p^2 + z_1z_2)}}{|p - z_1||p - z_2|}, 
\]
and thus $\gamma^* = \displaystyle\frac{|p+z_1||p+z_2|}{|p-z_1||p-z_2|}$. 
\end{example}
\begin{example}
We consider the case where (\ref{SISO}) has a solo unstable zero $z$ with degree 2 and a solo unstable poles $p$ with degree 2. This implies that both matrices $\Lambda_+$ and $\Omega_+$ have the forms of 
\[
\Lambda_+ = \begin{pmatrix}
z & 1 \\
0 & z
\end{pmatrix} \mbox{ and }\Omega_+ = \begin{pmatrix}
p & 1 \\
0 & p
\end{pmatrix}. 
\]
Then 
\[
F_+ = 
\begin{pmatrix}
\displaystyle\frac{1}{2z} &\displaystyle\frac{1}{4z^2} \\
\displaystyle\frac{1}{4z^2} &\displaystyle\frac{1}{4z^3}
\end{pmatrix}, G_+ =\begin{pmatrix}
\displaystyle\frac{1}{2p} & \displaystyle\frac{1}{4p^2}\\
\displaystyle\frac{1}{4p^2} &\displaystyle\frac{1}{4p^3}
\end{pmatrix} \mbox{ and } J_+ = \begin{pmatrix}
\displaystyle\frac{1}{z-p} & \displaystyle\frac{1}{(z-p)^2}\\
-\displaystyle\frac{1}{z-p} & \displaystyle\frac{2}{(z-p)^3}
\end{pmatrix}. 
\]
In this case, we have 
\[
\sigma_{\max}\left(G^{-1/2}_+J_+F_+^{-1/2}\right) = 2\displaystyle\frac{\sqrt{pz}}{(z-p)^3}\left(
(p+z)^4 + \sqrt{p^4+14p^2z^2+z^4}
\right). 
\]
This is the same as the result in \cite[Theorem 2]{Ebihara16b}. 
\end{example}

\section{Conclusion}\label{sec:conclusion}
We considered the LMI problem of $H_\infty$ output feedback control problem for the SISO dynamical system \eqref{SISO}. We assumed the stabilizability of $(A, b_2)$ and the detectability of $(A, c_2^T)$. In addition to these assumptions, we impose some technical assumptions for simplicity. Then we provided an explicit form of the optimal value of the LMI problem. When all invariant zeros of $G_{zu}$ and $G_{yw}$ are in the open right half plane, the Schur complement and Lyapunov equation are useful to derive the explicit form. Otherwise, we had seen that the dual problem is not strictly feasible. Then facial reduction is applicable to reduce the size of the dual problem. As a result, the LMI problem of the reduced dual problem is also simplified. 

Our explicit form of the optimal value is the unification of some results in the literature of $H_\infty$ performance limitation analysis. For instance, we had seen that we obtain the same results in \cite{Chen00, Ebihara16b}. 

We considered the case of the SISO time-invariant dynamical system. It is natural to consider the case of the MIMO time-invariant dynamical system. Our analysis will be easily extended to the case of MIMO with $m_1=m_2=p_1=p_2$. The Weierstrass form is still useful in the case. Otherwise, we will need to consider the Kronecker canonical forms of the transfer functions rather than their Weierstrass forms because their Rosenbrock system matrices are not square. A variant of Kronecker canonical form developed in \cite[Chapter 1]{SchererPhD} may also be useful in the simplification and reduction of the LMI problem obtained from a general MIMO system. \cite{SchererPhD} provided a simplification of Riccati equations and inequalities obtained from $H_\infty$ control problem. This consideration is not straightforward, and the extension of our explicit form is future study. 
\appendix

\section{Fundamental facts on semidefinite program}\label{app:sdp}
To prove the zero duality gap in some theorems and the existence of optimal solutions of some optimization problems 
under Assumption \ref{A1}, 
we need to use the strong duality in Theorems \ref{sdt} and
\ref{thm:fr} given below. 
For the statements of these two theorems, 
however, we first need to introduce some notation and symbols on semidefinite program.  

Let us consider the LMI problem
 \begin{align}\label{lmi_def}
 \theta^*_P&=\inf
 \left\{
 d^T y : \sum_{j=1}^m y_j L_{ij} - L_{i0} \in\mathbb{S}^{n_i}_+ \ (i=1, \ldots, p), y\in\mathbb{R}^m
 \right\} 
 \end{align}
where $L_{i0}, \ldots, L_{im}\in\mathbb{S}^{n_i} \ (i=1, \ldots, p)$ and
$d\in\mathbb{R}^m$.  
The problem \eqref{lmi_def} is said to be {\itshape strictly feasible} 
if there exists $y_j \ (j=1, \ldots, m)$ such that 
$\sum_{j=1}^m y_j L_{ij} - L_{i0} \in\mathbb{S}^{n_i}_{++}$ for all
$i=1, \ldots, p$.  
On the other hand, its dual can be formulated as follows:  
 \begin{align}\label{lmi_dual_def}
 \theta^*_D&=\sup_{X_i} \left\{\sum_{i=1}^p L_{i0}\bullet X_{i} : \begin{array}{l}
 \displaystyle\sum_{i=1}^p L_{ij}\bullet X_{i} = d_j \ (j=1, \ldots, m), \\
 X_i \in\mathbb{S}^{n_i}_+ \ (i=1, \ldots, p)
 \end{array}
 \right\}.  
 \end{align}
The problem \eqref{lmi_dual_def} is said to be {\itshape strictly feasible} 
if there exists $X_i\in\mathbb{S}^{n_i}_{++} \ (i=1, \ldots, p)$ 
such that $\sum_{i=1}^p L_{ij}\bullet X_{i} = d_j $ for all $j=1, \ldots, m$. 
We call the value $\theta^*_P-\theta^*_D$ 
{\itshape the duality gap between \eqref{lmi_def} and \eqref{lmi_dual_def}}.
 
For every feasible solution $y_j$ of \eqref{lmi_def} and $X_i$ of
\eqref{lmi_dual_def}, 
we have $d^Ty \ge \sum_{i=1}^p L_{i0}\bullet X_{i}$. 
This inequality is called {\itshape the weak duality} for
\eqref{lmi_def} and \eqref{lmi_dual_def}. 
The weak duality implies $\theta^*_P\ge \theta^*_D$, i.e., the duality gap is nonnegative. 
It is well-known that the duality gap between \eqref{lmi_def} and
\eqref{lmi_dual_def} is zero, 
i.e. $\theta^*_P = \theta^*_D$ holds under a mild assumption. 
This is called {\itshape the strong duality} for \eqref{lmi_def} and
\eqref{lmi_dual_def}. 
We summarize the details of the strong duality in the next theorem.  
\begin{theorem}\label{sdt}
(see e.g., \cite[Theorem 2.2]{deKlerk02})
If \eqref{lmi_def} is strictly feasible and 
the optimal value is bounded below, 
then $\theta_P^*=\theta_D^*$ and \eqref{lmi_dual_def} has an optimal solution. 
Similarly, if \eqref{lmi_dual_def} is strictly feasible 
and the optimal value is bounded above, 
then $\theta_P^*=\theta_D^*$ and \eqref{lmi_def} has an optimal solution. 
\end{theorem}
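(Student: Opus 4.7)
The plan is to prove the first half by a standard convex separation argument; the second half then follows by swapping the roles of primal and dual. Assume \eqref{lmi_def} is strictly feasible with finite $\theta_P^*$. I would package the primal as a conic linear program over the self-dual cone $K := \mathbb{S}_+^{n_1}\times\cdots\times\mathbb{S}_+^{n_p}$, writing $\mathcal{A}(y) := (\sum_j y_j L_{1j},\ldots,\sum_j y_j L_{pj})$, $b := (L_{10},\ldots,L_{p0})$, so that the constraints read $\mathcal{A}(y)-b \in K$. The adjoint $\mathcal{A}^*$ sends $X = (X_1,\ldots,X_p) \in \prod_i \mathbb{S}^{n_i}$ to the vector with $j$-th entry $\sum_i L_{ij}\bullet X_i$, which is exactly what appears in the dual equality constraints of \eqref{lmi_dual_def}.

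Next I would form the convex set
\[
C := \{(\mathcal{A}(y)-b-S,\ d^Ty - \theta_P^* + \rho)\ :\ y\in\mathbb{R}^m,\ S\in K,\ \rho\ge 0\}
\]
in $\prod_i \mathbb{S}^{n_i}\times\mathbb{R}$, and observe that by definition of $\theta_P^*$ the point $(0,-\varepsilon)$ lies outside $C$ for every $\varepsilon>0$, while $C$ is convex with nonempty interior thanks to Slater. A standard separation theorem then supplies a nonzero $(X,\lambda)\in\prod_i\mathbb{S}^{n_i}\times\mathbb{R}$ with $\langle X,\mathcal{A}(y)-b-S\rangle + \lambda(d^Ty - \theta_P^*+\rho) \ge 0$ for all admissible $y,S,\rho$. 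Varying $y$ linearly forces $\mathcal{A}^*(X)+\lambda d = 0$; varying $S\in K$ and using self-duality of $K$ gives $-X\in K$, i.e.\ $X_i\in\mathbb{S}_+^{n_i}$; varying $\rho\ge 0$ gives $\lambda\ge 0$; and setting $y=0,\,S=0,\,\rho=0$ yields $-\langle X,b\rangle \ge \lambda\theta_P^*$.

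The decisive step is excluding $\lambda = 0$. If $\lambda=0$, then $\mathcal{A}^*(X)=0$ and, using a Slater point $\hat{y}$ with $\mathcal{A}(\hat{y})-b \in \mathrm{int}(K)$, the separating inequality evaluated at $y=\hat{y}$, $S=0$ gives $\langle X,\mathcal{A}(\hat{y})-b\rangle \ge 0$, while $-X\in K$ and $\mathcal{A}(\hat{y})-b\in\mathrm{int}(K)$ together force $\langle X,\mathcal{A}(\hat{y})-b\rangle \le 0$ with equality only when $X=0$; combined with $\mathcal{A}^*(X)=0$ this contradicts $(X,\lambda)\ne 0$. Hence $\lambda>0$, and rescaling $X \leftarrow X/\lambda$ produces $X_i\in\mathbb{S}_+^{n_i}$ with $\sum_i L_{ij}\bullet X_i = d_j$ (dual feasibility) and $\sum_i L_{i0}\bullet X_i = \langle X,b\rangle \ge \theta_P^*$. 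Combined with the weak duality inequality $\theta_P^*\ge \theta_D^*$ noted just before the theorem, this gives $\theta_P^* = \theta_D^*$ and exhibits $X$ as a dual optimal solution.

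The main obstacle is the Slater-based ruling out of $\lambda=0$; everything else is bookkeeping. For the second assertion, I would apply exactly the same argument to the dual \eqref{lmi_dual_def} viewed as a primal conic program (its own dual, after eliminating via the KKT structure, is \eqref{lmi_def}), using strict dual feasibility in place of strict primal feasibility; this produces an optimal $y$ for \eqref{lmi_def} and again $\theta_P^* = \theta_D^*$.
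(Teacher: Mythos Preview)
The paper does not prove this theorem at all; it is quoted as a known result with a citation to de~Klerk. Your separation-based argument is exactly the standard route used in such references, so there is nothing to compare against in the paper itself.

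That said, your sketch has a sign slip that propagates through the last paragraph. From the inequality $\langle X,\,-S\rangle \ge \text{const}$ over the cone $K$ you correctly deduce $-X\in K$, but this means each $-X_i\succeq 0$, not $X_i\succeq 0$. Consequently, $\mathcal{A}^*(X)+\lambda d=0$ gives $\mathcal{A}^*(-X/\lambda)=d$, so the dual-feasible point is $\tilde X:=-X/\lambda$, not $X/\lambda$; and the inequality $-\langle X,b\rangle\ge\lambda\theta_P^*$ becomes $\langle\tilde X,b\rangle\ge\theta_P^*$, which is what you need. Once you carry the minus sign through, the argument is fine.

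One more minor point: the claim that $C$ has nonempty interior does not actually require the Slater point (the cone $K$ already has interior in $\prod_i\mathbb{S}^{n_i}$, so subtracting it opens up the first coordinate). Slater is used precisely where you say it is decisive, namely in ruling out $\lambda=0$. A slightly cleaner way to set up the separation is to observe directly that $(0,0)\in\partial C$ (it lies in $\overline{C}$ by taking a minimizing sequence, and your argument with $(0,-\varepsilon)\notin C$ shows it is not interior), then invoke the supporting-hyperplane theorem once rather than separating each $(0,-\varepsilon)$.
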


Finally, we provide a known fact on the strict feasibility of
\eqref{lmi_def} and \eqref{lmi_dual_def}. 
\begin{theorem}\label{thm:fr}
(see e.g., \cite[Lemmas 1 and 2]{Trnovska05})
For \eqref{lmi_def}, exactly one of 
the following two statements is true:
\begin{enumerate}
\item\label{P1} \eqref{lmi_def} is strictly feasible. 
\item\label{P2} There exist $\hat{X}_i\in\mathbb{S}^{n_i}_+$ \ $(i=1, \ldots, p)$ such that 
   at least one of $\hat{X}_i$ is nonzero, 
   $\sum_{i=1}^p L_{i0}\bullet \hat{X}_i \ge 0$ and 
   $\sum_{i=1}^p L_{ij}\bullet \hat{X}_i = 0$ for all $j= 1, \ldots, m$.
\end{enumerate}
In particular, if \ref{P2} holds and 
$\sum_{i=1}^p L_{i0}\bullet \hat{X}_i >0$, 
then \eqref{lmi_def} is infeasible. 
Similarly, for \eqref{lmi_dual_def}, 
exactly one of the following two statements is true:
\begin{enumerate}
\item\label{P3} \eqref{lmi_dual_def} is strictly feasible. 
\item\label{P4} There exists $\hat{y}\in\mathbb{R}^{m}\setminus\{0\}$
   such that 
   $\sum_{j=1}^m L_{ij}\hat{y}_j\in\mathbb{S}^{n_i}_+$ for all $i=1, \ldots, p$ and $d^T\hat{y} \le 0$.
\end{enumerate}
In particular, if \ref{P4} holds and $d^T\hat{y} < 0$, 
then \eqref{lmi_dual_def} is infeasible. 
\end{theorem}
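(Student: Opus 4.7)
The plan is to prove Theorem \ref{thm:fr} as a theorem of alternatives via a separating hyperplane argument on the product cone $\mathbb{S}^{n_1}_+\times\cdots\times\mathbb{S}^{n_p}_+$, equipped with the natural inner product $\langle (X_1,\ldots,X_p),(Y_1,\ldots,Y_p)\rangle=\sum_{i=1}^p X_i\bullet Y_i$. I will handle the primal assertion in detail; the dual assertion is symmetric.

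First I would show the two alternatives \ref{P1} and \ref{P2} cannot hold simultaneously. Suppose $y\in\mathbb{R}^m$ gives $M_i:=\sum_{j=1}^m y_j L_{ij}-L_{i0}\in\mathbb{S}^{n_i}_{++}$ for all $i$, and that $(\hat X_1,\ldots,\hat X_p)$ satisfies \ref{P2}. Then
\begin{equation*}
\sum_{i=1}^p \hat X_i\bullet M_i \;=\; \sum_{j=1}^m y_j\!\sum_{i=1}^p \hat X_i\bullet L_{ij}\;-\;\sum_{i=1}^p \hat X_i\bullet L_{i0}\;=\;-\sum_{i=1}^p \hat X_i\bullet L_{i0}\;\le\;0,
\end{equation*}
while the left-hand side is strictly positive because some $\hat X_{i_0}\neq0$ lies in $\mathbb{S}^{n_{i_0}}_+$ and $M_{i_0}\in\mathbb{S}^{n_{i_0}}_{++}$, a contradiction. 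The ``in particular'' clause after \ref{P2} follows by the same computation: if any feasible $y$ existed, $\sum_i \hat X_i\bullet M_i\ge 0$, yet the identity forces it to equal $-\sum_i \hat X_i\bullet L_{i0}<0$.

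Next I would show at least one of \ref{P1}, \ref{P2} must hold. Assume \ref{P1} fails. Define the affine set $\mathcal{A}=\{(M_1,\ldots,M_p):M_i=\sum_{j=1}^m y_j L_{ij}-L_{i0},\ y\in\mathbb{R}^m\}$ and the open convex cone $\mathcal{C}=\mathbb{S}^{n_1}_{++}\times\cdots\times\mathbb{S}^{n_p}_{++}$. By hypothesis $\mathcal{A}\cap\mathcal{C}=\emptyset$. Since $\mathcal{A}$ is convex and $\mathcal{C}$ is open and convex, a standard separating hyperplane theorem yields a nonzero tuple $(\hat X_1,\ldots,\hat X_p)\in\mathbb{S}^{n_1}\times\cdots\times\mathbb{S}^{n_p}$ and a constant $\alpha\in\mathbb{R}$ with
\begin{equation*}
\sum_{i=1}^p \hat X_i\bullet M_i \;\le\;\alpha\;\le\;\sum_{i=1}^p \hat X_i\bullet Z_i\qquad\forall(M_1,\ldots,M_p)\in\mathcal{A},\ \forall(Z_1,\ldots,Z_p)\in\mathcal{C}.
\end{equation*}
Taking $Z_i\to 0^+$ (scaling each $Z_i$ down) forces $\alpha\le 0$; taking $Z_i=\varepsilon I_{n_i}+Z_i^0$ with $Z_i^0\in\mathbb{S}^{n_i}_{++}$ arbitrary and $\varepsilon\downarrow0$, and letting the norms of $Z_i^0$ in any direction of $\mathbb{S}^{n_i}_+$ tend to infinity, forces $\hat X_i\in\mathbb{S}^{n_i}_+$ for each $i$ (otherwise one could drive the right-hand side to $-\infty$). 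Because $\mathcal{A}$ is affine and $\sum_i \hat X_i\bullet M_i$ is bounded above on it, the linear part must vanish, giving $\sum_{i=1}^p \hat X_i\bullet L_{ij}=0$ for $j=1,\ldots,m$; the constant term then yields $-\sum_i \hat X_i\bullet L_{i0}\le\alpha\le 0$, i.e.\ $\sum_i \hat X_i\bullet L_{i0}\ge 0$. Thus \ref{P2} holds.

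For the dual statement about \eqref{lmi_dual_def}, I would apply the same scheme to the affine set $\mathcal{B}=\{(X_1,\ldots,X_p):\sum_i L_{ij}\bullet X_i=d_j,\ j=1,\ldots,m\}$ and the cone $\mathcal{C}$, separating when $\mathcal{B}\cap\mathcal{C}=\emptyset$. The separator is identified with the vector of Lagrange multipliers $\hat y\in\mathbb{R}^m$ for the linear equations, and the same argument produces $\sum_{j=1}^m L_{ij}\hat y_j\in\mathbb{S}^{n_i}_+$ and $d^T\hat y\le0$. Mutual exclusivity and the ``in particular'' clause mirror the primal computation. The main technical obstacle is justifying the closure/recession arguments needed to pass from the separating inequality to $\hat X_i\succeq O$ (resp.\ the polyhedral identification of the separator in the dual case) without invoking heavier machinery; I would handle this by choosing test directions in $\mathcal{C}$ along rank-one matrices $vv^T$ and letting their norms tend to infinity, which is elementary and keeps the argument self-contained.
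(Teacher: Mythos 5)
The paper offers no proof of Theorem \ref{thm:fr}: it is quoted directly from \cite[Lemmas 1 and 2]{Trnovska05}, so there is no in-paper argument to compare against. Your separating-hyperplane proof is the standard route to such alternative theorems and is essentially what the cited source does. The primal half is correct and complete: exclusivity follows from pairing the certificate against a strictly feasible slack tuple, and existence follows from separating the affine set $\mathcal{A}$ from the open cone $\mathcal{C}$, using boundedness of a linear functional on an affine set to force orthogonality to its direction space, and rank-one recession directions to force $\hat{X}_i\in\mathbb{S}^{n_i}_+$.

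Two points in the dual half are not yet covered by "mirror the primal computation." First, your separation of $\mathcal{B}$ from $\mathcal{C}$ presupposes $\mathcal{B}\neq\emptyset$; if the linear system $\sum_{i}L_{ij}\bullet X_i=d_j$ has no symmetric solution at all, you need a separate (easy) appeal to the Fredholm alternative to produce $\hat{y}\neq 0$ with $\sum_j\hat{y}_jL_{ij}=O$ for every $i$ and $d^T\hat{y}<0$ after a sign flip. Second, and more substantively, exclusivity does not mirror the primal: in the primal the pairing is strictly positive because the certificate is a nonzero positive semidefinite \emph{matrix} tuple, whereas in the dual the analogous step needs some $\sum_j\hat{y}_jL_{i_0j}\neq O$, which does not follow from $\hat{y}\neq 0$ unless the tuples $(L_{1j},\ldots,L_{pj})$, $j=1,\ldots,m$, are linearly independent. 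Without that standing assumption, a nonzero $\hat{y}$ with $\sum_j\hat{y}_jL_{ij}=O$ for all $i$ and $d^T\hat{y}=0$ satisfies the second alternative even when \eqref{lmi_dual_def} is strictly feasible, so the "exactly one" claim fails as literally stated; with it, your pairing gives $0<\sum_i\bigl(\sum_j\hat{y}_jL_{ij}\bigr)\bullet X_i=d^T\hat{y}\le 0$, the desired contradiction. You should make that independence hypothesis explicit (it is the usual standing assumption under which the cited lemmas are stated) or note where it enters.
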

\section{Proofs on the strong duality}\label{app:proofs}
\subsection{Proof on the zero duality gap in Theorem \ref{thm:duality}}\label{subapp:proof1}

It is clear that the optimal value $\gamma^*$ 
of \eqref{LMI} is nonnegative. 
Therefore from Theorem \ref{sdt} all the assertions of Theorem
\ref{thm:duality}
can be verified by proving that (I) $\Leftrightarrow$ (II). 
We first prove (I) $\Rightarrow$ (II).  
Since $(A,b_2)$ is stabilizable, 
there exists $K\in\mathbb{R}^{1\times n}$ and $X_0-I_n\in \bbS_{++}^n$ such that
$-\He((A+b_2K)X_0)\in \bbS_{++}^n$.  
Similarly, 
since $(A,c_2)$ is detectable, 
there exists $L\in\mathbb{R}^{n\times 1}$ and $Y_0-I_n\in \bbS_{++}^n$ such that
$-\He(Y_0(A+Lc_2^T))\in \bbS_{++}^n$.  
It follows that for sufficiently large $\gamma$ we have
\begin{align*}
-\begin{pmatrix}
\He((A+b_2K)X_0) & X_0(c_1^T+d_{12}K)^T& b_1 \\
(c_1^T+d_{12}K)X_0&-\gamma & d_{11}\\
b_1^T & d_{11} & -\gamma 
\end{pmatrix}&\in\mathbb{S}_{++}^{n+2}, \\
-\begin{pmatrix}
\He(Y_0(A+Lc_2^T) & Y_0(b_1+Ld_{21}) & c_1 \\
(b_1+Ld_{21})^TY_0&-\gamma & d_{11}\\
c_1^T & d_{11} & -\gamma 
\end{pmatrix}&\in\mathbb{S}_{++}^{n+2},\\
\begin{pmatrix}
X_0 & -I_n\\
-I_n & Y_0
\end{pmatrix}&\in\mathbb{S}^{2n}_{++}. 
\end{align*}
These can be restated equivalently as
\begin{align*}
-\left(
\begin{pmatrix}
\He(AX_0) & X_0c_1& b_1 \\
c_1^TX_0&-\gamma & d_{11}\\
b_1^T & d_{11} & -\gamma 
\end{pmatrix}+
\He\left(
\begin{pmatrix}
b_2\\
d_{12}\\
0
\end{pmatrix}
\begin{pmatrix}
KX_0 & 0 & 0
\end{pmatrix}
\right)
\right) &\in\mathbb{S}_{++}^{n+2}, \\
-\left(
\begin{pmatrix}
\He(Y_0A) & Y_0b_1 & c_1 \\
b_1^TY_0&-\gamma & d_{11}\\
c_1^T & d_{11} & -\gamma 
\end{pmatrix}+
\He\left(
\begin{pmatrix}
c_2\\
d_{21}\\
0
\end{pmatrix}
\begin{pmatrix}
L^TY_0 & 0 & 0
\end{pmatrix}
\right) 
\right) &\in\mathbb{S}_{++}^{n+2}, \\
\begin{pmatrix}
X_0 & -I_n\\
-I_n & Y_0
\end{pmatrix}&\in\mathbb{S}_{++}^{2n}. 
\end{align*}
The above matrix inequalities clearly show that 
 LMI problem \eqref{LMI} is strictly feasible and hence (II) holds.  

To prove (I)$\Leftarrow$(II), suppose (II) holds.  
Then, from Elimination Lemma \cite{Gahinet94,Iwasaki94}, 
there exist $X,Y,\gamma$ and
$F_1,F_2,F_3$ and $G_1,G_2,G_3$ of appropriate size such that
\begin{align*}
-\left(
\begin{pmatrix}
\He(AX) & Xc_1& b_1 \\
c_1^TX&-\gamma & d_{11}\\
b_1^T & d_{11} & -\gamma 
\end{pmatrix}+
\He\left(
\begin{pmatrix}
b_2\\
d_{12}\\
0
\end{pmatrix}
\begin{pmatrix}
F_1 & F_2 & F_3
\end{pmatrix}
\right)
\right) &\in\mathbb{S}_{++}^{n+2}, \\
-\left(
\begin{pmatrix}
\He(YA) & Yb_1 & c_1 \\
b_1^TY&-\gamma & d_{11}\\
c_1^T & d_{11} & -\gamma 
\end{pmatrix}+
\He\left(
\begin{pmatrix}
c_2\\
d_{21}\\
0
\end{pmatrix}
\begin{pmatrix}
G_1 & G_2 & G_3
\end{pmatrix}
\right) 
\right) &\in\mathbb{S}_{++}^{n+2}, \\
\begin{pmatrix}
X & -I_n\\
-I_n & Y
\end{pmatrix}&\in\mathbb{S}_{++}^{2n}. 
\end{align*}
This in particular implies that
$X\in\bbS_{++}^n$ and $-\He((A+b_2K)X)\in \bbS_{++}^n$ hold with
$K=F_1X^{-1}$ and hence $(A,b_2)$ is stabilizable.  
Similarly, we have
$Y\in\bbS_{++}^n$ and $-\He(Y(A+Lc_2^T))\in \bbS_{++}^n$ hold with
$L=Y^{-1}G_1^T$ and hence $(A,c_2)$ is detectable. 
It follows that (I) holds, and this completes the proof.

\subsection{Proof of Lemma \ref{lem:exist}}\label{subapp:proof2}
We will prove that the dual of \eqref{LMI2} is strictly feasible. 
Theorem \ref{thm:duality} and the weak duality on LMI problems introduced in 
\ref{app:sdp} imply that the optimal value of the dual is bounded above. 
Hence, if the dual is strictly feasible, 
then it follows from Theorem \ref{sdt} that \eqref{LMI2} 
has an optimal solution. 
To prove this we use \ref{P3} and \ref{P4} of 
the second part of Theorem \ref{thm:fr}, i.e., 
we prove that there exists no solution 
$(\gamma, X, Y)\in\mathbb{R}\times\mathbb{S}^n_+\times\mathbb{S}^n_+$ such that 
\begin{align}\label{redcert}
&\left\{
\begin{array}{l}
-\begin{pmatrix}
\He(\Lambda_+^TX) - \gamma f_+f_+^T & 0\\
0 & -\gamma 
\end{pmatrix}\in\mathbb{S}^{n+1}_+, \\
-\begin{pmatrix}
\He(\Omega_+^TY) - \gamma g_+g_+^T & 0\\
0 & -\gamma 
\end{pmatrix}\in\mathbb{S}^{n+1}_+, \\
\gamma \le 0, (\gamma, X, Y)\neq (0, O_{n\times n}, O_{n\times n})
\end{array}
\right.
\end{align}
It is clear that $\gamma = 0$ is necessary for \eqref{redcert} being
valid, 
and by substituting it, we obtain
\begin{align}\label{redcert2}
-\He(\Lambda_+^TX)&\in\mathbb{S}^n_+, -\He(\Omega_+^TY)\in\mathbb{S}^{n}_+, (X, Y)\neq (O_{n\times n}, O_{n\times n})
\end{align}
As both $-\Lambda_+$ and $-\Omega_+$ are Hurwitz stable, 
\eqref{redcert2} has no solutions, 
and thus \eqref{redcert} has no solutions.  
It follows that the dual of \eqref{LMI2} is strictly feasible, and hence
\eqref{LMI2} has an optimal solution. 

\subsection{Proof on the zero duality gap in Lemma \ref{lem:dualdual2}}\label{subapp:proof3}
We use Theorem \ref{thm:fr} for this proof. 
The condition \ref{P2} in the first part of Theorem \ref{thm:fr} can be described by
\begin{align}\label{conditionCase2}
&\left\{
\begin{array}{l}
\begin{pmatrix}
Z_{11} & Z_{21}^T\\
Z_{21} &Z_{22}
\end{pmatrix}\in\mathbb{S}^{k_1+1}_+, \begin{pmatrix}
V_{11} & V_{21}^T\\
V_{21} & V_{22}
\end{pmatrix}\in\mathbb{S}^{k_2+1}_+, \\
\begin{pmatrix}
W_{11} & W_{21}^T\\
W_{21} & W_{22}
\end{pmatrix}\in\mathbb{S}^{k_1+k_2}_+, W_{11} = \He(\Lambda_+Z_{11}), W_{22}=\He(\Omega_+V_{11}), \\
f_+^TZ_{11}f_+ +Z_{22} + g_+^TV_{11}g_+ + V_{22} = 0, \\
2(J_+\bullet W_{21} + h_{1+}^T\bullet Z_{21} + h_{2+}\bullet V_{21})\ge 0
\end{array}
\right. 
\end{align}
Any solution of \eqref{conditionCase2} satisfies $Z_{22} = V_{22}=0$. Substituting them, we reformulate \eqref{conditionCase2} into 
\begin{align}\label{conditionCase2_2}
&\left\{
\begin{array}{l}
Z_{11} \in\mathbb{S}^{k_1}_+, V_{11}\in\mathbb{S}^{k_2}_+, \begin{pmatrix}
W_{11} & W_{21}^T\\
W_{21} & W_{22}
\end{pmatrix}\in\mathbb{S}^{k_1+k_2}_+, J_+\bullet W_{21} \ge 0, \\
W_{11} = \He(\Lambda_+Z_{11}), W_{22}=\He(\Omega_+V_{11}), f_+^TZ_{11}f_+ = g_+^TV_{11}g_+ = 0 
\end{array}
\right. 
\end{align}

The next lemma is useful in analyzing \eqref{conditionCase2_2}: 
\begin{lemma}\label{lem:cntobs}
\begin{enumerate}
\item\label{D0s} If $(A, b_2)$ is stabilizable, 
   then $(\Lambda_+, f_+)$ is controllable. 
\item\label{D0d} If $(A, c_2^T)$ is detectable, 
   then $(\Omega_+, g_+)$ is controllable. 
\item\label{D1} If $(\Lambda_+, f_+)$ is controllable, 
   then there does not exist any 
   $Z_{11}\in\mathbb{S}^{k_1}_+\setminus\{O_{k_1}\}$ such that 
   $f_+^TZ_{11}f_+ = 0$ and $\He(\Lambda_+Z_{11})\in\mathbb{S}^{k_1}_+$. 
\item\label{D2} If $(\Omega_+, g_+)$ is controllable, 
   then there does not exist any 
   $V_{11}\in\mathbb{S}^{k_2}_+\setminus\{O_{k_2}\}$ such that 
   $g_+^TV_{11}g_+ = 0$ and $\He(\Omega_+V_{11})\in\mathbb{S}^{k_2}_+$. 
\end{enumerate}
\end{lemma}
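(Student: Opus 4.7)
The plan is to prove statements~\ref{D0s} and~\ref{D0d} by direct appeal to the null-vector characterization in Lemma~\ref{lem:zero}, and to prove statements~\ref{D1} and~\ref{D2} by a Lyapunov--Gramian trace identity followed by Sylvester-equation uniqueness.

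For statement~\ref{D0s}, recall that the columns of $\left(\begin{smallmatrix} S_+ \\ f_+^T \end{smallmatrix}\right)$ are the left null vectors of $G_{zu}$ associated with the unstable invariant zeros $\lambda_1,\ldots,\lambda_{k_1}\in\bbC_+\cup\bbC_0$. The standing assumption that invariant zeros are real and distinct makes the Jordan matrix $\Lambda_+$ diagonal, so by the PBH criterion $(\Lambda_+,f_+)$ is controllable if and only if every entry of $f_+$ is nonzero. The $i$-th entry of $f_+$ coincides with the scalar component $\hat{v}_{\mathrm L}$ in \eqref{zerorank_stbl} for $\lambda=\lambda_i$, and by Lemma~\ref{lem:zero}\,\ref{F2} the stabilizability of $(A,b_2)$ together with $\lambda_i\in\bbC_+\cup\bbC_0$ forces this scalar to be nonzero; hence $(\Lambda_+,f_+)$ is controllable. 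Statement~\ref{D0d} follows by the dual argument from Lemma~\ref{lem:zero}\,\ref{F3} and the detectability of $(A,c_2^T)$.

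For statement~\ref{D1}, fix $Z_{11}\in\bbS^{k_1}_+$ with $f_+^T Z_{11} f_+=0$ and $W:=\He(\Lambda_+ Z_{11})\in\bbS^{k_1}_+$; I will show $Z_{11}=O$. Differentiating the integrand of \eqref{FGH} and using that $-\Lambda_+$ is Hurwitz yields the Lyapunov identity $\Lambda_+^T F_++F_+\Lambda_+=f_+f_+^T$. Taking the trace against $Z_{11}$ and using the cyclic property,
\[
\trace(F_+ W)=\trace\bigl((\Lambda_+^T F_++F_+\Lambda_+)Z_{11}\bigr)=\trace(f_+f_+^T Z_{11})=f_+^T Z_{11} f_+=0.
\]
Because $\Lambda_+$ is diagonal with distinct positive entries $\lambda_1,\ldots,\lambda_{k_1}$, the Gramian $F_+$ has the closed form $\bigl(f_i f_j/(\lambda_i+\lambda_j)\bigr)_{i,j}$, which factors as $\diag(f_i)\cdot\bigl(1/(\lambda_i+\lambda_j)\bigr)_{i,j}\cdot\diag(f_i)$; the middle Cauchy matrix is positive definite, so $F_+\succ O$ exactly when each $f_i\neq0$, i.e., when $(\Lambda_+,f_+)$ is controllable. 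Combined with $W\succeq O$, the identity $\trace(F_+W)=0$ therefore forces $W=O$. The residual equation $\Lambda_+ Z_{11}+Z_{11}\Lambda_+^T=O$ is a Sylvester equation in which $\Lambda_+$ and $-\Lambda_+^T$ have disjoint spectra (the spectrum of $\Lambda_+$ sits in $\bbC_+$), so its unique solution is $Z_{11}=O$. Statement~\ref{D2} is proved in exactly the same way, with $(\Lambda_+,f_+,F_+,Z_{11})$ replaced by $(\Omega_+,g_+,G_+,V_{11})$.

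The main delicate point is the positive definiteness of $F_+$: in full generality, $F_+\succ O$ is equivalent to controllability of the transposed pair $(\Lambda_+^T,f_+)$ rather than $(\Lambda_+,f_+)$, and these two conditions may disagree for non-normal $\Lambda_+$. They coincide here solely because the distinctness assumption forces $\Lambda_+$ to be diagonal, hence symmetric; this is the only place in the argument where the distinctness hypothesis is essential.
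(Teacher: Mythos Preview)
Your proof is correct. For parts~\ref{D0s} and~\ref{D0d}, your entrywise argument via Lemma~\ref{lem:zero}\,\ref{F2} and the paper's PBH pullback argument are essentially the same idea viewed from two angles; both ultimately need $\Lambda_+$ to be diagonal so that left and right eigenvectors coincide.

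For parts~\ref{D1} and~\ref{D2}, your route genuinely differs from the paper's. The paper writes $W:=\He(\Lambda_+Z_{11})\succeq O$, solves the Lyapunov equation explicitly as $Z_{11}=\int_0^\infty e^{-\Lambda_+ t}We^{-\Lambda_+^T t}\,dt$, and then uses $f_+^TZ_{11}f_+=0$ to force $We^{-\Lambda_+^T t}f_+\equiv 0$; a nonzero $W$ would then witness non-controllability of $(\Lambda_+^T,f_+)$, which under the distinctness hypothesis coincides with $(\Lambda_+,f_+)$. You instead pair the Lyapunov identity $\Lambda_+^TF_++F_+\Lambda_+=f_+f_+^T$ against $Z_{11}$ to obtain the trace identity $\trace(F_+W)=f_+^TZ_{11}f_+=0$, conclude $W=O$ from $F_+\succ O$, and finish with Sylvester uniqueness. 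Both arguments are short and both quietly depend on diagonality of $\Lambda_+$ at the same spot (controllability of $(\Lambda_+,f_+)$ versus $(\Lambda_+^T,f_+)$), a point you flag explicitly and the paper leaves implicit. Your trace argument has the minor advantage of isolating exactly where positive definiteness of the Gramian $F_+$ enters, which dovetails with its later use in Theorem~\ref{thm:case1}.
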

\begin{proof}
We prove \ref{D0s} and \ref{D1} only because 
we can prove \ref{D0d} and \ref{D2} by a similar manner. 

To prove \ref{D0s} by contradiction, suppose that there exists 
$\lambda>0$ such 
that $v^T\Lambda_+ = \lambda v^T$ and $v^Tf_+=0$. 
Then, it follows from \eqref{Partition.vec} that $(v^TS_+^T)A = \lambda (v^TS_+^T)$ and 
$(v^TS_+^T)b_2=0$. This implies that $(A, b_2)$ is not stabilizable, 
and thus we obtain a contradiction.  

Again to prove \ref{D1} by contradiction, suppose that there exists a nonzero 
$Z_{11}\in\mathbb{S}^n_+$ such that $f_+^TZ_{11}f_+ = 0$ and 
$\He(\Lambda_+Z_{11})\in\mathbb{S}^n_+$. 
Then, since the latter condition can be seen as the 
Lyapunov equation $\He(\Lambda_+Z_{11})=W$ by introducing $W\in\mathbb{S}^n_+$
and since $-\Lambda_+$ is Hurwitz stable, we can solve this equation explicitly as 
\[
Z_{11} = \int_{0}^\infty\exp(-\Lambda_+ t)W\exp(-\Lambda_+^T t)\,dt. 
\]
It follows from $f_+^TZ_{11}f_+ = 0$ that 
$W\exp(-\Lambda^T_+ t)f_+=0$ for all $t\ge 0$. 
If $W$ is the zero matrix, then $Z_{11}$ is also zero, 
which contradicts the assumption that $Z_{11}$ is nonzero. 
Hence $W$ is nonzero. This implies that 
there exists $p\in\mathbb{C}^{k_1}\setminus\{0\}$ such that 
$p^T\exp(-\Lambda_+^T t)f_+=0$ for all $t\ge 0$. 
This contradicts the controllability of $(\Lambda_+, f_+)$.  
\end{proof}

It follows from Lemma \ref{lem:cntobs} and Assumption \ref{A1} that \eqref{conditionCase2} has no nonzero solutions, and thus we see from Theorem \ref{thm:fr} that \eqref{LMI4case2} is strictly feasible. Therefore the zero duality gap between \eqref{LMI4case2} and \eqref{dual3} holds from Theorem \ref{sdt}. This implies that the existence of an optimal solution of \eqref{dual3}. 

The existence of an optimal solution in \eqref{LMI4case2} can be proved similarly to Lemma \ref{lem:exist}.

\section{Proofs of technical lemmas}\label{app:lemproofs}
%

\subsection{Proof of Lemma \ref{lem:infinitevec}}\label{subapp:lemma2}
Since the relative degree of the dual system \eqref{dualfundSISO} is equal to $r$, we prove only the first part of the Lemma \ref{lem:infinitevec}. We note that the submatrices $Q_{11}$ and $\left(
\begin{smallmatrix}
Q_{11}\\
Q_{21}
\end{smallmatrix}
\right)$ are of full column rank. 

It follows from \ref{b3} in Remark \ref{remark1} that we can set $\left(\begin{smallmatrix}
Q_{12} \\ Q_{22}
\end{smallmatrix}\right) = \left(
\begin{smallmatrix}
0\\
1
\end{smallmatrix}
\right)$ when $r=0$. We consider the case $r>0$. Clearly, $\left(\begin{smallmatrix}
Q_{12} \\ Q_{22}
\end{smallmatrix}\right)$ in \eqref{infiniteRvec} satisfies \eqref{eq:nilpotent3}. Thus it is sufficient to prove that $Q$ is non-singular. For this, we consider $\alpha\in\mathbb{C}^{n-r}$ and $\beta = (\beta_1, \ldots, \beta_{r+1})^T\in\mathbb{C}^{r+1}$ that satisfies
\[
\begin{pmatrix}
Q_{11} & Q_{12}\\
Q_{21} & Q_{22}
\end{pmatrix}\begin{pmatrix}
\alpha\\
\beta
\end{pmatrix} = 0. 
\]
We note $d=0$ and thus $c^TQ_{11} = 0$ because of \eqref{eq:finitezero}. Using this, we obtain $c^TQ_{12}\beta = 0$. Furthermore, from the definition of the relative degree, we obtain $\beta_{r+1} = 0$. Next, we see $c^TAQ_{11} = 0$ from \eqref{cqf2}. Thus we obtain $\beta_{r} = 0$ from $c^TAQ_{12}\beta = 0$ and $\beta_{r+1}=0$. Applying this procedure repeatedly, we obtain
\[
\begin{pmatrix}
Q_{11} & 0\\
Q_{21} & 1
\end{pmatrix}\begin{pmatrix}
\alpha\\
\beta_1
\end{pmatrix} = 0. 
\]
Since $Q_{11}$ is of full column rank, $\alpha = 0$ and $\beta_1=0$. Therefore, $Q$ is non-singular because $Q$ is square and all the columns are linearly independent. 

\subsection{Proof of Lemma \ref{lem:zerocase1}}\label{subapp:lemma5}
We first prove the ``if'' part. 
Suppose $h_{1+} = 0$, $h_{2+} = 0$ and $J_+ = O_{n\times n}$.  
Then LMI problem \eqref{LMI2} can be reformulated as 
\begin{align*}
&\left\{
\begin{array}{cl}
\displaystyle\inf
& \gamma\\
\mbox{subject to} & - \begin{pmatrix}
\He(\Lambda_+^T X) -\gamma f_+f_+^T & 0^T\\
0 & -\gamma
\end{pmatrix} \in\mathbb{S}_+^{n+1}, \\
& - \begin{pmatrix}
\He(\Omega_+^T Y) -\gamma g_+g_+^T & 0^T\\
0 & -\gamma
\end{pmatrix} \in\mathbb{S}_+^{n+1}, X, Y\in\mathbb{S}^n_+ 
\end{array}
\right.
\end{align*}
From this form it is very clear that
$(\gamma, X, Y) = (0, O_{n\times n}, O_{n\times n})$ is an optimal
solution achieving $\gamma^*=0$. 

We next prove the ``only if'' part. 
If $\gamma^*=0$, 
it follows from Lemma \ref{lem:exist} that \eqref{LMI2} 
has an optimal solution $(0, X, Y)$. 
Then $(X, Y)$ satisfies
\[
 - \begin{pmatrix}
\He(\Lambda_+^T X) & h_{1+}\\
h_{1+}^T & 0
\end{pmatrix} \in\mathbb{S}_+^{n+1}, - \begin{pmatrix}
\He(\Omega_+^T Y) & h_{2+}\\
h_{2+}^T & 0
\end{pmatrix} \in\mathbb{S}_+^{n+1}, \begin{pmatrix}
X & -J_{+}^T\\
-J_{+}& Y
\end{pmatrix}\in\mathbb{S}^{2n}_+. 
\]
From these matrix inequalities 
we readily obtain $h_{1+}=0$, $h_{2+} = 0$. In addition, we obtain the following equations.
\[
\left\{
\begin{array}{lcll}
\He((-\Lambda_+)^TX) &=& \tilde{X}, \tilde{X}\in\mathbb{S}^n_+, X\in\mathbb{S}^n_+,\\
\He((-\Omega_+)^TY) &=& \tilde{Y}, \tilde{Y}\in\mathbb{S}^n_+, Y\in\mathbb{S}^n_+,
\end{array}
\right. 
\]
These equations can be seen as the Lyapunov equations, and thus we have 
\[
X = -\int_{0}^{+\infty}\exp(-\Lambda_+^Tt)\tilde{X}\exp(-\Lambda_+t)\, dt \mbox{ and } Y = -\int_{0}^{+\infty}\exp(-\Omega_+^Tt)\tilde{Y}\exp(-\Omega_+t)\, dt. 
\]
Therefore $X=O_{n\times n}$ and $Y=O_{n\times n}$ because $\tilde{X}, \tilde{Y}\in\mathbb{S}^n_+$ and $X, Y\in\mathbb{S}^n_+$. Then $J_+$ must be the zero matrix. This completes the proof.  

\subsection{Proof of Lemma \ref{lem:dual}}\label{subapp:lemma6}
We define the Lagrange function $L$ for \eqref{LMI2} as follows: 
\begin{align*}
L(\gamma, \hat{X}, \hat{Y}, Z, V, W) &= \gamma + Z\bullet \begin{pmatrix}
\He(\Lambda^T\hat{X})-\gamma ff^T &h_1 \\
h_1^T & -\gamma
\end{pmatrix}- W\bullet \begin{pmatrix}
\hat{X} & -J^T\\
-J & \hat{Y}
\end{pmatrix} \\
&\quad + V\bullet \begin{pmatrix}
\He(\Omega^T\hat{Y})-\gamma gg^T &h_2 \\
h_2^T & -\gamma
\end{pmatrix}. 
\end{align*}
The Lagrange function $L$ can be reformulated as follows: 
\begin{align*}
L(\gamma, \hat{X}, \hat{Y}, Z, V, W) &= 2(Z_{21}\bullet h_1^T + V_{21}\bullet h_2^T + W_{21}\bullet J)\\
&\quad + \hat{X}\bullet (\He(\Lambda Z_{11}) - W_{11})+ \hat{Y}\bullet (\He(\Omega V_{11}) - W_{22})\\
&\quad + \gamma(1-f^TZ_{11}f-Z_{22}-g^TV_{11}g-V_{22}). 
\end{align*}
Then for any $(Z, V, W)\in\mathbb{S}^{n+1}_+\times\mathbb{S}^{n+1}_+\times\mathbb{S}^{2n}_+$, we consider the following Lagrange relaxation problem:
\begin{align*}
&\inf
\left\{
L(\gamma, \hat{X}, \hat{Y}, Z, V, W) : \gamma\in\mathbb{R}, \hat{X}, \hat{Y}\in\mathbb{S}^n
\right\} \\
=& \left\{
\begin{array}{cl}
2(Z_{21}\bullet h_1^T + V_{21}\bullet h_2^T + W_{21}\bullet J) & \mbox{ if } f^TZ_{11}f+Z_{22}+g^TV_{11}g+V_{22}=1, \\
& \quad W_{11} = \He(\Lambda Z_{11}), W_{22} = \He(\Omega V_{11}) , \\
-\infty & \mbox{ o.w.}
\end{array}
\right. 
\end{align*}
Hence we see that \eqref{dual2} is the dual of \eqref{LMI2}. On the other hand, 
it follows from Theorem \ref{thm:duality} that 
the duality gap between \eqref{LMI2} and \eqref{dual2} is zero, and \eqref{dual2} has an optimal solution. This completes the proof.  

\subsection{Proof of Lemma \ref{reduction}}\label{subapp:lemma9}
First of all, we prove \eqref{Z11} and the first equation in \eqref{W11}. Any feasible solution $(W_{ij}, Z_{ij}, V_{ij})$ satisfies
\begin{align*}
W_{11} &= \He\left(\begin{pmatrix}
\Lambda_0 & \\
 & \Lambda
\end{pmatrix}
\begin{pmatrix}
Z_{11} & Z_{21}^T\\
Z_{21} &Z_{22}
\end{pmatrix}
\right)=\begin{pmatrix}
\He(\Lambda_0 Z_{11}) &\Lambda_0Z_{21}^T+Z_{21}^T\Lambda^T \\
\Lambda Z_{21} + Z_{21}\Lambda_0^T & \He(\Lambda Z_{22})
\end{pmatrix}. 
\end{align*} 
For simplicity, we assume $m_1=2$. Other cases can be proved by a similar manner. Then $\Lambda_0$ and $Z_{11}$ can be written by 
\begin{align*}
\Lambda_0&=\begin{pmatrix}
F(\lambda_1) & \\
& F(\lambda_2) 
\end{pmatrix}, Z_{11} = \begin{pmatrix}
Z_{11}^1 & (Z_{11}^2)^T \\
Z_{11}^2 & Z_{11}^3 
\end{pmatrix} \mbox{ and } Z_{11}^1 = \begin{pmatrix}
a_1 & a_2\\
a_2 & a_3
\end{pmatrix}. 
\end{align*}
Hence we have 
\begin{align}
\label{W1}
\He(\Lambda_0 Z_{11})&=\begin{pmatrix}
\He(F(\lambda_1)Z_{11}^1) &* \\
F(\lambda_2) Z_{11}^2 + Z_{11}^2F(\lambda_1)^T & \He(F(\lambda_2) Z_{11}^3) 
\end{pmatrix}, \\
\label{W2}
\He(F(\lambda_1)Z_{11}^1) &=\begin{pmatrix}
2\Im(\lambda_1) a_2 & \Im(\lambda_1)(a_3-a_1)\\
 \Im(\lambda_1)(a_3-a_1) & -2\Im(\lambda_1) a_2
\end{pmatrix}. 
\end{align}
Since \eqref{W1} is positive semidefinite, the diagonal elements in
 \eqref{W2} must be nonnegative, and thus $a_2=0$ and $a_1=a_3$. This
 implies that \eqref{W2} is the zero matrix. Therefore we obtain
 $F(\lambda_2) Z_{11}^2 + Z_{11}^2F(\lambda_1)^T = O_{2\times 2}$ from the positive semidefiniteness of
 \eqref{W1}. The first equation can be regarded as the Sylvester
 equation on $Z_{11}^2$. As we have assumed that invariant zeros on the imaginary axis are distinct from each other, we have $\lambda_1\neq\lambda_2$, and thus $F(\lambda_1)$ has no common eigenvalues of
 $-F(\lambda_2)^T$. Thus $Z_{11}^2$ the zero matrix, see \cite[Theorem 2.4.4.1]{Horn12} for the existence and uniqueness of the solution in the Sylvester equation.
 Similarly, we obtain $Z_{11}^3$ is diagonal with nonnegative elements.

Also since $\He(\Lambda_0 Z_{11}) = O_{4\times 4}$, the matrix $\Lambda Z_{21} + Z_{21}\Lambda_0^T$ is zero. This equation is also the Sylvester equation. Since $\Lambda$ and $-\Lambda_0^T$ have no common eigenvalues, the unique solution of the Sylvester equation is $Z_{21} = O_{(n-2m_1)\times 2m_1}$.  
 Hence we obtain \eqref{Z11}. The first equation of \eqref{W11} is proved from the positive semidefiniteness of $W$ and $\He(\Lambda_0 Z_{11}) = O$. 
By following similar lines, the rest equations \eqref{V11} and those in 
\eqref{W11} can be proved.  

\subsection{Proof of Lemma \ref{lem:dualdual}}\label{subapp:lemma10}
We prove by s similar manner to Lemma \ref{lem:dual} that the Lagrange dual of \eqref{LMI_img3} is \eqref{dual_img3}. We define the Lagrange function $L$ for \eqref{LMI_img3}: 
{\small 
\begin{align*}
&L(\gamma, \hat{X}, \hat{Y}, U^X_{ij}, U^Y_{ij}, Z, V, \hat{W}, z, v) \\
=& \gamma + Z\bullet \begin{pmatrix}
U_{11}^X -\gamma f_0f_0^T &(U_{21}^X)^T-\gamma f_0f^T &h_{10} \\
U_{21}^X-\gamma ff^T_0&\He(\Lambda^T \hat{X})-\gamma ff^T &h_{1} \\
h_{10}^T & h_{1}^T& -\gamma
\end{pmatrix} \\
&\quad + V\bullet \begin{pmatrix}
U_{11}^Y -\gamma g_0g_0^T &(U_{21}^Y)^T-\gamma g_0g^T &h_{20} \\
U_{21}^Y-\gamma gg^T_0&\He(\Omega^T \hat{Y})-\gamma gg^T &h_{2} \\
h_{20}^T & h_{2}^T& -\gamma
\end{pmatrix} -\hat{W}\bullet \begin{pmatrix}
\hat{X} & -J^T\\
-J & \hat{Y}
\end{pmatrix} \\ 
&\quad + \sum_{j=1}^{m_1}z_j((U_{11}^X)_{2j-1, 2j-1} + (U_{11}^X)_{2j, 2j}) + \sum_{j=1}^{m_2}v_j((U_{11}^Y)_{2j-1, 2j-1} + (U_{11}^Y)_{2j, 2j})\\
=&2\left(h_{10}^T\bullet Z_{31} + h_{1}^T\bullet Z_{32} +
 h_{20}^T\bullet V_{31} + h_{2}^T\bullet V_{32} + J\bullet \hat{W}_{21}
\right) \\
&\quad + \hat{X}\bullet (\He(\Lambda Z_{22})-\hat{W}_{11}) + \hat{Y} \bullet (\He(\Omega V_{22})-\hat{W}_{22})\\
&\quad +\gamma \left(1-\begin{pmatrix}
f_0\\
f
\end{pmatrix}^T\begin{pmatrix}
Z_{11} & *\\
Z_{21} & Z_{22}
\end{pmatrix}\begin{pmatrix}
f_0\\
f
\end{pmatrix} -Z_{33}- \begin{pmatrix}
g_0\\
g
\end{pmatrix}^T\begin{pmatrix}
V_{11} & *\\
V_{21} & V_{22}
\end{pmatrix}\begin{pmatrix}
g_0\\
g
\end{pmatrix}-V_{33}\right)\\
&\quad + \sum_{i \neq j}(U_{11}^X)_{ij}(Z_{11})_{ij} + \sum_{i \neq j}(U_{11}^Y)_{ij}(V_{11})_{ij} + \sum_{j=1}^{m_1}(U_{11}^X)_{2j-1, 2j-1}((Z_{11})_{2j-1, 2j-1}+z_{j}) \\
&\quad + \sum_{j=1}^{m_1}(U_{11}^X)_{2j, 2j}((Z_{11})_{2j, 2j}+z_{j})+ \sum_{j=1}^{m_2}(U_{11}^Y)_{2j-1, 2j-1}((V_{11})_{2j-1, 2j-1}+v_{j})\\
&\quad+ \sum_{j=1}^{m_2}(U_{11}^Y)_{2j, 2j}((V_{11})_{2j, 2j}+v_{j}) + 2(U_{21}^X\bullet Z_{21} + U_{21}^Y\bullet V_{21}). 
\end{align*}
}
Then for any $(Z, V, \hat{W}, z, v)\in\mathbb{S}^{n+1}_+\times\mathbb{S}^{n+1}_+\times\mathbb{S}^{n_0}_+\times\mathbb{R}^{m_1}\times\mathbb{R}^{m_2}$, the Lagrange relaxation problem can be formulated as 
\begin{align*}
&\inf
\left\{
L(\gamma, \hat{X}, \hat{Y}, U^X_{ij}, U^Y_{ij}, Z, V, \hat{W}, z, v) 
: \begin{array}{l}
\gamma\in\mathbb{R}, \hat{X}\in\mathbb{S}^{n-2m_1}, \hat{Y}\in\mathbb{S}^{n-2m_2}, \\
U^X_{11}\in\mathbb{S}^{2m_1}, U^Y_{11}\in\mathbb{S}^{2m_2}, \\
U^X_{21}\in\mathbb{R}^{(n-2m_1)\times 2m_1}, \\
U^Y_{21}\in\mathbb{R}^{(n-2m_2)\times 2m_2}
\end{array}
\right\}\\
=& \left\{
\begin{array}{cl}
2\left(h_{10}^T\bullet Z_{31} + h_{1}^T\bullet Z_{32} +
 h_{20}^T\bullet V_{31} \right. \\
 \left. + h_{2}^T\bullet V_{32} + J\bullet \hat{W}_{21}
\right)& \mbox{if } (Z, V, \hat{W}, z, v)\in\mathcal{F} \\
-\infty & \mbox{o.w.}. 
\end{array}
\right.
\end{align*}
As it is easy to obtain the dual \eqref{dual_img3} from
 $\mathcal{F}$, we omit the detail. The zero duality gap and the existence of an optimal solution of \eqref{dual_img3} can be proved similarly to \ref{subapp:proof3}.  

\subsection{Proof of Lemma \ref{lem:dual_sing}}\label{subapp:lemma14}
We define the Lagrange function $L$ for \eqref{LMI_sing2}: 
{\small 
\begin{align*}
&L(\gamma, X, Y, Z, V, W) \\
=& \gamma + Z\bullet \begin{pmatrix}
\He(\Lambda^T S^TXS) -\gamma ff^T &*&* \\
\hat{P}_1^TXS +P_1^TXS\Lambda-\gamma p_1f^T&\He(\hat{P}_1^TXP_1)-\gamma p_1p_1^T&* \\
h_1^T & \tilde{h}_1^T&-\gamma
\end{pmatrix} \\
&\quad + V\bullet \begin{pmatrix}
\He(\Omega^T T^TYT) -\gamma gg^T &*&* \\
\hat{P}_2^TYT +P_2^TYT\Omega-\gamma p_2g^T&\He(\hat{N}_2^TYP_2)-\gamma p_2p_2^T&* \\
h_2^T & \tilde{h}_2^T&-\gamma
\end{pmatrix} \\
&\quad - W \bullet \begin{pmatrix}
X & -I_n\\
-I_n & Y
\end{pmatrix}\\
=& \begin{pmatrix}
& I_n\\
I_n & 
\end{pmatrix}\bullet W + \begin{pmatrix}
& &h_1\\
& &\tilde{h}_1\\
h_1^T & \tilde{h}_1^T&  
\end{pmatrix}\bullet Z+ \begin{pmatrix}
& &h_2\\
& &\tilde{h}_2\\
h_2^T & \tilde{h}_2^T&  
\end{pmatrix}\bullet V\\
&\quad + \gamma\left(
1 - \begin{pmatrix}
ff^T & fp_1^T & \\
p_1f^T & p_1p_1^T & \\
& & 1
\end{pmatrix}\bullet Z - \begin{pmatrix}
gg^T & gp_2^T & \\
p_2g^T & p_2p_2^T & \\
& & 1
\end{pmatrix}\bullet V
\right)\\
&\quad + X\bullet \left(
W_{11} - \He\left(
\begin{pmatrix}
S\Lambda & \hat{P}_1
\end{pmatrix}\begin{pmatrix}
Z_{11} & Z_{21}^T\\
Z_{21} & Z_{22}
\end{pmatrix}\begin{pmatrix}
S^T\\
P_1^T
\end{pmatrix}
\right)
\right) \\
&\quad + Y\bullet \left(
W_{22} - \He\left(
\begin{pmatrix}
T\Omega & \hat{P}_2
\end{pmatrix}\begin{pmatrix}
V_{11} & V_{21}^T\\
V_{21} & V_{22}
\end{pmatrix}\begin{pmatrix}
T^T\\
P_2^T
\end{pmatrix}
\right)
\right) 
\end{align*}
}
By applying a similar discussion in Lemma \ref{lem:dual}, we can obtain the Lagrange relaxation problem for $(Z, V, W)\in\mathbb{S}^{n+1}_+\times\mathbb{S}^{n+1}_+\times\mathbb{S}^{2n}_+$ and the Lagrange dual problem. We see that the Lagrange dual is equivalent to \eqref{dual_sing2}. The zero duality gap between \eqref{LMI_sing2} and \eqref{dual_sing2} follows from Theorem \ref{thm:duality}. In fact, the proof is independent in the computation of the perpendicular matrices in \eqref{LMI}. In addition, it follows from Assumption \ref{A1} and Theorem \ref{thm:duality} that \eqref{dual_sing2} has an optimal solution.

\subsection{Proof of Lemma \ref{zeroDual}}\label{subapp:lemma15}
We prove only the statement on $Z_{ij}$ because we can also prove the statement on $V_{ij}$ in a similar manner.  
Since we use equations \eqref{StAb}, \eqref{bAN1} and \eqref{bAhN1} in this proof, we rewrite here.
\begin{align}
\label{rStAb} S^TA^r b_2 &= 0 \ (r=0, \ldots, r_1-1) \tag{63}\\
\label{rbAN1} (P_1^TA^rb_2)_j &= 0 \ (r=0, \ldots, r_1-1, j=1, \ldots, r_1-r) \tag{65}\\
\label{rbAhN1} (\hat{P}_1^TA^rb_2)_j &= 0 \ (r=0, \ldots, r_1-2, j=1, \ldots, r_1-r-1). \tag{68}
\end{align}
We focus on the following constraint of \eqref{dual_sing2}.
\[
W_{11} = \He(S\Lambda(Z_{11}S^T+ Z_{21}^TP_1^T) + \hat{P}_1(Z_{21}S^T +Z_{22}P_1^T)). 
\]
In addition, we use the positive semidefiniteness of $Z_{22}$ and $W_{11}$, and the following well-known facts.
\begin{fact}\label{wfact}
If there exists nonzero $v\in\mathbb{R}^n$ such that $v^TW_{11}v = 0$, then $W_{11}v = 0$. 
\end{fact}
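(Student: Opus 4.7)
The plan is to use the standard fact that $W_{11}$ is positive semidefinite (as established in the surrounding context by being the top-left block of the PSD matrix $\left(\begin{smallmatrix} W_{11} & W_{21}^T \\ W_{21} & W_{22}\end{smallmatrix}\right)\in\mathbb{S}^{2n}_+$). The key idea is that a PSD matrix admits a symmetric square root, and the quadratic form $v^T W_{11} v$ is then the squared Euclidean norm of the image of $v$ under that root; vanishing of the norm forces the image itself to vanish.

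More concretely, the first step is to invoke the spectral theorem to write $W_{11} = Q\,\Sigma\, Q^T$ with $Q$ orthogonal and $\Sigma=\mathrm{diag}(\sigma_1,\ldots,\sigma_n)$, $\sigma_i\ge 0$. Setting $M:=\Sigma^{1/2}Q^T$, we get $W_{11}=M^TM$. The second step is the computation
\[
0 = v^T W_{11} v = v^T M^T M v = \|Mv\|_2^2,
\]
which forces $Mv = 0$. The third step is then immediate:
\[
W_{11} v = M^T(Mv) = M^T \cdot 0 = 0,
\]
which is the desired conclusion.

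There is no real obstacle here; the statement is a textbook fact about positive semidefinite matrices, included simply as a convenient named reference for the argument in Lemma \ref{zeroDual}. An alternative derivation, avoiding spectral decomposition, uses the Cauchy--Schwarz inequality for PSD forms: $|v^T W_{11} w|^2 \le (v^T W_{11} v)(w^T W_{11} w)$ for every $w\in\mathbb{R}^n$; taking $w = W_{11} v$ and using $v^T W_{11} v = 0$ yields $\|W_{11} v\|_2^4 \le 0$, hence $W_{11} v = 0$. Either route gives a two-line proof, and I would prefer the square-root version for transparency.
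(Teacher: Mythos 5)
Your proof is correct: since $W_{11}$ is the leading principal block of a positive semidefinite matrix it is itself positive semidefinite, and both of your arguments (the factorization $W_{11}=M^TM$ and the Cauchy--Schwarz variant) validly yield $W_{11}v=0$ from $v^TW_{11}v=0$. The paper itself states this as a well-known fact and supplies no proof, so there is nothing to compare against; your square-root argument is the standard one and fills the gap appropriately.
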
 
 \begin{fact}\label{zfact}
Let $k \in \{1, \ldots, r_1\}$. If $(Z_{22})_{kk}=0$, then $(Z_{22})_{kj} = (Z_{22})_{jk}= 0$ for all $j=1, \ldots, r_1$. 
\end{fact}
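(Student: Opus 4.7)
The plan is to reduce the claim to the standard zero‑diagonal property of positive semidefinite matrices. Since $Z\in\mathbb{S}^{n+1}_+$ by assumption, every principal submatrix of $Z$ is positive semidefinite; in particular the block $Z_{22}\in\mathbb{R}^{r_1\times r_1}$ is symmetric (inherited from symmetry of $Z$) and positive semidefinite.

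The key step: fix $k$ with $(Z_{22})_{kk}=0$ and an arbitrary $j\in\{1,\ldots,r_1\}$, and examine the $2\times 2$ principal submatrix
\[
\begin{pmatrix}
(Z_{22})_{kk} & (Z_{22})_{kj}\\
(Z_{22})_{jk} & (Z_{22})_{jj}
\end{pmatrix}
=
\begin{pmatrix}
0 & (Z_{22})_{kj}\\
(Z_{22})_{kj} & (Z_{22})_{jj}
\end{pmatrix}.
\]
As a principal submatrix of a positive semidefinite matrix, this $2\times 2$ matrix is itself positive semidefinite, so its determinant $-(Z_{22})_{kj}^2$ is nonnegative, which forces $(Z_{22})_{kj}=0$. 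Symmetry of $Z_{22}$ then gives $(Z_{22})_{jk}=0$. Alternatively, one may invoke the analog of Fact \ref{wfact} applied to $Z_{22}$: taking $v=e_k$ yields $v^T Z_{22} v = (Z_{22})_{kk}=0$, whence $Z_{22}e_k=0$, so the entire $k$‑th column (and by symmetry the $k$‑th row) vanishes.

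No obstacle is anticipated, since this is the classical zero‑diagonal implication for positive semidefinite matrices; the only point worth flagging explicitly is that the semidefiniteness of $Z_{22}$ is inherited from that of the ambient matrix $Z$.
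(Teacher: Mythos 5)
Your proof is correct: the paper states this as a ``well-known fact'' without proof, relying precisely on the classical property that a positive semidefinite matrix with a vanishing diagonal entry has the corresponding row and column identically zero. Your $2\times 2$ principal-minor argument (together with the observation that $Z_{22}$ inherits positive semidefiniteness as a principal submatrix of $Z\in\mathbb{S}^{n+1}_+$) is exactly the standard justification the paper leaves implicit.
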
 

First of all, it follows from \eqref{StAb} and \eqref{bAN1} that we have $S^Tb_2 = 0$ and $P_1^Tb_2=0$, and thus $b_2^TW_{11}b_2 = 0$. $W_{11}b_2 = 0$ follows from Fact \ref{wfact}. 

If $r_1=1$, we have $P_1=0$ and $\hat{P}_1 = c$, and thus $W_{11} = \He((S\Lambda Z_{11} + c_1Z_{21}^T)S^T)$. Multiplying $b_2^T$ from the left side, we obtain $b_2^TW_{11} = (b_2^Tc_1)Z_{21}^TS^T = 0$. This implies $(Z_{21})_{r_1, j} = 0$ for all $j=1, \ldots, n-r_1$ because $S$ is of full column rank and $c_1^Tb_2 \neq 0$. This is the proof of the case $r_1=1$. 

Secondly, we consider the case $r_1>1$. We prove the statement on $Z_{22}$ by induction. For this, we first prove $(Z_{22})_{r_1 r} = (Z_{22})_{r r_1} = 0$ for all $r=1, \ldots, r_1$. \eqref{bAN1} and \eqref{bAhN1}, respectively are equal to 
\begin{align}
\label{eq6_1}
b_2^TA^TP_1 &= b_2^T\hat{P}_1 = \bordermatrix{
&(r_1 - 1) & 1\cr
&0 & b_2^T(A^T)^{r_1-1}c_1
}  = (b_2^T(A^T)^{r_1-1}c_1)e_{r_1}^T, 
\end{align}
where $e_{r_1}\in\mathbb{R}^{r_1}$ is the $r_1$th unit vector. It follows from $W_{11}b_2=0$, $S^Tb_2=0$, $P_1^Tb_2 = 0$ and \eqref{eq6_1} that we have 
$
W_{11}b_2 = (SZ_{21}^T+ P_1Z_{22})(b_2^T(A^T)^{r_1-1}c_1)e_{r_1} = 0
$. Multiplying $(Ab_2)^T$ into the left side of the above equation, we obtain 
 $(b_2^T(A^T)^{r_1-1}c_1)^2 e_{r_1}^TZ_{22}e_{r_1}=0$ from \eqref{eq6_1}. 
 Hence it follows from Fact \ref{zfact} that $(Z_{22})_{r_1 r} = (Z_{22})_{r r_1} = 0$ for all $r=1, \ldots, r_1$. Morevoer, $(Z_{21})_{r_1 r} = 0$ for all $r=1, \ldots, n-r_1$ and $(Z_{32})_{r_1} = 0$ because we have $b_2^T(A^T)^{r_1-1}c_1 \neq 0$ and the positive semidefiniteness of $Z$.  

Thirdly, we fix $1\le r\le r_1-2$ arbitrary, and assume that $Z_{21}$ and $Z_{22}$ form   
\[
Z_{21}^T = \bordermatrix{
&(r_1 - r) & r\cr
&\tilde{Z}_{21}^T & O_{(n-r_1) \times r}
} \mbox{ and } Z_{22} = \bordermatrix{
&(r_1 - r) & r\cr
(r_1 - r)&\tilde{Z}_{22} & O_{(r_1-r) \times r}\cr
r&O_{r\times(r_1-r) } & O_{r\times r}
}. 
\]
Then it follows from $S^TA^{r}b_2=0$ from \eqref{StAb} that  we have
\[
(A^{r}b_2)^T\He(S\Lambda(Z_{11}S^T + Z_{21}^TP_1^T)+\hat{P}_1Z_{21}S^T)(A^{r}b_2) = 0. 
\]
In addition, we see from \eqref{bAN1} that the first $(r_1-r)$ elements of the vector $P_1^TA^{r}b_2$ are zero. Hence, it follows from the structure of $Z_{22}$ that we have 
\begin{align}\label{eq6d}
(A^{r}b_2)^TW_{11}(A^{r}b_2) &= \He((A^{r}b_2)^T\hat{P}_1Z_{22} P_1^T(A^{r}b_2)) =0. 
\end{align}
$W_{11}(A^{r}b_2)=0$ follows from Fact \ref{wfact}. \eqref{rStAb}, \eqref{rbAN1} and this equation imply that 
\begin{align}\label{eq6e}
0 = W_{11}(A^rb_2) = (SZ_{21}^T + P_1Z_{22})\hat{P}_1^T(A^{r}b_2).
\end{align}
 In addition, we have from \eqref{bAN1} and \eqref{bAhN1}, 
\begin{align*}
\begin{pmatrix}
b_2^T(A^{r+1})^TP_1\\
b_2^T(A^{r})^T\hat{P}_1
\end{pmatrix}& = \bordermatrix{
&(r_1 -r-1)&1 &(r+1) \cr
&0& c_1^TA^{r_1-1}b_2&* \cr
&0& c_1^TA^{r_1-1}b_2&* \cr
}. 
\end{align*}
Multiplying $(A^{r+1}b_2)^T$ from the left side of \eqref{eq6e}, we obtain 
\begin{align*}
 0 &= b_2^T(A^T)^{r+1}P_1Z_{22}\hat{P}_1^T(A^{r}b_2) 
%
=(Z_{22})_{(r_1-r), (r_1-r)} (c_1^TA^{r_1-1}b_2)^2. 
\end{align*}
Therefore from Fact \ref{zfact}, we obtain $(Z_{22})_{(r_1-r), j} = (Z_{22})_{j, (r_1-r)} = 0$ for all $j=1, \ldots, r_1-r$. This means that $(Z_{22})_{ij} = 0$ for all $1\le i, j\le r_1$ except for $(i, j) = (1, 1)$, $(Z_{21})_{ij} = 0$ for all $2\le i\le r_1, 1\le j\le n-r_1$ and $(Z_{32})_{r} = 0$ for $r=2, \ldots, r_1$. 

Finally, we prove that $(Z_{21})_{1j} = 0$ for all $j=1, \ldots, n-r_1$. As \eqref{StAb} and \eqref{bAN1} hold for $r=r_1-1$, we have $(A^{r_1-1}b_2)^TW_{11}(A^{r_1-1}b_2) = 0$, and thus $W_{11}(A^{r_1-1}b_2) = 0$. We have already known from the induction that 
\[
Z_{21}^T = \begin{pmatrix}
\tilde{z} & O_{ (n-r_1)\times(r_1-1)}
\end{pmatrix} \mbox{ and } Z_{22} = \begin{pmatrix}
z & 0\\
0 & O_{(r_1-1)\times (r_1-1)}
\end{pmatrix}
\]
for some $\tilde{z}\in\mathbb{R}^{n-r_1}$ and $z\in\mathbb{R}$. We substitute them to $W_{11}(A^{r_1-1}b_2) = 0$. Then as we have 
\begin{align*}
Z_{21}^TP_1^T &= \begin{pmatrix}
\tilde{z} & O_{(n-r_1)\times (r_1-1)}
\end{pmatrix}\begin{pmatrix}
0\\
*
\end{pmatrix} = O_{(n-r_1)\times n}, \\
Z_{21}^T\hat{P}_1^T &= \begin{pmatrix}
\tilde{z} & O_{(n-r_1)\times (r_1-1)}
\end{pmatrix}\begin{pmatrix}
c_1^T\\
*
\end{pmatrix} = \tilde{z}c_1^T, \\
Z_{22}P_1^T &= \begin{pmatrix}
z & 0\\
0 & O_{(r_1-1)\times (r_1-1)}
\end{pmatrix}\begin{pmatrix}
0\\
*
\end{pmatrix} = O_{r_1\times n}, 
\end{align*}
we obtain
\begin{align*}
W_{11}(A^{r_1-1}b_2) &=(S\Lambda Z_{21}^TP_1^T + SZ_{21}^T\hat{P}_1^T + \hat{P}_1Z_{22}P_1^T + P_1Z_{22}\hat{P}_1^T)(A^{r_1-1}b_2) \\
&=(c_1^TA^{r_1-1}b_2)S\tilde{z}^T = 0.
\end{align*}
 As $S$ is of full column rank and $c_1^TA^{r_1-1}b_2\neq 0$, we have $\tilde{z} = 0$. Therefore $Z_{21} = O$. 

\end{document}